\theoremstyle{plain}
\newtheorem{theorem}{Theorem}[section]
\newtheorem{proposition}[theorem]{Proposition}
\newtheorem*{theorem*}{Theorem}
\newtheorem*{corollary*}{Corollary}
\newtheorem{corollary}[theorem]{Corollary}
\newtheorem*{lemma*}{Lemma}
\newtheorem{lemma}[theorem]{Lemma}
\theoremstyle{remark}
\newtheorem{remark}[theorem]{Remark}
\newtheorem*{case*}{Case}
\theoremstyle{definition}
\newtheorem{notation}[theorem]{Notation}
\newtheorem{definition}[theorem]{Definition}
\newcommand{\Z}{\mathbb{Z}}
\newcommand{\aut}{\mathrm{Aut}}
\newcommand{\out}{\mathrm{Out}}
\newcommand{\C}{\mathbb{C}}
\newcommand{\R}{\mathbb{R}}
\newcommand{\F}{\mathbb{F}}
\newcommand{\E}{\mathcal{E}}
\newcommand{\Irr}{\mathrm{Irr}}
\newcommand{\ind}{\mathrm{Ind}}
\newcommand{\irr}{\mathrm{Irr}}
\newcommand{\cusp}{\mathrm{Irr}_{\mathrm{cusp}}}
\newcommand{\Bl}{\mathrm{Bl}}
\newcommand{\res}{\mathrm{Res}}
\newcommand{\wh}[1]{\widehat{#1}}
\newcommand{\Aut}{\mathrm{Aut}}
\newcommand{\la}{\lambda}
\newcommand{\si}{\sigma}
\newcommand{\al}{\alpha}
\newcommand{\ov}{\overline}
\newcommand{\bt}[1]{\mathrm{#1}}
\newcommand{\bb}[1]{\mathbb{#1}}
\newcommand{\wt}[1]{\widetilde{#1}}
\newcommand{\bG}[1]{\mathbf{#1}}
\newcommand{\bg}[1]{\mathbf{#1}}
\newcommand{\xx}{\bg{x}}
\newcommand{\wrt}{{with respect to\ }}
\newcommand{\Sym}{{\mathcal{S}}}
\newcommand{\NNN}{\operatorname{N}}
\newcommand{\Cent}{\operatorname{C}}
\newcommand{\Zent}{\operatorname{Z}}
\newcommand{\Inn}{\operatorname{Inn}}
\newcommand{\uad}{{\underline{a_d}}}
\newlist{asslist}{enumerate}{1} 
\setlist[asslist]{label=(\roman*), 
}
\newlist{thmlist}{enumerate}{1} 
\setlist[thmlist]{label=(\alph*), ref=(\alph*)
}
\newcommand{\calG}{\mathcal G}\newcommand{\cD}{\mathcal D}
\newcommand{\calM}{\mathcal M}
\newcommand{\calB}{\mathcal B}
\newcommand{\onjd}{\overline n_j^{(d)}}
\def\spann<#1>{\left\langle#1\right\rangle}
\def\Set#1{\Set@h#1@}
\def\Lset#1{\Lset@h#1@}
\def\Set@h#1|#2@{\left\{\left.#1\vphantom{#2}\hskip.1em\,\right|\,\relax #2\right\}}
\def\Lset@h#1@{\left\{#1\right\}}
\def\CALC#1{\CALC@h#1@}
\def\CALC@h#1|#2@{\calC^{#1}(#2)}
\def\CALCrad#1{\CALCrad@h#1@}
\def\CALCrad@h#1|#2@{\calC_\radic^{#1}(#2)}
\def\CALCNC#1{\CALCNC@h#1@}
\def\CALCNC@h#1|#2@{\calC_{\radic,nc}^{#1}(#2)}
\def\restr#1|#2{\left.#1\right\rceil_{#2}}
\def\spann<#1>{\left\langle#1\right\rangle}
\def\Spann<#1>{\Spann@h#1@}
\def\Spann@h#1|#2@{\left\langle\left.#1\vphantom{#2}\hskip.1em\right.\mid\relax #2 \right\rangle}
\def\Set#1{\Set@h#1@}
\def\Set@h#1|#2@{\left\{\left.#1\vphantom{#2}\hskip.1em\,\right.
	\mid\relax #2\right\}}
\def\set#1{\set@h#1@}
\def\set@h#1@{\left\{#1\right\}}
\def\spann<#1>{\left\langle#1\right\rangle}
\title{On the Inductive Alperin-McKay Conditions in the Maximally Split Case}
\author{
Marc Cabanes \\ \small\textit{Institut de Math{\'e}matiques de Jussieu - PRG} \\ \small\textit{CNRS} \\ \small \textit{ 8 Place Aur\'elie Nemours,
	75205 Paris Cedex 13, France} \\ \small \textit{cabanes@math.jussieu.fr}
\and
A. A. Schaeffer Fry\\ \small\textit{Department of Mathematical and Computer Sciences}\\\small\textit{Metropolitan State University of Denver}\\ \small\textit{Denver, CO 80217, USA}\\ \small\textit{aschaef6@msudenver.edu}
\and
Britta Sp{\"a}th\\ \small\textit{Universit{\"a}t Wuppertal}\\\small\textit{School of Mathematics and Natural Sciences}\\\small\textit{ Gau\ss str. 20, 42119 Wuppertal, Germany}\\ \small\textit{bspaeth@uni-wuppertal.de}
}
\date{\today}
\begin{document}
\maketitle

\begin{abstract}

The Alperin-McKay conjecture relates height zero characters of an $\ell$-block with the ones of its Brauer correspondent. This conjecture has been reduced to the so-called inductive Alperin-McKay conditions about quasi-simple groups by the third author. Those conditions are still open for groups of Lie type. The present paper describes characters of height zero in $\ell$-blocks of groups of Lie type over a field with $q$ elements when $\ell$ divides $q-1$. We also give information about $\ell$-blocks and Brauer correspondents.  As an application we show that quasi-simple groups of type $\sf C$ over $\F_q$ satisfy the inductive Alperin-McKay conditions for primes $\ell\geq 5$ and dividing $q-1$. Some methods to that end are adapted from \cite{MalleSpathMcKay2}.
\vspace{0.25cm}

\noindent \textit{Mathematics Classification Number:} 20C15, 20C33

\noindent \textit{Keywords:} local-global conjectures, characters, McKay conjecture, Alperin-McKay conjecture, finite simple groups, Lie type, Harish-Chandra series, Blocks, Height-Zero Characters

\end{abstract}

\section{Introduction} 

The well-known McKay conjecture from 1972 posits that for a finite group $G$ and a prime $\ell$ dividing $|G|$, there should be a bijection between the irreducible characters of degree prime to $\ell$ of $G$ and those of $\NNN_G(P)$ for a Sylow $\ell$-subgroup $P$ of $G$.  The blockwise version of the McKay conjecture, known as the Alperin-McKay Conjecture, states that the number of height-zero characters of an $\ell$-block $B$ of $G$ with defect group $D$ should be the same as the number of height-zero characters of the Brauer correspondent of $B$ in $\NNN_G(D)$.

Reduction theorems for the McKay and Alperin-McKay conjectures are proven in \cite{IsaacsMalleNavarroMcKayreduction} and \cite{spathAMreduction}, respectively.  In particular, in each case it is shown that to prove the conjecture, it suffices to prove certain ``inductive" conditions for all finite non-abelian simple groups.  From \cite{spathAMreduction} and \cite{Denoncin_AM}, we know that the alternating groups satisfy the inductive Alperin-McKay conditions and that the simple groups of Lie type satisfy the inductive Alperin-McKay conditions when $\ell$ is the defining characteristic. The situation that a simple group has abelian Sylow subgroups was considered in \cite{Malle14}, and certain low-rank cases have been settled in \cite{Malle14, SchaefferFryGood}.  Further, \cite{cabanesspath15, broughspath} consider the case of groups of Lie type ${\sf A}$.

In the present paper, we describe the height-zero characters in blocks of finite groups of Lie type and of an appropriate subgroup containing the normalizer of a defect group for certain good primes.  We prove sufficient conditions for a group of Lie type $G$ in this situation to satisfy the inductive Alperin-McKay conditions and, as an application, further prove that if $G$ is of type $\sf C$, then these conditions hold.  That is, we prove:

\begin{theorem}\label{thm:iAMtypeC}
The simple groups {\rm PSp}$_{2n}(q)$ with $q$ odd and $n\geq 2$ satisfy the inductive Alperin-McKay conditions from \cite[7.3]{spathAMreduction} for primes $\ell\geq 5$ dividing $(q-1)$.
\end{theorem}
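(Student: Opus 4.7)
The approach is to specialize the general machinery for the maximally split case developed earlier in the paper to the type $\sf C$ situation, and then verify the three ingredients required by the inductive Alperin-McKay conditions of \cite[7.3]{spathAMreduction}: the existence of an $\Aut(G)_B$-equivariant bijection on height-zero characters, the matching of stabilizers, and the strong character-triple isomorphism (the cohomological condition). Throughout, let $G=\Sp_{2n}(q)$ be the universal covering group of $\mathrm{PSp}_{2n}(q)$ (for $n\geq 2$ and $q$ odd, this is the Schur cover), and let $\wt G=\mathrm{CSp}_{2n}(q)$ be a regular embedding. Since $\ell\geq 5$ divides $q-1$, the prime $\ell$ is good for $\sf C_n$ and coprime to $|\Zent(G)|=2$, so Jordan decomposition behaves well and the whole maximally split framework of the paper applies.

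\textbf{Step 1: Blocks, defect groups, and the subgroup $M$.} I would first use the paper's description of $\ell$-blocks of $G$ in the maximally split case to identify, for each $\ell$-block $B$ of $G$, a defect group $D$ sitting inside a maximally split torus, together with the centralizer-type Levi subgroup $L=\Cent_{\bG G}(D)^F$ of the form $\GL_{a_1}(q)\times\cdots\times\GL_{a_k}(q)\times\Sp_{2m}(q)$ (with $\sum a_i+m=n$). I would then take $M$ to be an $N_G(D)$-containing subgroup built from $L$ and the Weyl-type relative group $N_G(L)/L$ that realizes the Harish-Chandra series structure on the Brauer correspondent $b=B(M\mid D)$, exactly as in the general setup of the preceding sections.

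\textbf{Step 2: Parametrization and construction of the bijection.} Using the paper's description of $\Irr_0(B)$ and $\Irr_0(b)$ via Harish-Chandra series, combined with Lusztig's Jordan decomposition of characters for $G$ and its Levi subgroups (with $\ell$-regular semisimple labels in $G^*$-conjugacy classes), I would produce the bijection $\Omega_B:\Irr_0(B)\to\Irr_0(b)$ by sending the Harish-Chandra datum $(L,\lambda)$ attached to $\chi$ to a height-zero character of $M$ whose Harish-Chandra source is the same cuspidal pair, and whose unipotent piece matches under the combinatorial parametrization of unipotent characters of type $\sf C$ via bipartitions. This is the natural analogue of the bijection used in \cite{MalleSpathMcKay2} for the principal block.

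\textbf{Step 3: Equivariance.} I would check that $\Omega_B$ commutes with the action of $\Aut(G)_B$. Field automorphisms act through the Frobenius on both Harish-Chandra data and unipotent labels, and since the bijection is built entirely from these combinatorial data, equivariance is automatic. The only diagonal outer automorphism (coming from $\wt G/G\,\Zent(\wt G)\cong\Z/2$) acts by tensoring with the unique nontrivial linear character of $\wt G/G[\wt G,\wt G]$; as in the type $\sf A$ and type $\sf D$ analyses of \cite{cabanesspath15, MalleSpathMcKay2}, its effect on Harish-Chandra labels is explicit and commutes with the labelling on the $M$-side, because the same regular embedding $G\subset \wt G$ induces a compatible embedding $M\subset \wt M$. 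Here the condition $\ell\geq 5$ is used to exclude the small exceptional behaviour and to ensure no interaction with the graph-automorphism issues of $\sf B_2=\sf C_2$ in characteristic $2$.

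\textbf{Step 4: The cohomological (character-triple) condition.} For each $\chi\in\Irr_0(B)$ I need a strong isomorphism of the triples $(\Aut(G)_{B,\chi},G,\chi)$ and $(\Aut(G)_{B,\chi},M,\Omega_B(\chi))$. Following the strategy from \cite{MalleSpathMcKay2}, I would verify this by extending both $\chi$ and $\Omega_B(\chi)$ to the corresponding stabilizers in $\wt G\rtimes \langle F_0\rangle$ (respectively in its $M$-analogue), using Harish-Chandra induction to transport an extension on the cuspidal side through the bijection, and checking that the resulting $2$-cocycles on the relative Weyl group are cohomologous. The fact that $\wt G/G$ is cyclic of order $2$ (rather than more general as in $\mathsf A$ or $\mathsf D$) makes the $2$-cocycle argument especially clean here.

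\textbf{Main obstacle.} I expect the real work to be Step 4, namely the simultaneous extension problem: one must produce consistent extensions of both $\chi$ and $\Omega_B(\chi)$ to the full stabilizers so that the induced factor sets agree. This is typically attacked by first trivializing the cocycle on the abelian diagonal-plus-field part using regular embeddings and Shintani-type arguments, and then controlling the relative Weyl piece via an explicit Harish-Chandra induction functor. Verifying that the bijection from Step 2 is genuinely compatible with these extensions, for every block and every height-zero character, is where the adaptation of \cite{MalleSpathMcKay2} demands the most care.
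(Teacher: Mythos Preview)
Your overall architecture is reasonable, but you have misdiagnosed where the genuine difficulty lies, and as a result the proposal glosses over the step that actually carries the weight of the proof.

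\textbf{The equivariance in Step 3 is not automatic.} The bijection $\Omega$ of Definition~\ref{def:Omega} sends $R_L^G(\psi)_\eta$ to $\ind_{N_\psi}^N(\wt\psi\,\eta)$, and this depends on a \emph{choice} of extension $\wt\psi$ of each cuspidal $\psi$ to $N_\psi$. For $\Omega$ (and the induced map $\wt\Omega$ on $\wt G$) to be $\NNN_{\wt G E}(D)$-equivariant one needs these extensions to be chosen coherently, i.e.\ one needs an $NE$-equivariant extension map with respect to $L\lhd N$ for an $NE$-stable $\wt L$-transversal $\Bbb T\subseteq\cusp(L)$. This is condition (1) of Proposition~\ref{prop:ourconds}, and it is the technical heart of the paper: all of Section~\ref{sec:ext_C} is devoted to it. The point is that while $L$ is a direct product of $\mathrm{GL}$ and $\mathrm{Sp}$ factors, $N$ is \emph{not} the obvious wreath product, so extending characters of $L$ to their stabilizers in $N$ in an equivariant way requires an explicit analysis: one introduces a subgroup $V\leq N$ with $N=LV$ and $V\cap L\leq\Zent(L)$, verifies braid relations among carefully chosen generators of $V$, and applies the criterion of Proposition~\ref{cor_tool}. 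Your sentence ``equivariance is automatic'' skips precisely this.

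\textbf{A second missing ingredient} is the verification that $R({}^t\la)\leq\ker(\delta_{\la,t})$ for $t\in\wt T$ (condition (2) of Proposition~\ref{prop:ourconds}). The action of a diagonal automorphism on the parametrization $R_L^G(\psi)_\eta$ twists $\eta$ by a linear character $\delta_{\la,t}$ of $W(\la)$ (this is \cite[4.6]{MalleSpathMcKay2}), and one must show this twist is trivial on the reflection subgroup. The paper handles this via Lemma~\ref{lem4.15} and Proposition~\ref{propRlambda}; your Step~3 does not address it.

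\textbf{On Step 4.} Rather than verifying the character-triple isomorphism directly, the paper routes everything through the Brough--Sp\"ath criterion (Theorem~\ref{thm:newAMconds}) and its specialization Proposition~\ref{prop:ourconds}. Once the equivariant extension map exists, Lemmas~\ref{lem:omegapropsabove} and~\ref{lem:equivextabove} give the required properties of $\wt\Omega$ essentially for free, and the cyclicity of $E$ together with the abelianness of $\out(G)\cong C_2\times E$ dispatches the remaining extendibility and cocycle conditions without any ad hoc $2$-cocycle computation. So what you flag as the ``main obstacle'' is in fact handled structurally, while the step you call automatic is where the real work happens.
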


Section 2 deals with height zero characters in $\ell$-blocks of groups of Lie type over $\F_q$ when $\ell$ divides $q-1$. Then Section 3 gives a streamlined version of the inductive Alperin-McKay conditions in that case, see Proposition~\ref{prop:ourconds}. Section 4 uses some methods from \cite{MalleSpathMcKay2} and the description of normalizers of split Levi subgroups to check those conditions in the case of finite symplectic groups.

\subsection{Notation for characters and blocks}

Given finite groups $H\leq G$, we write $\irr(G)$ for the set of irreducible (complex) characters of $G$, $\irr(G\mid\varphi)$ for the set of irreducible constituents of $\varphi^G:=\ind_H^G(\varphi)$ when $\varphi\in\irr(H)$, and $\irr(H\mid \chi)$ for the set of irreducible constituents of $\chi|_H:=\res^G_H(\chi)$ for $\chi\in\irr(G)$.  More generally, for any  subset $X\subseteq \irr(H)$ we write $\irr(G\mid X):=\cup_{\varphi\in X}\irr(G\mid\varphi)$. 

Let $\ell$ be a prime number. Given a defect $\ell$-subgroup $D$ of $G$, we write $\irr(G\mid D)$ and $\irr_0(G\mid D)$ for the set of irreducible characters lying in an $\ell$-block with defect group $D$ and the set of those characters with height $0$ within their block, respectively.  We denote by $\Bl(G)$ the set of $\ell$-blocks of $G$ and whenever $\chi\in\irr(G)$, we write $b_G(\chi)$ for the block of $G$ containing $\chi$.  We will write $G_\chi$, respectively $G_B$, for the stabilizer in a group $G$ of a character $\chi$, respectively block $B$, of some normal subgroup. For $b$ a block of some subgroup of $G$, we denote by $b^G$ the corresponding Brauer induced block of $G$ when defined (see \cite[p. 87]{NavarroBlocks}).

For any integer $n$, we write $n_\ell$ for the largest power of $\ell$ dividing $n$.  Further, for an abelian group $H$, we write $H_\ell$ for the Sylow $\ell$-subgroup of $H$.

Finally, for $H\lhd G$, the following definition will be useful.

\begin{definition}\label{ExtMap}
Let $H\lhd G$ and let $\Bbb I$ be a subset of $\Irr(H)$.	An {\it extension map with respect to $H\lhd G$ for} $\Bbb I$ is any map $$\Lambda\colon{\Bbb I}\to\coprod_{G'\ :\ H\lhd G'\leq G}\Irr(G')$$ associating to each $\varphi\in\Bbb I$ an extension $\Lambda (\varphi)$ of $\varphi$ in $\Irr(G_\varphi)$.
\end{definition}

\section{Constructing the Bijection}\label{sec:map}

The inductive Alperin-McKay conditions from \cite{spathAMreduction} require a bijection between height-zero characters having certain properties. In the present section, we introduce the finite groups of Lie type and we describe a bijection of characters for certain primes $\ell$ (see Corollary~\ref{cor:map}).

\subsection*{The Framework and More Notation}

We refer to \cite{dignemichel} for characters of finite groups of Lie type.
Throughout this section, we let $G=\bg{G}^F$ be a group of Lie type defined over $\F_q$, where $q$ is a power of a prime $p$, $\bg{G}$ is a connected reductive algebraic group, and $F$ is a Frobenius endomorphism on $\bg{G}$.  Further let $(\bg{G}^\ast, F^\ast)$ be dual to $(\bg{G}, F)$ and let $G^\ast:={\bg{G}^\ast}^{F^\ast}$.

We write $\mathcal{E}(G,s)$ for the rational Lusztig series corresponding to the conjugacy class of the semisimple element $s\in G^\ast$ (see \cite[14.41]{dignemichel}).  Recall that $\mathcal{E}(G,s)$ depends only on the conjugacy class of $s$ in $G^*$ and that $\irr(G)$ is the disjoint union of the various sets $\mathcal{E}(G,s)$.  Let $\ell$ be a prime not dividing $q$.   If $s$ is a semisimple $\ell'$-element, then we write $\mathcal{E}_\ell(G,s)$ for the union of series of the form $\mathcal{E}(G, st)$, where the union ranges over $\ell$-elements $t\in \Cent_{G^\ast}(s)$.  By Brou\'e-Michel's theorem \cite[9.12(i)]{cabanesenguehard}, $\mathcal{E}_\ell(G,s)$ is also a union of $\ell$-blocks.  We will write $\mathcal{E}(G,\ell')$ for the union $\cup_s \mathcal{E}(G,s)$ where $s$ ranges over semisimple $\ell'$-elements of $G^*$.  

Now, we let $\bg{L}$ be a fixed split Levi subgroup of $\bg{G}$ (i.e. $\bg{L}$ is $F$-stable and the Levi supplement of some $F$-stable parabolic subgroup) and $L:=\bg{L}^F$ the corresponding Levi subgroup of $G$.  We fix a character $\lambda\in\cusp(L)$, where $\cusp(L)$ is the set of irreducible cuspidal characters of $L$, so that $(L,\la)$ is a cuspidal pair (see \cite[Ch. 6]{dignemichel}).  Further, assume that $\la\in\mathcal{E}(L, \ell')$.

Assume $\ell$ divides $q-1$ and let $b:=b_L(\lambda)$ denote the $\ell$-block of $L$ containing $\lambda$, which by the main results of \cite{CabanesEnguehard99, KessarMalle15} often (and in particular in the situations considered here) parametrizes a block $B:=b_G(L,\lambda)$ of $G$.  A defining property (see \cite[4.1(a)]{CabanesEnguehard99}) is that $B$ contains the constituents of $R_L^G(\lambda)$, where $R_L^G$ denotes here Harish-Chandra induction (see \cite[4.6(iii), 6.1]{dignemichel}).   Further, we have $B=b^G$ (see \prettyref{prop:HCblock} below).

Let $N:=\NNN_{\bg{G}}(\bg{L})^F$ be the fixed points under $F$ of the normalizer of $\bg{L}$ in $\bg{G}$ and let $\wt{b}$ be a block of $N$ lying above $b$.  Further, for a block $c$, let $D(c)$ denote a fixed defect group for $c$.

 For a cuspidal pair $(L,\psi)$, the irreducible constituents of $R_L^G(\psi)$ are in bijection with the irreducible characters of $W(\psi):=N_\psi/L$, and we will write the constituent corresponding to $\eta\in\irr(W(\psi))$ as $R_L^G(\psi)_\eta\in\irr(G)$, as in \cite[4.D]{MalleSpathMcKay2}.

\subsection{First Steps: The Global Side}

\begin{proposition}\label{prop:HCblock}
Let $G=\bg{G}^F$ be a finite group of type as in the previous section, with $F$ defining $\bf G$ over a field with $q$ elements. 
Let $\ell$ be a prime good for $\bf G$ and dividing $q-1$. Let $\psi\in \irr(L)$ (not necessarily cuspidal) for $L:=\bg{L}^F$ with $\bg L$ a split Levi subgroup of $\bg G$. Assume $\ell\nmid \, [ \Zent(\bG{G})^F:\Zent^\circ(\bg{G})^F]$. Then $L=\Cent_G(\Zent(L)_\ell)$ and

\[R_L^G(\psi)\in \Z\Irr(b^G)\]
where $b$ is the $\ell$-block of $\psi$ in $L$.
\end{proposition}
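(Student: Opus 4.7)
The proposition has two assertions that I would establish in sequence. For (i), that $L = \Cent_G(\Zent(L)_\ell)$: since $\bg L$ is a split Levi of $\bg G$, we have $\bg L = \Cent_{\bg G}(\bg S)$ for $\bg S := \Zent(\bg L)^\circ$, an $F$-split torus of some rank $r$. Because $F$ acts as multiplication by $q$ on the cocharacter lattice of $\bg S$, the fixed points satisfy $\bg S^F \cong (\F_q^\times)^r$, so $\bg S^F_\ell$ has $\F_\ell$-rank $r$. I would fix a maximal torus $\bg T_0 \subseteq \bg L$ containing $\bg S$ and verify, using the good-prime hypothesis, that every root $\alpha$ of $(\bg G, \bg T_0)$ outside the root system of $\bg L$ restricts to a character $\alpha|_{\bg S}$ not lying in $\ell \cdot X^*(\bg S)$, so that $\alpha|_{\bg S}$ is non-trivial on the full-$\F_\ell$-rank subgroup $\bg S^F_\ell$. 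It follows that the corresponding root subgroup does not centralise $\bg S^F_\ell$, and hence $\Cent_{\bg G}(\bg S^F_\ell) = \bg L$. Passing to $F$-fixed points, $\Cent_G(\bg S^F_\ell) = L$; combined with $\bg S^F_\ell \subseteq \Zent(L)_\ell$ (the assumption $\ell \nmid [\Zent(\bg G)^F : \Zent^\circ(\bg G)^F]$ propagating to $\bg L$ and ruling out unwanted $\ell$-contributions from the component group of $\Zent(\bg L)$), this gives $\Cent_G(\Zent(L)_\ell) \subseteq L$, while $L \subseteq \Cent_G(\Zent(L))$ is trivial.

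For (ii), assertion (i) provides an $\ell$-subgroup $D := \Zent(L)_\ell$ of $G$ with $L = \Cent_G(D)$, so the Brauer induced block $b^G$ is defined. My strategy is to reduce to the cuspidal case: any $\psi \in \Irr(L)$ is a Harish-Chandra constituent of some cuspidal pair $(L', \psi')$ with $\bg{L'} \subseteq \bg L$ a split Levi (hence also a split Levi of $\bg G$) and $L' := \bg{L'}^F$. Under our hypotheses the cuspidal-pair parametrisation of $\ell$-blocks from \cite{CabanesEnguehard99} gives $b = b_L(\psi) = b_{L'}(\psi')^L$, so by transitivity of Brauer induction $b^G = b_{L'}(\psi')^G$, while transitivity of Harish-Chandra induction $R_{L'}^G = R_L^G \circ R_{L'}^L$ shows every constituent of $R_L^G(\psi)$ is a constituent of $R_{L'}^G(\psi')$. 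This reduces the claim to $(L', \psi')$ in place of $(L, \psi)$. For the cuspidal case, the statement that every constituent of $R_{L'}^G(\psi')$ lies in $b_{L'}(\psi')^G$ is the classical block-compatibility of Harish-Chandra induction with Brauer induction, obtained by combining the Brou\'e--Michel decomposition \cite[9.12]{cabanesenguehard} with the results of \cite{CabanesEnguehard99}.

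The main obstacle is assertion (ii). Assertion (i) is a routine root-system computation once the good-prime hypothesis and the structure of $\bg S^F$ are in hand. The real work in (ii) is the book-keeping needed to invoke \cite{CabanesEnguehard99}: confirming that the cuspidal-pair parametrisation applies in the present split setting with $\ell \mid q-1$, verifying that the intermediate Levi $\bg{L'}$ inherits the split property from $\bg L$, and identifying $b_L(\psi)$ with $b_{L'}(\psi')^L$ via that parametrisation. Once these ingredients are in place, the reduction is essentially transitivity and the cuspidal case is a direct citation.
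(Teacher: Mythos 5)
Your approach to assertion (ii) diverges from the paper's and contains a gap that I do not think can be repaired without essentially changing strategy. You reduce along Harish-Chandra series: take the cuspidal support $(L',\psi')$ of $\psi$ and claim $b_L(\psi) = b_{L'}(\psi')^L$ ``by the cuspidal-pair parametrisation of [CE99].'' But that parametrisation (\cite[4.1]{CabanesEnguehard99}) associates to $b_L(\psi)$ a cuspidal pair $(L'',\la'')$ with $\la'' \in \mathcal{E}(L'',\ell')$, and this pair is in general \emph{not} the cuspidal support of $\psi$. When $\psi \notin \mathcal{E}(L,\ell')$, the cuspidal support $\psi'$ will likewise lie outside $\mathcal{E}(L',\ell')$, so you cannot feed it into [CE99]. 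Concretely: the identity $b_L(\psi)=b_{L'}(\psi')^L$ requires knowing that every constituent of $R_{L'}^L(\psi')$ lies in a single block of $L$ --- which is precisely the statement you are trying to prove, for the smaller pair $(L,L')$. You would need an induction on rank, and even then the base case (a cuspidal $\psi'$ outside $\mathcal{E}(L',\ell')$) is not a ``direct citation'' of Brou\'e--Michel plus [CE99]; those cover $\psi' \in \mathcal{E}(L',\ell')$ only.

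The paper's proof sidesteps all of this by reducing within a fixed $L$ rather than down a Harish-Chandra tower. It first dispatches the case $\psi\in\mathcal{E}(L,\ell')$ by citing \cite[2.5]{CabanesEnguehard99}. For general $\psi$, it invokes the Geck--Hiss basic set theorem \cite[14.4]{cabanesenguehard}: $d^1\mathcal{E}(L,s)$ is a $\mathbb{Z}$-basis for $d^1$ of the union of blocks $\mathcal{E}_\ell(L,s)$, where $d^1$ is restriction of class functions to $\ell'$-elements. Thus $d^1\psi$ is an integer combination of $d^1\zeta$ for $\zeta\in\mathcal{E}(L,\ell')$ \emph{in the same block $b$}. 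Since $d^1$ commutes with $R_L^G$ \cite[7.5]{dignemichel}, and each $R_L^G(\zeta)\in\mathbb{Z}\Irr(b^G)$ by the first case, one concludes that $d^1 R_L^G(\psi)$, and hence $R_L^G(\psi)$ itself, lies in $\mathbb{Z}\Irr(b^G)$. This is the key idea you are missing: the reduction is via decomposition on $\ell'$-elements, not via cuspidal support. For assertion (i), the paper simply cites \cite[3.2]{CabanesEnguehard99}; your root-system argument would likely go through but is reinventing a known result.
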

\begin{proof} The first equality comes from \cite[3.2]{CabanesEnguehard99}. We now check the second statement.
	
First, note that it suffices to show that all constituents of $R_L^G(\psi)$ lie in the same block $B$, using \cite[6.4]{NavarroBlocks} for example, since $R_L^G(\psi)$ is the induction of the inflation of $\psi$ and hence we must have $b^G=B$. 

If $\psi\in\mathcal{E}(L,\ell')$, then the statement follows from \cite[2.5]{CabanesEnguehard99}.  So assume $\psi\not\in\mathcal{E}(L,\ell')$ and let $b$ be a block of $L$ with $\irr(B)\subseteq \mathcal{E}_\ell(L, s)$ for some semisimple $\ell'$--element $s$ of $L^\ast$.  By a theorem of Geck-Hiss \cite[14.4]{cabanesenguehard}, we know $d^1\mathcal{E}(L,s)$ forms a basic set for $\mathcal{E}_\ell(L,s)$, where $d^1$ is the function that restricts a class function to $\ell'$ elements.  (Note here that $\ell$ is good for $\bg{L}$ and $\ell\nmid [\Zent(\bG{L})^F:\Zent^\circ(\bg{L})^F] $ by \cite[3.3]{CabanesEnguehard99}.)  In particular, $d^1\psi$ is an integral linear combination of members of $d^1\mathcal{E}(L,\ell')$.

Note that $d^1$ and $R_L^G$ commute, see \cite[7.5]{dignemichel}.  Further, the constituents of $R_L^G(\psi)$ lie in the same block if and only if the same is true for $d^1R_L^G(\psi)=R_L^G(d^1\psi)$, since this is a sum of Brauer characters in the same blocks as the constituents of $R_L^G(\psi)$. 

Let $\zeta\in\mathcal{E}(L,\ell')$ such that $d^1\zeta$ appears in the decomposition of $d^1\psi$.  The character $\zeta$ must be in the same block $b$ as $\psi$, and we have $R_L^G(\zeta)\subseteq\Z\irr(b^G)$ from above.  Then the irreducible constituents of $R_L^G(d^1\zeta)=d^1 R_L^G(\zeta)$ must lie in the block $b^G$ as well. Since this is true for every such $\zeta$, it is also true for $R_L^G(d^1\psi)$, and hence we must have $R_L^G(\psi)\subseteq\Z\irr(b^G)$.
 \end{proof}
The following can be seen from \cite[Theorems A and B]{KessarMalle15} or \cite[4.1]{CabanesEnguehard99}, by considering the case $\bg{G}=\bg{L}$ and $e=1$. 
\begin{lemma}\label{lem:unique cusp}
Let $\bg{L}$ be a split Levi subgroup of a reductive algebraic group $\bg{G}$ and let $\lambda$ be a cuspidal character of $L:=\bg{L}^F$.  Let $\ell$ be a prime dividing $q-1$ such that $\ell\geq 5$ and $\ell\geq 7$ if $\bg{G}$ has a component of type ${\sf E}_8$.   Suppose $\la\in\mathcal{E}(L,\ell')$ and let $b$ be the $\ell$-block of $L$ containing $\la$.  Then $\la$ is the unique member of $\irr(b)\cap \mathcal{E}(L,\ell')$.  If $(L',\la')$ is another cuspidal pair such that $\la'\in\mathcal{E}(L',\ell')$ and $b_G(L,\la)=b_G(L',\la')$, then $(L,\la)$ is $G$-conjugate to $(L', \la')$.
\end{lemma}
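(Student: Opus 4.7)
The plan is to invoke the block-parametrization theorem of Kessar--Malle \cite[Thms.~A, B]{KessarMalle15} (or its predecessor \cite[4.1]{CabanesEnguehard99}), applied once with $\bg{G}$ replaced by the split Levi subgroup $\bg{L}$ itself, and once with $\bg{G}$. The key observation is that when $\ell$ divides $q-1$ the order of $q$ modulo $\ell$ equals $e=1$, so split Levi subgroups coincide with $e$-split Levi subgroups and the notions of cuspidal pair and of $e$-cuspidal pair agree. Under the stated bound on $\ell$, the prime is good for $\bg{G}$ (and hence for every Levi), and the auxiliary constraint $\ell \nmid [\Zent(\bg{H})^F : \Zent^\circ(\bg{H})^F]$ required by the parametrization theorem holds for every Levi $\bg{H}$ that appears.

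For the first assertion, the theorem applied to $\bg{L}$ states that the $\ell$-blocks of $L$ meeting $\mathcal{E}_\ell(L,s)$, for some semisimple $\ell'$-element $s \in L^*$, are in bijection with $L$-conjugacy classes of cuspidal pairs $(M,\mu)$ of $L$ with $\mu \in \mathcal{E}(M,s)$, the correspondence being that $\mathcal{E}(L,s) \cap \irr(b)$ consists exactly of the constituents of $R_M^L(\mu)$ lying in $\mathcal{E}(L,s)$. Since $(L,\lambda)$ is itself cuspidal and $\lambda \in \mathcal{E}(L,s)$ for the label $s$ of $b$, the pair attached to $b$ by this bijection is $(L,\lambda)$ itself. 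Moreover $\mathcal{E}(L,\ell') \cap \irr(b) = \mathcal{E}(L,s) \cap \irr(b)$ since the various $\mathcal{E}_\ell(L,t)$, as $t$ ranges over $L^*$-classes of semisimple $\ell'$-elements, partition $\irr(L)$ into unions of blocks. This common set is therefore the set of constituents of $R_L^L(\lambda) = \lambda$, that is, $\{\lambda\}$.

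For the second assertion, applying the same theorem to $\bg{G}$ parametrizes the blocks of $G$ meeting a given $\mathcal{E}_\ell(G,s)$ by $G$-conjugacy classes of cuspidal pairs $(L,\lambda)$ with $\lambda \in \mathcal{E}(L,s)$ and the block attached to $(L,\lambda)$ being $b_G(L,\lambda) = b_L(\lambda)^G$. Consequently two such pairs yielding the same block must be $G$-conjugate.

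The main point requiring attention is the routine but essential bookkeeping: one must confirm that the hypothesis $\ell \geq 5$ (respectively $\ell \geq 7$ if $\bg{G}$ has a component of type ${\sf E}_8$) simultaneously ensures, for every split Levi $\bg{H}$ of $\bg{G}$, both the goodness of $\ell$ for $\bg{H}$ and the bound on $[\Zent(\bg{H})^F : \Zent^\circ(\bg{H})^F]$ demanded by \cite{KessarMalle15} and \cite{CabanesEnguehard99}. Goodness is inherited by Levis, and the central quotient hypothesis can be checked uniformly as in \cite[3.3]{CabanesEnguehard99}; once verified, both statements are immediate consequences of the parametrization.
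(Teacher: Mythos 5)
Your proposal is correct and follows exactly the approach the paper indicates: the paper's ``proof'' is the one-sentence remark preceding the lemma, citing \cite[Thms.~A, B]{KessarMalle15} or \cite[4.1]{CabanesEnguehard99} applied with $\bg{G}=\bg{L}$ and $e=1$, and your write-up simply unpacks that citation (including the reduction from $\mathcal{E}(L,\ell')\cap\irr(b)$ to $\mathcal{E}(L,s)\cap\irr(b)$, which is the only step requiring any work).
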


\subsection{First Steps: The Local Side}\label{sec:local}

For the remainder of the section, we will be interested in the situation that $L=\Cent_G(\Zent(L)_\ell)$. We begin by recording two useful consequences of this assumption.

\begin{lemma}\label{lem:uniqueblock}
	Assume that $L=\Cent_G(\Zent(L)_\ell)$ and let $c\in \Bl(L)$ with defect group $D:=D(c)$. Then \begin{enumerate}[label=(\alph*)]
		\item $\Cent_G(D)\leq \Cent_G(\Zent(L)_\ell)=L$
	
	\item If $N'$ is a subgroup of $N=\NNN_{\bg{G}}(\bg{L})^F$ containing $L$, then $c^{N'}$ is defined and is the unique $\ell$-block of $N'$ covering $c$.

\end{enumerate} \end{lemma}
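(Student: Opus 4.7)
For part (a), the plan is to exploit the classical block-theoretic fact that the central $\ell$-subgroup $\Zent(L)_\ell$ is contained in every defect group of every $\ell$-block of $L$ (a consequence of the Brauer homomorphism acting as the identity on central $\ell$-elements; see for instance \cite[Thm.~4.8]{NavarroBlocks}). Applied to $c$, this immediately yields $\Zent(L)_\ell \leq D$, and then the hypothesis of the lemma gives $\Cent_G(D) \leq \Cent_G(\Zent(L)_\ell) = L$.

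For part (b), I would first observe that $L \trianglelefteq N'$: since $N' \leq N = \NNN_{\bg{G}}(\bg{L})^F$, every element of $N'$ normalises the $F$-stable Levi subgroup $\bg{L}$ and hence also its fixed-point subgroup $L = \bg{L}^F$. Combining this with part (a) yields $\Cent_{N'}(D) = \Cent_G(D) \cap N' \leq L$, and in particular $D\,\Cent_{N'}(D) \leq L$. The plan is then to invoke the standard criterion for Brauer induction in the presence of this ``large centraliser'' condition: for a normal inclusion $H \trianglelefteq K$ and a block $b \in \Bl(H)$ with defect group $D$ satisfying $D\,\Cent_K(D) \leq H$, the induced block $b^K$ is defined and is the unique block of $K$ covering $b$ (see \cite[Ch.~9]{NavarroBlocks}). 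Applying this with $H = L$, $K = N'$ and $b = c$ gives both assertions of (b) in one stroke.

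The main obstacle is bibliographic rather than mathematical: locating (or assembling) a single clean reference that packages both ``defined'' and ``unique covering block'' under the hypothesis $D\,\Cent_{N'}(D) \leq L$. If no one-line reference is convenient, uniqueness can be obtained by hand as follows. The hypothesis first forces $c$ to be $N'$-stable, since every $N'$-conjugate of $c$ has defect group $N'$-conjugate to $D$ and the inclusion $\Cent_{N'}(D) \leq L$ controls the Brauer correspondent data at $D$; a Fong--Reynolds type argument inside $\NNN_{N'}(D)$ then pins down exactly one block of $N'$ covering $c$, and this block is $c^{N'}$.
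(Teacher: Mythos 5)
Your proof is correct and takes essentially the same route as the paper: part (a) in both cases rests on the fact that central (normal) $\ell$-subgroups of $L$ are contained in every defect group, and part (b) in both cases combines $\Cent_{N'}(\cdot) \leq L$ with the regularity/uniqueness results of \cite[Ch.~9]{NavarroBlocks}. The only cosmetic difference is that the paper makes the intermediate step explicit — it picks a covering block $c'$, chooses $D(c') \geq D$, deduces $\Cent_{N'}(D(c')) \leq L$, and invokes \cite[9.19, 9.20]{NavarroBlocks} on $D(c')$ — whereas you state the packaged corollary in terms of $D(c)$ directly; since $D \leq D(c')$ forces $\Cent_{N'}(D(c')) \leq \Cent_{N'}(D)$, the two formulations are interchangeable.
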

\begin{proof} The first point comes from the fact that $D$ contains any normal $\ell$-subgroup of $L$. For the second point, let $c'\in\mathrm{Bl}(N'|c)$ be a block of $N'$ covering $c$.  Then we may find a defect group $D({c'})$ for $ c'$ such that $D\leq D({c'})$.  Using (a) we know $\Cent_{N'}(D({c'}))\leq \Cent_N(D({c'}))\leq \Cent_N(D)\leq L$. Then by \cite[9.20]{NavarroBlocks}, it follows that $c'$ is regular with respect to $N'$, and hence $c^{N'}$ is defined and $c'=c^{N'}$ by \cite[9.19]{NavarroBlocks}.
\end{proof}

We continue with $(L, \la)$ as in the situation of Lemma~\ref{lem:unique cusp}.
Recall our notation $B:=b_G(L,\la)$ and $b:=b_L(\la)$ with $\la\in\mathcal{E}(L,\ell')$.  Further, recall that we let $\wt{b}\in\Bl(N\mid b)$, and hence $\wt{b}$ is the unique $\ell$-block of $N$ above $b$, by \prettyref{lem:uniqueblock}(b).

\begin{lemma}\label{lem:normalizerD}
Let $\ell$ be a prime dividing $q-1$ and not dividing $[\Zent(\bg{G})^F:\Zent^\circ(\bg{G})^F]$, such that $\ell\geq 5$ and further $\ell\geq 7$ if $\bg{G}$ has a component of type ${\sf E}_8$.  Let $D:=D(B)$. Then the group $N=\NNN_{\bg{G}}(\bg{L})^F$ contains $\NNN_G\left(D\right)$ and is $\aut([G,G])_{B,D}$-stable.  
\end{lemma}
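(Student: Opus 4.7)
The plan is to exhibit a characteristic subgroup $Q$ of $D$ whose centralizer in $G$ equals $L$; with this in hand both assertions follow at once. The natural candidate, suggested by \prettyref{prop:HCblock}, is $Q:=\Zent(L)_\ell$, for which $L=\Cent_G(Q)$.

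First I would apply the Cabanes--Enguehard description of defect groups in the $e=1$ situation \cite[Thm.~4.1]{CabanesEnguehard99}: under the hypotheses on $\ell$, the defect group of $b=b_L(\la)$ inside $L$ is exactly $\Zent(L)_\ell$, and the defect group $D$ of $B=b^G$ is a Sylow $\ell$-subgroup of $N_\la$, necessarily containing $Q$. In particular $D\le N$ and $Q\trianglelefteq D$, with $D/Q$ a Sylow $\ell$-subgroup of the relative Weyl group $W(\la)=N_\la/L$.

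The main step is then to verify that $Q$ is characteristic, not just normal, in $D$. My approach would be to identify $Q$ intrinsically, for example as the unique maximal normal abelian subgroup of $D$ of maximal $\mathbb F_\ell$-rank: the good prime hypotheses ($\ell\ge 5$, and $\ell\ge 7$ for type ${\sf E}_8$) are exactly what is needed to force any abelian normal subgroup of $D$ coming from the Weyl-group quotient $D/Q$ to have strictly smaller rank than $Q=\Zent(L)_\ell$. A more robust alternative would be to pass through the theory of $B$-Brauer pairs and invoke Alperin--Brou\'e to see that $(Q,b)$ is the unique subpair of $(D,e_D)$ with first coordinate $Q$, so that $\NNN_G(D)$ preserves it.

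Once $Q$ is characteristic in $D$, both conclusions fall out from
\[
\NNN_G(D)\le\NNN_G(Q)=\NNN_G(\Cent_G(Q))=\NNN_G(L)=N,
\]
together with the observation that any $\sigma\in\aut([G,G])_{B,D}$ satisfies $\sigma(D)=D$ and hence $\sigma(Q)=Q$, $\sigma(L)=L$, $\sigma(N)=N$. The main obstacle I anticipate is the characteristicity step: it is precisely there that the good prime hypothesis on $\ell$ must be used in an essential way to rule out alternative abelian normal subgroups of maximal rank inside the (possibly non-abelian) $D$.
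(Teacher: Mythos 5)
Your overall strategy matches the paper's in spirit: both proofs produce a characteristic abelian normal subgroup of $D$ whose (connected) centralizer recovers $\bg{L}$, forcing $\NNN_G(D)\leq N$. The paper, however, gets the hard part for free by citing \cite[4.16]{CabanesEnguehard99}, which asserts that $D$ has a \emph{unique} maximal abelian normal subgroup $Z$ (uniqueness giving characteristicity immediately), together with $\NNN_G(\Cent^\circ_{\bg{G}}(Z))\leq N$ and the splitness of $1\to Z\to D\to D/Z\to 1$. Your proposal sets out to re-derive this structure, and it is precisely there that the argument is incomplete.

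Concretely: you take $Q:=\Zent(L)_\ell$ and correctly observe $L=\Cent_G(Q)$ (via \prettyref{prop:HCblock} and \cite[3.2]{CabanesEnguehard99}), but the characteristicity of $Q$ in $D$ is never established, and the two routes you sketch do not close the gap as written. The ``unique maximal normal abelian subgroup of maximal $\F_\ell$-rank'' criterion amounts to re-proving the content of \cite[4.16]{CabanesEnguehard99} by hand; to make it work you would need to rule out abelian normal subgroups of $D$ mapping nontrivially into $D/Q$ and of rank at least $\mathrm{rank}\,Q$, which requires a genuine analysis of the extension and of the $W(\la)$-action that you do not carry out. The Brauer-pair alternative does not yield characteristicity either: Alperin--Brou\'e uniqueness of inclusion says that \emph{for a fixed} normal $Q\leq D$ the subpair $(Q,b_Q)\leq (D,e_D)$ is unique, which is automatic and says nothing about whether an automorphism of $D$ must stabilize the subgroup $Q$ itself. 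You flag this step yourself as the main obstacle, and indeed it is the missing ingredient; the fastest repair is to invoke \cite[4.16]{CabanesEnguehard99} directly, as the paper does, rather than attempt to re-derive it.

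Two smaller inaccuracies: the displayed chain should read $\NNN_G(Q)\leq\NNN_G(\Cent_G(Q))$, not equality (only the forward inclusion holds in general, but that is all you need); and the claim that $D$ is a Sylow $\ell$-subgroup of $N_\la$ overreaches, since by the Cabanes--Enguehard description $|D|=|Z|\cdot|W(\la)|_\ell$ and $Z$ need not be a full Sylow $\ell$-subgroup of $L$. The second part of the lemma (stability of $N$ under $\Aut([G,G])_{B,D}$) would indeed follow from the same characteristicity once it is established, which is consistent with the paper's reference to the argument in the proof of \cite[5.1]{broughspath}.
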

\begin{proof}
We know from \cite[4.16]{CabanesEnguehard99} that $D$ has a unique maximal abelian normal subgroup, $Z$, such that $\NNN_G(\Cent^\circ_{\bg{G}}(Z))\leq N$ and that the extension $1\rightarrow Z\rightarrow D\rightarrow D/Z\rightarrow 1$ is split.  Hence $\NNN_G(D)\leq \NNN_G(Z)\leq \NNN_G(\Cent^\circ_{\bg{G}}(Z))\leq N$. The second statement follows from arguing as in the fifth paragraph of the proof of \cite[5.1]{broughspath}.
\end{proof}
\begin{lemma}\label{lem:samedefect}
 Keep the assumptions of \prettyref{lem:normalizerD}.  The defect groups may be chosen so that $D({\wt{b}})=D(B)$.
\end{lemma}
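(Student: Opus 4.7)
The plan is to show that a suitably chosen defect group $D$ of $B$ is simultaneously a defect group of $\widetilde b$. By \prettyref{prop:HCblock} we have $B=b^G$, while \prettyref{lem:uniqueblock}(b) gives $\widetilde b=b^N$; transitivity of Brauer block induction then yields $\widetilde b^G=B$, so a defect group of $\widetilde b$ may be chosen inside a defect group of $B$.

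For the reverse containment, fix $D:=D(B)$. By \prettyref{lem:normalizerD} we have $N_G(D)\leq N$, so in particular $D\leq N$ and $N_N(D)=N_G(D)$. Brauer's First Main Theorem applied in $G$ produces a block $\beta\in\Bl(N_G(D))$ of defect group $D$ with $\beta^G=B$; applied in $N$ (using $N_N(D)=N_G(D)$) it shows that $\beta^N$ is a well-defined block of $N$ with defect group $D$, and transitivity gives $(\beta^N)^G=B$.

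It remains to identify $\beta^N=\widetilde b$, which will give $D(\widetilde b)=D(\beta^N)=D=D(B)$. Since $L\lhd N$, the block $\beta^N$ covers some $b'\in\Bl(L)$, and \prettyref{lem:uniqueblock}(b) forces $\beta^N=(b')^N$, whence $(b')^G=B=b^G$ by transitivity. Under our hypotheses the Cabanes--Enguehard/Kessar--Malle parametrization associates to $b'$ a cuspidal pair $(\bg M,\mu)$ with $\bg M$ an $F$-stable Levi of $\bg L$ and $\mu\in\cusp(\bg M^F)\cap\mathcal{E}(\bg M^F,\ell')$, and this parametrization respects Brauer induction, so $(b')^G=b_G(\bg M^F,\mu)$. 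Applying \prettyref{lem:unique cusp} to $b_G(\bg M^F,\mu)=b_G(L,\lambda)$ shows that $(\bg M^F,\mu)$ and $(L,\lambda)$ are $G$-conjugate; since $\bg M\leq\bg L$ and this $G$-conjugacy lifts to $\bg G$-conjugacy of $\bg M$ and $\bg L$ (using Zariski density of $\bg M^F$ in $\bg M$), dimension forces $\bg M=\bg L$, giving $\mu\in\cusp(L)$ with $\mu=n\lambda n^{-1}$ for some $n\in N_G(L)=N$ (the last equality again via Zariski density, since $\bg L$ is recovered from $L$ as its Zariski closure). Hence $b'=nbn^{-1}$ and $\beta^N=(b')^N=n\widetilde b n^{-1}=\widetilde b$, because inner automorphisms fix every block. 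The main obstacle here is the invocation of the cuspidal-pair parametrization together with \prettyref{lem:unique cusp} to confine $b'$ to the $N$-orbit of $b$; the remainder is standard Brauer-correspondence bookkeeping.
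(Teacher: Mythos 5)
Your route is genuinely different from the paper's, and considerably longer. The paper's proof simply chooses nested defect groups $D(b)\leq D(\widetilde b)\leq D(B)$ via \cite[4.13]{NavarroBlocks}, notes $\Cent_G(D(\widetilde b))\leq\Cent_G(D(b))\leq L\leq N$ from \prettyref{lem:uniqueblock}(a), and concludes by the extended first main theorem \cite[9.24]{NavarroBlocks} together with \prettyref{lem:normalizerD} that $D(\widetilde b)=D(B)\cap N=D(B)$; no identification of any block is needed. You instead construct $\beta^N$ by two applications of Brauer's First Main Theorem and then try to match it to $\widetilde b$ by tracking cuspidal pairs, which is exactly the step you flag as the crux.

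That matching step contains a genuine error. Both Zariski-density appeals are false: $L=\bg L^F$ and $\bg M^F$ are finite, hence Zariski closed of dimension zero, so the Zariski closure of $L$ is $L$ itself (not $\bg L$), and $\bg M^F$ is not Zariski dense in the positive-dimensional group $\bg M$. The first appeal is harmless, since $\bg M=\bg L$ already follows from $\bg M\leq\bg L$ (both connected Levis) together with $|\bg M^F|=|\bg L^F|$ by comparing dimensions. But the second appeal is precisely what you need in order to put the conjugating element $n$ into $N=\NNN_{\bg G}(\bg L)^F$, and as written it is unsupported. The equality $N_G(L)=N$ does hold under the present hypotheses, but the right reason passes through \prettyref{prop:HCblock}: from $L=\Cent_G(\Zent(L)_\ell)$ one deduces, by a dimension count, that $\bg L=\Cent^\circ_{\bg G}(\Zent(L)_\ell)$, so any $g\in N_G(L)$ normalizes $\Zent(L)_\ell$ and hence $\bg L$. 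Without supplying this (or something equivalent), your argument does not close; and even with it supplied, the paper's direct chain of defect groups is a much shorter path to the same conclusion.
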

\begin{proof}
From \prettyref{lem:uniqueblock}(b) and \prettyref{prop:HCblock}, we have $\wt{b}=b^N$ and $B=b^G=\wt{b}^G$.  Then by \cite[4.13]{NavarroBlocks}, we may choose the defect groups such that $D(b)\leq D({\wt{b}})\leq D(B)$, so $\Cent_G(D({\wt{b}}))\leq \Cent_G(D(b))\leq L\leq N$ by \prettyref{lem:uniqueblock}(a).  Then $\wt{b}$ is admissible, and by \cite[9.24]{NavarroBlocks} and \prettyref{lem:normalizerD}, $D({\wt{b}})=D(B)\cap N=D(B)$.
\end{proof}


We will write $\cusp({L})$ and $\cusp(b)$ for the set of irreducible cuspidal
characters of $L$ and in the $\ell$-block $b$, respectively.  The next lemma describes the members of $\irr(\wt{b})$.

\begin{lemma}\label{lem:blockN}
Keep the assumptions of \prettyref{lem:normalizerD}. Then \[\irr(\wt{b})=\{\ind_{N_\psi}^N (\wt{\psi}\eta) \mid\psi\in\cusp(b); \eta\in\irr(N_\psi/L)\},\] where for each $\psi\in\cusp(b)$, $\wt{\psi}$ is a fixed extension of $\psi$ to $N_\psi$.
\end{lemma}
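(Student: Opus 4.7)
The plan is to apply Clifford theory to the normal inclusion $L\lhd N$, leveraging the unique-block property established in \prettyref{lem:uniqueblock}(b). That lemma guarantees $\wt b$ is the unique $\ell$-block of $N$ covering $b$, so $\irr(\wt b)=\irr(N\mid\irr(b))$. Both set-theoretic containments will then follow from (a) a description of $\irr(b)$ and (b) the standard Gallagher/Clifford formalism applied character by character. The $\supseteq$ direction is immediate: any $\ind_{N_\psi}^N(\wt\psi\eta)$ lies above $\psi\in\irr(b)$ and hence in the unique block $\wt b$ covering $b$.

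The crux is the $\subseteq$ direction, for which I would establish that every character in $\irr(b)$ is cuspidal in $L$, i.e., $\irr(b)=\cusp(b)$. I would deduce this from the hypotheses in force: $\la\in\cusp(L)\cap\mathcal{E}(L,\ell')$, $\ell\mid q-1$, and $L=\Cent_G(\Zent(L)_\ell)$, which makes $\bg L$ a $1$-split Levi so that in the Brou\'e--Malle $e$-Harish-Chandra framework $1$-cuspidality coincides with ordinary cuspidality. Under the assumptions of \prettyref{lem:normalizerD}, the block $b$ corresponds via the Cabanes--Enguehard/Kessar--Malle theorem to the cuspidal pair $(\bg L,\la)$ inside $\bg L$ itself, and so $\irr(b)\subseteq\mathcal{E}_\ell(L,s)=\bigcup_t\mathcal{E}(L,st)$, where $\la\in\mathcal{E}(L,s)$ and $t$ ranges over $\ell$-elements of $\Cent_{L^*}(s)$. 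Combining Jordan decomposition of characters in $\mathcal{E}_\ell(L,s)$ with the cuspidality of $\la$ and the uniqueness statement of \prettyref{lem:unique cusp} applied inside $L$, one verifies that every $\psi\in\irr(b)$ is again cuspidal.

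Granting $\irr(b)=\cusp(b)$, the remainder is routine. For each $\psi\in\cusp(b)$, I would invoke the extendibility of cuspidal characters of finite reductive groups to their stabilizers in the normalizer of a Levi subgroup (a classical result going back to Lusztig, with refinements by Geck and Malle) and fix an extension $\wt\psi\in\irr(N_\psi)$. Gallagher's theorem then yields $\irr(N_\psi\mid\psi)=\{\wt\psi\eta\mid\eta\in\irr(N_\psi/L)\}$, and Clifford induction gives $\irr(N\mid\psi)=\{\ind_{N_\psi}^N(\wt\psi\eta)\mid\eta\in\irr(N_\psi/L)\}$. Taking the union over $\psi\in\cusp(b)$ produces the claimed description of $\irr(\wt b)$.

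The main obstacle is the cuspidality identity $\irr(b)=\cusp(b)$; the extendibility of cuspidal characters is a standard input, and the Clifford/Gallagher bookkeeping is routine once every constituent of every $\chi|_L$ with $\chi\in\irr(\wt b)$ is known to be cuspidal.
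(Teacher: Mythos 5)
Your overall structure is right and matches the paper: both directions come down to Clifford theory for $L\lhd N$ plus the unique-block property of $\wt b$, with the crux being that every constituent of $\res^N_L\varphi$ for $\varphi\in\irr(\wt b)$ is cuspidal. The paper also uses Geck/Lusztig extendibility, Gallagher, and Clifford induction exactly as you do.

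However, your justification of the key cuspidality fact $\irr(b)\subseteq\cusp(L)$ has a real gap. You propose to deduce it from ``Jordan decomposition of characters in $\mathcal{E}_\ell(L,s)$ with the cuspidality of $\la$ and the uniqueness statement of Lemma~\ref{lem:unique cusp} applied inside $L$.'' But Lemma~\ref{lem:unique cusp} governs only cuspidal pairs $(L',\la')$ with $\la'\in\mathcal{E}(L',\ell')$; an arbitrary $\psi\in\irr(b)$ may lie in the Harish-Chandra series of a cuspidal pair $(M,\mu)$ with $\mu\notin\mathcal{E}(M,\ell')$, and then the lemma cannot be invoked as stated. Jordan decomposition of a $\psi\in\mathcal{E}(L,st)$ with $t\neq 1$ refers you to unipotent characters of $\Cent_{L^*}(st)$, which is a different group from $\Cent_{L^*}(s)$; there is no immediate transfer of cuspidality. (Indeed the paper only establishes the related fact ``blocks $b_G(G,\la)$ with $\la\in\cusp(G)\cap\mathcal{E}(G,\ell')$ consist entirely of cuspidal characters'' later, in Remark~\ref{rem:ht0cusp}(a), as a byproduct of the substantially harder Proposition~\ref{prop:ht0cusp}, which uses Bonnaf\'e--Dat--Rouquier and a case analysis.)

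What is actually needed, and what the paper's proof supplies, is a bridge from $(M,\mu)$ to a pair with a character in $\mathcal{E}(\cdot,\ell')$: writing $\psi$ as a constituent of $R_M^L(\mu)$, one applies \cite[4.1]{CabanesEnguehard99} together with Proposition~\ref{prop:HCblock} to $b_M(\mu)$ to produce a cuspidal pair $(M_1,\mu_1)$ of $M$ with $\mu_1\in\mathcal{E}(M_1,\ell')$ and $b_M(\mu)=b_M(M_1,\mu_1)$. Transitivity of Harish-Chandra induction then gives $b_G(L,\la)=b_G(M_1,\mu_1)$, so by Lemma~\ref{lem:unique cusp} (in $G$, though one could equally run it in $L$) the pair $(M_1,\mu_1)$ is conjugate to $(L,\la)$, forcing $M_1=M=L$ and hence cuspidality of $\psi$. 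Without this intermediate reduction your argument does not go through; with it, it is essentially the paper's proof.
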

\begin{proof}
For $\psi\in \cusp(L)$, we know by \cite{geck93} and \cite[8.6]{Lusztig84} that $\psi$ extends to some $\wt{\psi}\in\irr(N_\psi)$.  Then Gallagher's theorem \cite[6.17]{isaacs} implies that the characters of the form $\wt{\psi}\eta$, where $\eta$ ranges through all members of $\irr(N_\psi/L)$, are all of the characters of $N_\psi$ above $\psi$.  Clifford theory (see \cite[6.11]{isaacs}) then implies $\ind_{N_\psi}^N (\wt{\psi}\eta)$ is irreducible for $\eta\in\irr(N_\psi/L)$ and that the set $\irr(N|\psi)$ is comprised of the characters of this form.  Then since $\wt{b}=b^N$ is the unique block of $N$ above $b$ by \prettyref{lem:uniqueblock}(b), we see $\{\ind_{N_\psi}^N (\wt{\psi}\eta) \mid\psi\in\cusp(b), \eta\in\irr(N_\psi/L)\}$ is a subset of $\irr(\wt{b})$.  Here for each $\psi\in\cusp(b)$, we have fixed an extension $\wt{\psi}$ of $\psi$ to $N_\psi$.  

Conversely, if $\varphi\in\irr(\wt{b})$, then the constituents of $\res^N_L\varphi$ lie in $N$-conjugates of $b$, and hence $\res^N_L\varphi$ must contain $\psi^g$ as a constituent for some $\psi \in \irr(b)$ and $g\in N$.  But this means that $\res^N_L\varphi$ also contains $\psi$ as a constituent.  Let $\psi$ lie in the Harish-Chandra series of $L$ indexed by the cuspidal pair $(M,\mu)$.  Then note that $b_M(\mu)^G=b^G$ by \prettyref{prop:HCblock} and the transitivity of Harish-Chandra induction.  Further, applying \cite[4.1]{CabanesEnguehard99} and \prettyref{prop:HCblock} to $b_M(\mu)$, we see that $b_M(\mu)=R_{M_1}^M(b_{M_1}(\mu_1))$ for some cuspidal pair $(M_1, \mu_1)$ of $M$ such that $\mu_1\in\mathcal{E}(M_1,\ell')$.  But then again by transitivity of Harish-Chandra induction, we have $b_G(L,\la)=b_G(M_1,\mu_1)$, and hence $(L,\la)$ is $G$-conjugate to $(M_1,\mu_1)$, by \prettyref{lem:unique cusp}.  Then $L=M$ and $\psi\in\cusp(L)$, completing the proof.
\end{proof}

\subsection{Height-Zero Characters and the Map $\Omega$ }\label{sec:hz}
Keep the notation and assumptions from the previous section, and let $\irr(B, L)$ denote the subset of $\irr(B)$ comprised of characters of the form $R_L^G(\psi)_\eta$ for $\psi\in\cusp(b)$ and $\eta\in \irr(W(\psi))$, where $W(\psi):=N_\psi/L$.  Recall that for each $\psi\in\cusp(b)$, we have fixed an extension $\wt{\psi}$ of $\psi$ to $N_\psi$. Recall that $\wt{b}$ is the unique $\ell$-block of $N$ above $b$.

\begin{definition}\label{def:Omega}
Let $\Omega\colon \irr(B, L)\rightarrow \irr(\wt{b})$ be defined via
\begin{equation}\label{eq:defOmega}
\Omega(R_L^G(\psi)_\eta)= \ind_{N_\psi}^N (\wt{\psi}\eta)
\end{equation}   for each $\psi\in\cusp(b)$ and $\eta\in \irr(W(\psi))$ (see Lemma~\ref{lem:blockN}).
\end{definition}

In this section, we aim to show that $\Omega$ induces a bijection
$\Omega\colon \irr_0(B)\rightarrow \irr_0(\wt{b})$, where we write $\irr_0(c)$ for the set of height-zero characters of a block $c$.  Recall that for a finite group $H$ and $c\in\Bl(H)$, a character $\chi$ in $\irr(c)$ satisfies $\chi\in\irr_0(c)$ if and only if $\chi(1)_\ell=|H|_\ell/|D(c)|$.  That is, the $\ell$-part of $\chi(1)$ is as small as possible.  Let $\chi\in\irr_0(B)$ and write $\chi=R_L^G(\psi)_\eta$, so $\Omega(\chi)= \ind_{N_\psi}^N (\wt{\psi}\eta)$.

The next lemma describes $\irr_0(\wt{b})$.

\begin{lemma}\label{lem:localht0}
 Let $\psi\in\cusp(b)$ and $\eta\in\irr(W(\psi))$.  Then $\ind_{N_\psi}^N (\wt{\psi}\eta)\in\irr_0(\wt{b})$ if and only if all of the following hold:
\begin{itemize}
\item $\psi\in\irr_0(b)$;
\item $\eta(1)_\ell=1$; and
\item $[N_b:N_\psi]_\ell=1$.
\end{itemize}
\end{lemma}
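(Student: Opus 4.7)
Let $\chi := \operatorname{Ind}_{N_\psi}^N(\widetilde{\psi}\eta)$. Since $\widetilde{\psi}$ extends $\psi$ to $N_\psi$ and $\eta$ is inflated from $W(\psi)=N_\psi/L$, its degree factors as
\[
\chi(1) \;=\; [N:N_\psi]\cdot\psi(1)\cdot\eta(1).
\]
The condition $\chi\in\operatorname{Irr}_0(\widetilde{b})$ is precisely $\chi(1)_\ell = |N|_\ell/|D(\widetilde{b})|$, so the whole task reduces to computing $|D(\widetilde{b})|$ and comparing $\ell$-parts.

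The key intermediate step is to establish
\[
|D(\widetilde{b})| \;=\; |D(b)|\cdot [N_b:L]_\ell.
\]
By \prettyref{lem:uniqueblock}(a), $C_N(D(b))\leq L$, and by \prettyref{lem:uniqueblock}(b), $\widetilde{b}$ is the unique block of $N$ above $b$. The Fong--Reynolds correspondence therefore yields a unique block $c$ of $T:=N_b$ covering $b$, with $c^N = \widetilde{b}$ and $D(c)=D(\widetilde{b})$. Since $b$ is $T$-stable by construction and $C_T(D(b))\leq C_N(D(b))\leq L$, the standard theory of defect groups of blocks over (uniquely covered) blocks of a normal subgroup (cf.\ \cite[Ch.\ 9]{NavarroBlocks}, applied inside $T$) gives $D(c)\cap L = D(b)$ and $D(c)L/L\in\operatorname{Syl}_\ell(T/L)$, whence the displayed formula.

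Combining this with $|N|_\ell = [N:L]_\ell\cdot |L|_\ell$ and the factorization $[N:N_\psi]_\ell = [N:N_b]_\ell\cdot [N_b:N_\psi]_\ell$, a direct cancellation reduces the height-zero equation $\chi(1)_\ell = |N|_\ell/|D(\widetilde{b})|$ to
\[
[N_b:N_\psi]_\ell\cdot \psi(1)_\ell\cdot \eta(1)_\ell \;=\; |L|_\ell/|D(b)|.
\]
By general block theory, the height of $\psi$ in $b$ is non-negative, so $\psi(1)_\ell\geq |L|_\ell/|D(b)|$ with equality iff $\psi\in\operatorname{Irr}_0(b)$; the factors $\eta(1)_\ell$ and $[N_b:N_\psi]_\ell$ are at least $1$. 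Hence the left-hand side is at least the right-hand side, with equality if and only if each factor is minimal, which is precisely the three conditions claimed.

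\textbf{Anticipated main obstacle.} The crux is justifying the defect-group formula $|D(\widetilde{b})| = |D(b)|\cdot [N_b:L]_\ell$. The precise form stated in \cite{NavarroBlocks} varies, so one has to verify carefully that our three inputs -- uniqueness of $\widetilde{b}$ above $b$, $T$-stability of $b$, and the centralizer condition $C_T(D(b))\leq L$ -- together force $D(\widetilde{b})L/L$ to be a full Sylow $\ell$-subgroup of $N_b/L$, rather than only a (possibly proper) $\ell$-subgroup. Once this is in hand, everything else is an elementary manipulation of $\ell$-parts.
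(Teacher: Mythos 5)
Your proof is correct and follows essentially the same route as the paper: establish $|D(\wt b)|=|D(b)|\cdot[N_b:L]_\ell$ via the Fong--Reynolds correspondence and the defect-group formula for the unique covering block (the paper cites \cite[9.14, 9.17]{NavarroBlocks} together with Lemma~\ref{lem:uniqueblock}), then reduce height zero for $\chi$ to the $\ell$-part equation $\psi(1)_\ell\,\eta(1)_\ell\,[N_b:N_\psi]_\ell=[L:D(b)]_\ell$, and conclude by noting each factor is bounded below. The only cosmetic difference is in the ``only if'' direction: the paper passes through the intermediate character $\tau=\ind_{N_\psi}^{N_b}(\wt\psi\eta)$ and its block $b_{N_b}$, bounding $\psi(1)_\ell\eta(1)_\ell$ against $[N_\psi:D(b_{N_\psi})]_\ell$ and invoking \cite[9.18]{NavarroBlocks} to pull the height-zero property of $\wt\psi$ down to $\psi$, whereas you argue directly from the equation by the non-negativity of heights in $b$; both are valid, and yours is slightly more streamlined.
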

\begin{proof}
Let $\chi'$ denote the character $\ind_{N_\psi}^N (\wt{\psi}\eta)$ and write $\tau$ for the irreducible character $\tau:=\ind_{N_\psi}^{N_b} (\wt{\psi}\eta)$ of $N_b$, so that $\ind_{N_b}^N\tau=\chi'$.  Write $b_{N_\psi}$ and $b_{N_b}$ for the blocks of $\wt{\psi}$ and $\tau$, respectively, which are the unique blocks of $N_\psi$ and $N_b$, respectively, above $b$ by \prettyref{lem:uniqueblock}(b).  
By \cite[6.2]{NavarroBlocks}, we have $b_{N_b}=(b_{N_\psi})^{N_b}$ and $\wt{b}=(b_{N_\psi})^N=(b_{N_b})^N$.  By \cite[9.14]{NavarroBlocks}, the defect groups of $b_{N_b}$ and $\wt{b}$ are the same and the height of $\tau$ is the same as the height of $\chi'$.  So $\chi'$ is of height zero if and only if $\tau$ is, and hence it suffices to show that $\tau$ is of height zero if and only if the claimed conditions hold.

Now, by \cite[9.17]{NavarroBlocks},  \prettyref{lem:uniqueblock}(b) implies that \[|D({\wt{b}})|=|D({b_{N_b}})|=|D(b)|\cdot[N_b:L]_\ell\quad \hbox{ and} \quad |D({b_{N_\psi}})|=|D(b)|\cdot[N_\psi :L]_\ell,\] so 
\[|D({b_{N_b}})|=|D({b_{N_\psi}})|\cdot [N_b:N_\psi]_\ell.\]

Then $\tau$ has height zero if and only if $\tau(1)_\ell=[N_b:D({b_{N_b}})]_\ell=\frac{|N_b|_\ell}{|D(b)|\cdot [N_b:L]_\ell}=[L:D(b)]_\ell$.  
But this happens if and only if 
\[\eta(1)_\ell \cdot\psi(1)_\ell \cdot [N_b:N_\psi]_\ell=[L:D(b)]_\ell.\]  
Hence, we immediately see that if the stated conditions hold, then $\tau$ has height zero.  Now assume conversely that $\tau$ has height zero.  
Then \[\psi(1)_\ell\cdot\eta(1)_\ell=[N_\psi:D({b_{N_b}})]_\ell=\frac{|N_\psi|_\ell}{|D({b_{N_\psi}})|\cdot[N_b:N_\psi]_\ell}\leq [N_\psi:D({b_{N_\psi}})]_\ell.\] 
But $\wt{\psi}\eta\in\irr(b_{N_{\psi}})$ implies $\psi(1)_\ell\cdot\eta(1)_\ell\geq [N_\psi:D({b_{N_\psi}})]_\ell,$ so it must be that $\wt{\psi}\eta\in\irr_0(b_{N_\psi})$ and $[N_b:N_\psi]_\ell=1$.  
Then since $\wt{\psi}(1)_\ell\cdot\eta(1)_\ell\geq \wt{\psi}(1)_\ell\geq [N_\psi:D({b_{N_\psi}})]_\ell,$ we also have $\wt{\psi}$  is of height zero and $\eta(1)_\ell=1$.   Finally, \cite[9.18]{NavarroBlocks} now implies that $\psi\in\irr_0(b)$ by applying \prettyref{lem:uniqueblock}(b) again.
\end{proof}

Next we describe the height-zero characters in $\irr(B,L)$.

\begin{lemma}\label{lem:globalht0}
Assume the conditions on $\ell$ from \prettyref{lem:normalizerD}. Let $\psi\in\cusp(b)$ and $\eta\in\irr(W(\psi))$. Then the character $\chi=R_L^G(\psi)_\eta\in\irr(B,L)$ has height zero if and only if all of the following hold:
\begin{itemize}
\item $\psi\in\irr_0(b)$;
\item $\eta(1)_\ell=1$; and
\item $[N_b:N_\psi]_\ell=1$.
\end{itemize}
\end{lemma}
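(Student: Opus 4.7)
The plan is to compute the $\ell$-parts of both sides of the defining condition $\chi(1)_\ell = [G:D(B)]_\ell$ and reduce to a clean product equation in $\psi(1)_\ell$, $\eta(1)_\ell$, and $[N_b:N_\psi]_\ell$, in complete parallel with the local computation carried out in \prettyref{lem:localht0}.

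First I would simplify $[G:D(B)]_\ell$. By \prettyref{lem:samedefect} one may take $D(B) = D(\wt b)$, and the argument near the start of the proof of \prettyref{lem:localht0} records that $|D(\wt b)| = |D(b)| \cdot [N_b:L]_\ell$. Using the factorizations $|G|_\ell = [G:N_\psi]_\ell \cdot [N_\psi:L]_\ell \cdot |L|_\ell$ and $[N_b:L]_\ell = [N_b:N_\psi]_\ell \cdot [N_\psi:L]_\ell$, this gives
$$[G:D(B)]_\ell = \frac{[G:N_\psi]_\ell \cdot [L:D(b)]_\ell}{[N_b:N_\psi]_\ell}.$$

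Next I would establish the degree formula
$$R_L^G(\psi)_\eta(1)_\ell = [G:N_\psi]_\ell \cdot \psi(1)_\ell \cdot \eta(1)_\ell.$$
This is the core analytic ingredient, and follows from the Howlett--Lehrer--Lusztig description of $\End_G(R_L^G(\psi))$ as a Hecke algebra on $W(\psi) = N_\psi/L$ with parameters that are powers of $q$. Because $\ell \mid (q-1)$, these parameters are $\equiv 1 \pmod \ell$, so the generic degree $d_\eta(q)$ of $\eta$, which specializes to $d_\eta(1) = \eta(1)$, has the same $\ell$-adic valuation as $\eta(1)$: the only cyclotomic factor in $d_\eta(q)$ capable of contributing to its $\ell$-part is $q-1$, and under the good-prime and $\ell \geq 5$ hypotheses of \prettyref{lem:normalizerD}, every other cyclotomic factor appearing in $d_\eta(q)$ is $\ell$-prime.

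Combining the two steps, $\chi \in \irr_0(B)$ becomes equivalent to
$$\psi(1)_\ell \cdot \eta(1)_\ell \cdot [N_b:N_\psi]_\ell = [L:D(b)]_\ell.$$
Since $\psi \in \irr(b)$ forces $\psi(1)_\ell \geq [L:D(b)]_\ell$ with equality exactly when $\psi \in \irr_0(b)$, and $\eta(1)_\ell, [N_b:N_\psi]_\ell \geq 1$, equality of the product forces each factor to attain its minimum, which gives precisely the three listed conditions. The main obstacle is the degree formula in the middle step: the bookkeeping in the first and last steps is routine arithmetic of $\ell$-parts that mirrors \prettyref{lem:localht0}, but pinning down $R_L^G(\psi)_\eta(1)_\ell$ requires the Hecke-algebra / generic-degree input and uses the hypothesis $\ell\mid(q-1)$ in an essential way, so one may prefer to extract the needed statement from the relevant parts of \cite{MalleSpathMcKay2} or the Howlett--Lehrer theory directly.
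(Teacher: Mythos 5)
Your proposal is correct and follows essentially the same route as the paper: reduce everything to the product equation $\psi(1)_\ell \cdot \eta(1)_\ell \cdot [N_b:N_\psi]_\ell = [L:D(b)]_\ell$, using the degree formula $\chi(1)_\ell = [G:N_\psi]_\ell\,\psi(1)_\ell\,\eta(1)_\ell$ and the defect-group identity $|D(\wt b)| = |D(b)|\,[N_b:L]_\ell$ from \prettyref{lem:localht0}. The paper obtains the degree formula from $\chi(1)=[G:L]_{p'}D_\chi(q)\psi(1)$ with $D_\chi(q)/D_\chi(1)\equiv 1 \pmod\ell$, citing \cite[(7.2)]{MalleSpathMcKay2} and \cite[6.3]{malleheightzero}; you gesture at the underlying Howlett--Lehrer--Lusztig/generic-degree picture, which is the same content but your gloss (``the only cyclotomic factor capable of contributing is $q-1$'') is stated imprecisely --- the clean statement is exactly the one in \cite[6.3]{malleheightzero}, which you would in any case cite, so this is a matter of phrasing rather than a gap.

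One place where you actually simplify the paper's argument: for the converse direction, the paper first shows $\eta(1)_\ell=1$ by a separate comparison with $\chi'=R_L^G(\psi)_{1_{W(\psi)}}$ (which lives in the same block by \prettyref{prop:HCblock}), and only then plugs back into the product equation. You instead observe directly that $\psi(1)_\ell\geq[L:D(b)]_\ell$ always, and that $\eta(1)_\ell,[N_b:N_\psi]_\ell\geq 1$, so the equality of the product with $[L:D(b)]_\ell$ forces each factor to its minimum at once. That is a cleaner derivation and makes the auxiliary $\chi'$ unnecessary; the two arguments reach the same conclusion.
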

\begin{proof}
By \cite[(7.2)]{MalleSpathMcKay2} or \cite[4.2]{malleheightzero}, we have 
\[\chi(1)=[G:L]_{p'}D_\chi(q)\psi(1)\]
where $D_\chi$ is a rational function with numerator and denominator prime to $X-1$ and $D_\chi(1)=\eta(1)/|W(\psi)|$.  Further, by \cite[6.3]{malleheightzero}, $D_\chi(q)/D_\chi(1)\equiv 1\mod \ell$ and is a rational number with numerator and denominator prime to $\ell$.  Hence we have
\[\chi(1)_\ell=[G:L]_{\ell}\cdot\eta(1)_\ell\cdot\psi(1)_\ell/|W(\psi)|_\ell=[G:L]_{\ell}\cdot\eta(1)_\ell\cdot \psi(1)_\ell/[N_\psi :L]_\ell=[G:N_\psi]_\ell\cdot\eta(1)_\ell\cdot\psi(1)_\ell.\]

 Now, applying \prettyref{lem:samedefect}, we see $\chi$ has height zero if and only if $\chi(1)_\ell=[G:D(B)]_\ell=[G:D(\wt{b})]_\ell$, if and only if 
\[[G:N_\psi]_\ell\cdot\eta(1)_\ell\cdot\psi(1)_\ell=[G:D({\wt{b}})]_\ell.\] But this occurs if and only if  
\[\eta(1)_\ell\cdot\psi(1)_\ell=[N_\psi:D({\wt{b}})]_\ell=\frac{|N_\psi|_\ell}{|D(b)|[N_b:L]_\ell}=\frac{[L:D(b)]_\ell}{[N_b:N_\psi]_\ell}.\]  Here the second equality holds by the proof of \prettyref{lem:localht0}.  

Hence we see that if the stated conditions hold, then $\chi$ has height zero.  Conversely, suppose that $\chi$ has height zero.  Then we must have $\eta(1)_\ell=1$, since $\chi(1)_\ell\geq \chi'(1)_\ell$, where $\chi'=R_L^G(\psi)_{1_{W(\psi)}}$, which lies in the same block by \prettyref{prop:HCblock}.  Then we have
\[[L:D(b)]_\ell\leq \psi(1)_\ell=\frac{[L:D(b)]_\ell}{[N_b:N_\psi]_\ell}\leq [L:D(b)]_\ell,\] implying equality throughout, so $\psi\in\irr_0(b)$ and  $[N_b:N_\psi]_\ell=1$.
\end{proof}

Next, we aim to show that $\irr_0(B)$ is exactly the set of height-zero characters in $\irr(B,L)$.  The following result is key in this direction.  

\begin{proposition}\label{prop:ht0cusp} Keep $(L,\lambda)$ as in Lemma~\ref{lem:unique cusp} with $B=b_G(L,\lambda)$ for $\ell$ a prime dividing $q-1$, not dividing $6[\Zent(\bg G)^F:\Zent^\circ(\bg G)^F]$ and $\geq 7$ if $\bg G$ has components of type ${\sf E}_8$.
If $\irr_0(B)$ contains a cuspidal character, then ${L}= {G}$.
\end{proposition}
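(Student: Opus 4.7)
The plan is to reduce the claim to showing that any cuspidal $\chi \in \irr_0(B)$ automatically lies in the $\ell$-regular rational Lusztig series $\mathcal{E}(G,\ell')$. Suppose this reduction has been achieved: then the pair $(G,\chi)$ is itself an $\ell'$-cuspidal pair of $G$ with
\[b_G(G,\chi)=b_G(\chi)=B=b_G(L,\lambda),\]
and since both $(G,\chi)$ and $(L,\lambda)$ are $\ell'$-cuspidal pairs parametrizing the same block $B$, \prettyref{lem:unique cusp} forces $(G,\chi)\sim_G (L,\lambda)$. As $L\leq G$ by construction, this immediately yields $L=G$.

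The main work is therefore to establish $\chi\in\mathcal{E}(G,\ell')$. By Brou\'e--Michel we have $B\subseteq\mathcal{E}_\ell(G,s)$ for the semisimple $\ell'$-element $s\in L^{*F^*}$ with $\lambda\in\mathcal{E}(L,s)$, so we may write $\chi\in\mathcal{E}(G,st)$ for some $\ell$-element $t\in\Cent_{G^*}(s)^{F^*}$; the goal is to force $t=1$. I would combine three ingredients: (i) Lusztig's Jordan decomposition of characters, yielding
\[\chi(1)\;=\;[G^*:\Cent_{G^*}(st)^{F^*}]_{p'}\cdot\chi_u(1)\]
for a unipotent character $\chi_u$ of $\Cent_{G^*}(st)^{F^*}$, which is moreover cuspidal since $\chi$ is; (ii) the description of the defect group from \cite[4.16]{CabanesEnguehard99}, determining $|D(B)|_\ell$ in terms of $|\Zent(L)_\ell|$ and the defect of $b$; and (iii) the height-zero equation $\chi(1)_\ell=[G:D(B)]_\ell$. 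A non-trivial $t$ strictly shrinks $\Cent_{G^*}(st)$ inside $\Cent_{G^*}(s)$, contributing a strict extra factor of $\ell$ to $[G^*:\Cent_{G^*}(st)^{F^*}]_\ell$ which cannot be absorbed by the defect data or by the cuspidal unipotent factor $\chi_u(1)_\ell$; the resulting inequality violates (iii).

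The main obstacle is making this degree-tracking rigorous. One must verify, under the stated good-prime assumptions ($\ell\geq 5$, $\ell\geq 7$ in type ${\sf E}_8$, $\ell$ good, $\ell\nmid 6[\Zent(\bg G)^F:\Zent^\circ(\bg G)^F]$), that no cancellation between the inflated index $[\Cent_{G^*}(s):\Cent_{G^*}(st)]_\ell$, the cuspidal-unipotent degree $\chi_u(1)_\ell$, and the defect $|D(B)|_\ell$ can occur for $t\neq 1$. This is precisely the sort of bookkeeping performed in \cite{MalleSpathMcKay2} and in Malle's height-zero paper; part of it is encapsulated already in the proof of \prettyref{lem:globalht0}, and the argument may be organized by first applying Jordan decomposition in $L$ to relate the putative cuspidal $\chi\in\irr_0(B)$ to characters of the form $R_L^G(\psi)_\eta$ from $\irr(B,L)$ analyzed in \prettyref{lem:globalht0}, which then contradict cuspidality of $\chi$ as soon as $L\lneq G$ by Frobenius reciprocity for $R_L^G$ and ${}^\ast R_L^G$.
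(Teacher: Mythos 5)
Your first paragraph identifies the right destination—Lemma~\ref{lem:unique cusp} plus the fact that a cuspidal height-zero $\chi\in\mathcal E(G,\ell')$ would make $(G,\chi)$ an $\ell'$-cuspidal pair for $B$—but the reduction target ``\emph{show $\chi\in\mathcal E(G,\ell')$}'' is strictly stronger than what is true. If $\chi\in\mathcal E(G,st)$ with $t$ a non-trivial \emph{central} $\ell$-element of $\bg G^*$, then $\chi\notin\mathcal E(G,\ell')$, yet twisting by $\hat t^{-1}$ still yields a cuspidal character in $\irr(B)\cap\mathcal E(G,\ell')$ and the proposition's conclusion still follows; this is precisely step (a) of the paper's proof. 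Moreover, when $\bg L=\bg G$ the conclusion holds but your claimed membership fails: step (d) shows \emph{every} character of $B=b_G(G,\lambda)$ is then cuspidal, so any height-zero $\chi\in\irr(B)$ outside $\{\lambda\}$ is a cuspidal height-zero character with $\chi\notin\mathcal E(G,\ell')$ (take $\bg G$ a torus for the simplest instance). So as a chain of implications your plan cannot close; the correct intermediate target is ``$t\in\Zent(\bg G^*)$,'' not ``$t=1$.''

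The degree-tracking heuristic in your second paragraph is also not sound as stated, and hides the two ideas the paper actually needs. Your claim that ``a non-trivial $t$ strictly shrinks $\Cent_{G^*}(st)$'' is false for central $t$ (no shrinking at all), and for non-central $t$ there genuinely can be cancellation between $[\Cent_{G^*}(s):\Cent_{G^*}(st)]_\ell$, the cuspidal-unipotent degree, and the defect: indeed Remark~\ref{rem:ht0cusp}(b) exhibits cuspidal characters in $\irr(b_G(L,\lambda))$ for $L\lneq G$—it is precisely the height-zero hypothesis that rules them out, and verifying this is the entire content of the proof. What is really needed is (i) the $\bg G=\bg G_{\bf a}\bg G_{\bf b}$ decomposition of \cite[22.4]{cabanesenguehard}: for components of type $\bg G_{\bf b}$ one uses that $\Cent_{\bg G^*}(t)$ embeds in a proper $1$-split Levi, immediately contradicting cuspidality, while for untwisted type $\sf A$ the key structural fact is that only tori carry cuspidal unipotent characters; and (ii) in the type-$\sf A$ case, the Bonnaf\'e--Dat--Rouquier Morita equivalence, which is essential to convert the height-zero equation $\chi(1)_\ell=[G:D(B)]_\ell$ into $\chi(1)_\ell=[{\bg G}^F:{\bg C}^F]_\ell$, and a bound by the rational fundamental group $|(\Zent(\bg C)/\Zent^\circ(\bg C))^F|$ to conclude $\bg C$ is a torus. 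Neither ingredient appears in your sketch. Finally, the fallback ``relate $\chi$ to the characters $R_L^G(\psi)_\eta$ analyzed in Lemma~\ref{lem:globalht0}'' is circular: that \emph{every} height-zero character of $B$ lies in $\irr(B,L)$ is Corollary~\ref{cor:blockB}, whose proof uses the very proposition at hand.
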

\begin{proof} 
Let $\chi\in\Irr_0(B) $ be assumed to be cuspidal. Denote $s\in\bg{L}^*{}^F$ semi-simple and $\ell '$ such that $\la\in\cusp (L)\cap \E (\bg{L}^F,s)$. Then all components of $R_L^G(\la)$ are in $\E(G,s)$. So $B$ meets $\E(G,s)$ and is therefore included in  $\E_\ell (G ,s)$ (see \cite[9.12]{cabanesenguehard}). So we have $\chi\in\E (G ,st)$ where  $t\in \Cent_{\bg{G}^*}(s)^F_\ell$. We know that $\Cent^\circ_{{\bg{G}}^*}(t)$ is a Levi subgroup (see \cite[13.16(ii)]{cabanesenguehard}), let ${\bg G}(t)$ be an $F$-stable Levi subgroup of $\bg G$ in duality with it. Denote by $\hat{t}$ the linear character of ${\bg G}(t)^F$ associated with $t$ by duality (see \cite[13.30]{dignemichel}{}). For an $F$-stable Levi subgroup $\bg M$ of $\bg G$, we write $R_{ \bg M}^{\bg G}$ to denote Deligne-Lusztig induction. If $\bg L$ is a split Levi subgroup of $(\bg G,F)$ then $R_{ \bg L}^{\bg G}$ coincides with Harish-Chandra induction previously denoted by $R_L^G$. 

(a) Assume $t$ is central in $\bg{G}^*$, so that ${\bg G}(t)={\bg G}$. Then $\hat t^{-1}\chi\in \Irr (B)$ since $\hat t^{-1}$ has $\ell$-order. Moreover $\hat t^{-1}\chi\in \E(G ,s)$ by \cite[13.30]{dignemichel}. Using now the description of $\irr(B)\cap\E (G,\ell ')$ (see \cite[4.1(b)]{CabanesEnguehard99}), $\hat t^{-1}\chi$ is a component of $R^{\bg{G}}_{\bg{L}}\la$. Then $\chi$ is a component of $\hat t R_{\bg{L}}^{\bg{G}} (\la)=R_{\bg{L}}^{\bg{G}} (\res^G_L(\hat t)\la)$ and cuspidality implies that ${\bg{L}}={\bg{G}}$.

\medskip

Let us now use the decomposition ${\bg{G}} ={\bg{G}}_{\bf a}{\bg{G}}_{\bf b}$ from \cite[22.4]{cabanesenguehard}. In our case, this means that ${\bg{G}}_{\bf a}$ is generated by $\Zent^\circ ({\bg{G}})$ and the $F$-stable components ${\bg{G}}_i$ of $[{\bg{G}} ,{\bg{G}}]$ such that ${\bg{G}}_i^F$ is of type ${\sf A}_{n_i}(q^{m_i})$ (untwisted) with $m_i,n_i\geq 1$, while ${\bg{G}}_{\bf b}$ is generated by the other components of $[{\bg{G}},{\bg{G}}]$.

\medskip

(b) Assume ${\bg{G}} ={\bg{G}}_{\bf b}$. Then any non central $\ell$-element $t\in {\bg{G}}^*{}^F$ is such that $\Cent_{{\bg{G}}^*}(t)$ embeds in a proper 1-split Levi subgroup ${\bg{C}}^*$ of ${\bg{G}}^*$ thanks to \cite[13.19 and 22.2]{cabanesenguehard}. On the other hand $\chi =R_{\bg{C}}^{\bg{G}}(\chi ')$ for some $\chi '\in\E({\bg{C}}^F,st)$ by \cite[13.25(ii)]{dignemichel}. This contradicts cuspidality, so indeed $t$ is central and case (a) gives our claim.

(c) Assume ${\bg{G}} ={\bg{G}}_{\bf a}$. Let  ${\bg{T}}^*:=\Cent^\circ_{{\bg{G}}^*}(st)$. This is a Levi subgroup and it is included in no proper $1$-split Levi while having a unipotent cuspidal character corresponding with $\chi$ by Jordan decomposition (see for instance \cite[1.10]{CabanesEnguehard99}). In type untwisted $\sf A$, only tori can have a unipotent cuspidal character (hence trivial). So this implies that ${\bg{T}}^*$ is a Coxeter torus of ${\bg{G}}^*$ and $\chi$ is a component of $ R_{\bg{T}}^{\bg{G}}(\wh{st})$. In particular $$\chi (1)_\ell =|{\bg{G}}^F|_\ell/|\Cent_{{\bg{G}}^*}(st)^F|_\ell\eqno(\dagger)$$ by \cite[13.24]{dignemichel}. On the other hand, by the main theorem of \cite{BonnafeDatRouquier} the block $B$ as an algebra over a finite extension of $\Z_\ell$ is Morita equivalent to a block $B_M$ of a subgroup $M$ of $G$ where $ {\bg{C}}^F\lhd M$ with ${\bg{C}}^*=\Cent^\circ_{{\bg{G}}^*}(s)$ and $M/{\bg{C}}^F\cong \Cent_{{\bg{G}}^*}(s)^F/\Cent^\circ_{{\bg{G}}^*}(s)^F$ an abelian group with order prime to $\ell$ (see for instance \cite[13.15(i)]{dignemichel}). Moreover $B_M$ covers a block $B_C$ of ${\bg{C}}^F$ with $\irr(B_C)\subseteq \E_\ell (\bg{C}^F,s)$. Then $\hat s^{-1}B_C$ is a unipotent block. We have ${\bg{C}}={\bg{C}}_{\bf a}$ (see comment after \cite[2.3]{CabanesEnguehard94}) so there is only one $\bg{C}^F$-conjugacy class of unipotent cuspidal pairs $(\bg{L}_C,\la_C)$ in $\bg{C}^F$ by \cite[3.3(i)]{CabanesEnguehard94}. Therefore $\hat s^{-1}B_C$ is the principal block of $\bg{C}^F$, so both $B_C$ and $B_M$ have maximal defect. So height zero characters of $B_M$ have degree prime to $\ell$. Now the Morita equivalence and the fact that $\chi$ has height zero imply that $$\chi(1)_\ell = [{\bg{G}}^F:M]_\ell =[{\bg{G}}^F:{\bg{C}}^F]_\ell \ . \eqno(*)$$ Combining ($\dagger$) and (*) implies that $[{\bg{C}}^F:{\bg{T}}^F]_\ell =|A_{{\bg{G}}^*}(st)^F|_\ell$ where we use the standard notation $A_{{\bg{G}}^*}(x)=\Cent_{{\bg{G}}^*}(x)/\Cent^\circ_{{\bg{G}}^*}(x)$ for $x\in {{\bg{G}}^*}$. One has $$|A_{{\bg{G}}^*}(st)^F|_\ell = |A_{{\bg{C}}^*}(t)^F|_\ell\ ,$$ again because $A_{{\bg{G}}^*}(s)$ is an $\ell '$-group. Using now \cite[13.14(ii)]{dignemichel}, we get that $$[{\bg{C}}^F:{\bg{T}}^F]_\ell \text{ divides } |(\Zent({\bg{C}})/\Zent^\circ ({\bg{C}}))^F|.\eqno(**)$$

Let us show that ($**$) implies ${\bg{C}}={\bg{T}}$.
We have ${\bg{C}}^F/{\bg{T}}^F =({\bg{C}}/{\bg{T}})^F$. The variety ${\bg{C}}/{\bg{T}}$
can be seen in the adjoint group of ${\bg{C}}$. Assuming ${\bg{C}}^F={\bg{C}}_{\bg{a}}^F$ has type $\prod_i{\sf A}_{n_i-1}(q_i)$ ($n_i\geq 2$), we get  $$[{\bg{C}}^F:{\bg{T}}^F]_\ell =\prod_i(q_i^{n_i-1}-1)_\ell (q_i^{n_i-2}-1)_\ell \dots (q_i-1)_\ell .$$ Notice that $q_i$ is a power of $q$, hence $\equiv 1$ mod $\ell$. On the other hand $|(\Zent({\bg{C}})/\Zent^\circ ({\bg{C}}))^F|$ is a divisor of the order of the rational fundamental group (see proof of \cite[13.12(iv)]{cabanesenguehard}) which is $\prod_i(q_i-1,n_i)$. So ($**$) implies $$\prod_i(q_i^{n_i-1}-1)_\ell  \dots (q_i-1)_\ell \text{ divides } \prod_i(q_i-1,{n_i}) .$$ For any $i$ involved, $n_i\geq 2$, so the LHS above is a multiple of $\prod_i(q_i-1)_\ell$. This implies on the RHS that for any $i$ we must have $\ell\mid n_i$ and therefore $n_i\geq \ell > 3$. But then the LHS is a multiple of $\prod_{i}\ell (q_i-1)_\ell$ and this can't divide the RHS unless both products are empty. We then get that $\bg{C}$ is a torus, hence equals $\bg{T}$.

We now have $\Cent^\circ_{{\bg{G}}^*}(s)={\bg{T}}^*$, a Coxeter torus of ${\bg{G}}^*$ and therefore all elements of $\E (G,s)$ are cuspidal, again by \cite[1.10]{CabanesEnguehard99}. But the elements of $\irr(B)\cap\E (G,s)$ are the components of $R_{\bg{L}} ^{\bg{G}}(\la)$ as was recalled before. This forces ${\bg{L}} ={\bg{G}}$.

(d) Before going into the general case, let us notice that if $\bg{L}=\bg{G}=\bg{G}_{\bg a}$ or $\bg{L}=\bg{G}=\bg{G}_{\bg b}$ then all elements of $\irr(B)$ are cuspidal. In the first case this is because as said at the beginning of (c) we must have that $\Cent^\circ_{{\bg{G}}^*}(s)$ is a Coxeter torus of ${{\bg{G}}^*}$ but then it is also the case for any $\Cent^\circ_{{\bg{G}}^*}(st)$ with $t=(st)_\ell $  and therefore any element of $\E_\ell (G,s)$ is cuspidal by \cite[1.10]{CabanesEnguehard99}. In the second case, by \cite[2.8]{CabanesEnguehard99} an element of $\irr(B)$ has to be a constituent of some $R_{\bg{G}(t)}^{\bg{G}}(\hat t\chi_t)$ with $t\in \Cent_{\bg{G}^*}(s)_\ell^F$, ${\bg{G}(t)}$ a Levi subgroup of ${\bg{G}}$ in duality with $\Cent_{\bg{G}^*}^\circ(t)$, $\chi_t\in\E({\bg{G}(t)}^F,s)$ and all components of $R_{\bg{G}(t)}^{\bg{G}}(\chi_t)$ are in $\irr(B)\cap\E(G,s)$. By the description of $\irr(B)\cap\E(G,s)$ (see \cite[4.1(b)]{CabanesEnguehard99}), the last requirement implies that $R_{\bg{G}(t)}^{\bg{G}}(\chi_t)$ is a multiple of the cuspidal character $\chi$. This forbids that ${\bg{G}(t)}$, or equivalently $\Cent_{\bg{G}^*}^\circ(t)$, embed in a proper 1-split Levi subgroup. As said in (b), this implies that $t$ is central. Therefore $R_{\bg{G}(t)}^{\bg{G}}(\hat t\chi_t)=\hat tR_{\bg{G}(t)}^{\bg{G}}( \chi_t)$ is a multiple of $\hat t\chi$ hence cuspidal.

(e) Let us now return to the proof of the statement of the proposition. We look at the general case where ${\bg{G}} ={\bg{G}}_{\bf a}{\bg{G}}_{\bf b}$ and $\chi\in\irr_0(B)$ is cuspidal. Let $H:={\bg{G}}_{\bf a}^F{\bg{G}}_{\bf b}^F$, a normal subgroup of ${\bg{G}}^F$ with abelian $\ell '$ factor group (see \cite[22.5(i)]{cabanesenguehard}). It is a group with split BN-pair given by intersection of the one of ${\bg{G}}^F$. The standard parabolic subgroups correspond in the same way with same radical, and therefore $$^*R^H_{M\cap H}\circ \res^G_H = \res^M_{M\cap H}\circ{}^*R^G_M\text{\ \ \ on  \ }\Z\irr(G)$$ for each split Levi subgroup $M=\bg{M}^F$ of $G$, where $^*R$ denotes Harish-Chandra "restriction" (see \cite[Ch. 4]{dignemichel}, \cite[3.11]{cabanesenguehard}). One deduces easily that the restriction of $\chi$ to $H$ is a sum of cuspidal characters. Let us choose $\chi '\in\irr(H\mid \chi)$ and let $B'$ be its block.  Since $G/H$ is an abelian $\ell '$-group, $B$ and $B'$ have a common defect group (see for instance \cite[9.26]{NavarroBlocks}). By Clifford theory, $\chi(1)_\ell =\chi'(1)_\ell $ so  $\chi '\in\irr_0(B')$. Now $H$ is also the quotient of ${\bg{G}}_{\bf a}^F\times {\bg{G}}_{\bf b}^F$ by a central subgroup $H=({\bg{G}}_{\bf a}^F\times {\bg{G}}_{\bf b}^F)/Z$ where $Z\cong \Zent({\bg{G}}_{\bf a}^F)\cap Z ({\bg{G}}_{\bf b}^F)=\Zent({\bg{G}}_{\bf a}\cap {\bg{G}}_{\bf b})^F$ is also $\ell '$ by \cite[22.5(i)]{cabanesenguehard}. The blocks and characters of $H$ can be seen as blocks and characters of ${\bg{G}}_{\bf a}^F\times {\bg{G}}_{\bf b}^F$ with $Z$ in their kernel. This obviously preserves cuspidality. So $B'$ corresponds to a block $B''$ of ${\bg{G}}_{\bf a}^F\times {\bg{G}}_{\bf b}^F$ with a character $\chi ''$, corresponding to $\chi '$, that is of height zero and cuspidal. By (b) and (c) above we have $B''=b_{{\bg{G}}_{\bf a}^F\times {\bg{G}}^F_{\bf b}}({\bg{G}}_{\bf a}^F\times {\bg{G}}_{\bf b}^F ,\la_0)$ for some $\la_0\in\cusp ({{\bg{G}}_{\bf a}^F\times {\bg{G}}^F_{\bf b}})$. Now (d) implies that all characters of $B''$ are cuspidal. Therefore the same is true for $B'$ and $B$ as discussed before. Then all components of $R^{\bg{G}}_{\bg{L}}(\la)$ have to be cuspidal. This clearly implies ${\bg{L}}={\bg{G}}$.
\end{proof}

\begin{remark}\label{rem:ht0cusp}
(a) It is easy to deduce from the above proof that the $\ell$-blocks of the form $B=b_G({G},\la)$ with $\la\in\cusp(G)\cap\E(G,\ell ')$, have only cuspidal characters, i.e. $\irr(B)\subseteq \cusp(G)$.

(b) There are indeed $\ell$-blocks $b_G(L ,\la)$ (notation of \cite{CabanesEnguehard99}) with $\ell$ satisfying the hypotheses of Proposition~\ref{prop:ht0cusp} and $\bg{L}\not= \bg{G}$ but with cuspidal characters in $\irr(b_G(L ,\la))$. An example is as follows. Let $\bg{G}$ be GL$_\ell (\overline\F_q)$ with the ordinary Frobenius endomorphism $F$ such that $\bg{G}^F=\operatorname{GL}_\ell(q)$ and $\ell\mid (q-1)$. Let $\bg{L}$ be the diagonal torus, $L:={\bg L}^F$ and $\la =1_{L}$. Then $b_G(L ,1)$ is the principal block with $\E(G,1)\subseteq\irr(b_G(L ,\la))$ again by \cite[3.3(i)]{CabanesEnguehard94} and in fact $\irr(b_G(L ,\la))=\E_\ell(G,1)$ by \cite[9.12(ii)]{cabanesenguehard}. Let $\bg{T}$ be the Coxeter torus of $\bg{G}$ with $\bg{T}^F$ of order $q^\ell-1$. It is easy to find a regular element in $\bg{T}^F$ of order $(q^\ell -1)_\ell =\ell (q-1)_\ell$ and dually a regular character $\theta\in\irr(\bg{T}^F)$ of multiplicative order a power of $\ell$. Then $R_{\bg{T}}^{\bg{G}}\theta$ is up to a sign a cuspidal character and an element of $\E_\ell (G,1)$. So it is also an element of $\irr(b_G( {L},1_L))$, though not of height zero.

\end{remark}

\begin{corollary}\label{cor:blockB}
Let $\bg{L}$ be a split Levi subgroup of a reductive algebraic group $\bg{G}$ and let $\lambda$ be a cuspidal character of $L:=\bg{L}^F$.  Let $\ell$ be a prime dividing $q-1$ such that $\ell\geq 5$, $\ell\geq 7$ if $\bg{G}$ has a component of type ${\sf E}_8$, and $\ell\nmid [\Zent(\bG{G})^F:\Zent^\circ(\bg{G})^F]$.   Suppose $\la\in\mathcal{E}(L,\ell')$ and let $b=b_L(\la)$ be the $\ell$-block of $L$ containing $\la$ and $B=b^G$ be the $\ell$-block of $G=\bg{G}^F$ containing the irreducible components of $R_L^G(\la)$ (see Proposition~\ref{prop:HCblock}).  Then
{\[\irr_0(B)=\irr_0(b^G)\subseteq\{R_L^G(\psi)_\eta \mid\psi\in\irr_0(b) \hbox{ cuspidal}\ ; \eta\in\irr(W(\psi))\}.\] }
\end{corollary}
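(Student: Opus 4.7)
The equality $B=b^G$ is Proposition~\ref{prop:HCblock}. For the inclusion, my plan is to show that any $\chi\in\irr_0(B)$ lies in $\irr(B,L)$ in the sense of \prettyref{sec:hz}, after which Lemma~\ref{lem:globalht0} immediately exhibits $\chi$ in the required form with cuspidal $\psi\in\irr_0(b)$.

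So fix $\chi\in\irr_0(B)$. By Harish-Chandra theory, write $\chi=R_M^G(\mu)_\nu$ for some cuspidal pair $(M,\mu)$ of $G$ (unique up to $G$-conjugacy) and some $\nu\in\irr(W_G(M,\mu))$, where $W_G(M,\mu):=\NNN_{\bg{G}}(\bg{M})^F_\mu/M$. By Proposition~\ref{prop:HCblock}, $b_M(\mu)^G=B$. Applying \cite[4.1]{CabanesEnguehard99} and Proposition~\ref{prop:HCblock} within $M$, there exists a cuspidal pair $(L',\la')$ of $M$ with $\la'\in\mathcal E(L',\ell')$ and $b_M(L',\la')=b_M(\mu)$. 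Transitivity of Harish-Chandra induction then gives $b_G(L',\la')=b_M(L',\la')^G=B=b_G(L,\la)$, and Lemma~\ref{lem:unique cusp} forces $(L',\la')$ to be $G$-conjugate to $(L,\la)$. After replacing $(M,\mu)$ by a $G$-conjugate, we may assume $L\leq M$ and $b_M(\mu)=b_M(L,\la)$.

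My plan is now to apply Proposition~\ref{prop:ht0cusp} inside $M$ to the pair $(L,\la)$ and the block $b_M(L,\la)$. Its hypotheses on $\ell$ transfer from $\bg{G}$ to $\bg{M}$ using \cite[3.3]{CabanesEnguehard99} for the centre condition and the fact that any Levi subgroup of $\bg{G}$ inherits its component types from $\bg{G}$. Granted this, once $\mu\in\irr_0(b_M(L,\la))$ has been verified, Proposition~\ref{prop:ht0cusp} forces $L=M$, whence $\chi=R_L^G(\mu)_\nu\in\irr(B,L)$, and Lemma~\ref{lem:globalht0} identifies $\mu$ as the desired cuspidal character in $\irr_0(b)$.

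The main obstacle is therefore the height-zero verification for $\mu$ in $b_M(L,\la)$. I would compare $\chi$ with $\chi':=R_M^G(\mu)_{1_{W_G(M,\mu)}}$: both characters lie in $B$ by Proposition~\ref{prop:HCblock}, and the degree formula \cite[7.2]{MalleSpathMcKay2} combined with \cite[6.3]{malleheightzero} yields $\chi(1)_\ell=\nu(1)_\ell\cdot\chi'(1)_\ell$. Since $\chi$ has minimal $\ell$-degree $[G:D(B)]_\ell$ in $B$, this forces $\nu(1)_\ell=1$ and shows $\chi'$ is also of height zero in $B$. The same degree formula then gives $\chi'(1)_\ell=[G:\NNN_{\bg{G}}(\bg{M})^F_\mu]_\ell\cdot\mu(1)_\ell=[G:D(B)]_\ell$, and the desired equality $\mu(1)_\ell=[M:D(b_M(L,\la))]_\ell$ should follow by comparing $|D(B)|$ with $|D(b_M(L,\la))|$ via the explicit description of the defect groups of Brauer-induced blocks $b_H(L,\la)$ from \cite[4.1,\,4.16]{CabanesEnguehard99} applied for $H=G$ and $H=M$, which expresses the ratio as the expected $\ell$-Sylow index of a relative normalizer of $(\bg{L},b_L(\la))$.
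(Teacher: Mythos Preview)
Your overall strategy coincides with the paper's: reduce to showing the cuspidal $\mu$ has height zero in $b_M(L,\lambda)$, then invoke Proposition~\ref{prop:ht0cusp} inside $M$ to force $L=M$, after which Lemma~\ref{lem:globalht0} finishes. The reduction via Lemma~\ref{lem:unique cusp} and transitivity is identical, and your derivation of $\nu(1)_\ell=1$ and $\mu(1)_\ell=[K_\mu:D(B)]_\ell$ (with $K:=\NNN_{\bg G}(\bg M)^F$) matches the paper's use of the degree formula.

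The one place you diverge is the last inequality $[K_\mu:D(B)]_\ell\leq[M:D(b_M(\mu))]_\ell$. You propose reading off $|D(B)|/|D(b_M(\mu))|$ from \cite[4.1, 4.16]{CabanesEnguehard99} in terms of relative normalizers of $(\bg L,b)$; but what you actually need is an index involving $K_\mu$, a stabilizer in the normalizer of $\bg M$, not of $\bg L$, and the translation between the two pictures is not as immediate as your ``should follow'' suggests. The paper instead stays in the $(M,\mu)$-picture: it notes that Lemmas~\ref{lem:uniqueblock}(b) and~\ref{lem:samedefect} adapt with $N$ replaced by $K$ and $b$ by $b_M(\mu)$, yielding a unique block $\wt{b_M}(\mu)$ of $K$ over $b_M(\mu)$ with $D(\wt{b_M}(\mu))\leq D(B)$ and $|D(\wt{b_M}(\mu))|=|D(b_M(\mu))|\cdot[K_{b_M(\mu)}:M]_\ell$ via \cite[9.17]{NavarroBlocks}. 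The chain
\[
\mu(1)_\ell=[K_\mu:D(B)]_\ell\leq[K_\mu:D(\wt{b_M}(\mu))]_\ell=\frac{[M:D(b_M(\mu))]_\ell}{[K_{b_M(\mu)}:K_\mu]_\ell}\leq[M:D(b_M(\mu))]_\ell
\]
then closes at once. Your route can doubtless be completed, but the paper's block-theoretic comparison is more self-contained and avoids unpacking the structural description of defect groups from \cite{CabanesEnguehard99}.
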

\begin{proof}
Let $\chi\in\irr_0(B)$.  Note that by \prettyref{lem:globalht0}, it suffices to show that $\chi$ is in the Harish-Chandra series $R_L^G(\psi)$ for some cuspidal $\psi\in\irr(b)$.  We may assume that $L\neq G$.  

We know $\chi$ must be a constituent of some $R_M^G(\mu)$ for $\mu$ a cuspidal character of a split Levi $M:=\bg{M}^F$ of $G$.   Write $b_M(\mu)$ for the block of $M$ containing $\mu$.  By \prettyref{prop:HCblock}, all constituents of $R_M^G(\mu)$ lie in the same block, $b_{M}(\mu)^G$.  Then it must be that $b_L(\la)^G=B=b_{M}(\mu)^G$.  Further, applying \cite[4.1]{CabanesEnguehard99} and \prettyref{prop:HCblock} to $b_M(\mu)$, we see that $b_M(\mu)=R_{M_1}^M(b_{M_1}(\mu_1))$ for some cuspidal pair $(M_1, \mu_1)$ of $M$ such that $\mu_1\in\mathcal{E}(M_1,\ell')$.  But then by transitivity of Harish-Chandra induction, we have $b_G(L,\la)=b_G(M_1,\mu_1)$, and hence $(L,\la)$ is $G$-conjugate to $(M_1,\mu_1)$, by \prettyref{lem:unique cusp}, and we may assume $L\leq M$. 

 Now, note that the arguments in Lemmas \ref{lem:uniqueblock}(b) and \ref{lem:samedefect} still hold in the case with $N$ replaced with $K:=\NNN_{\bg{G}}(\bg{M})^F$ and $b$ replaced with $b_M(\mu)$, but with the statement $D({\wt{b}})=D(B)$ replaced with $D({\wt{b_M}(\mu)})\leq D(B)$. 
  (Here $\wt{b_{M}}(\mu)$ is the block of $K$ above $b_M(\mu)$.) Then we may argue as in \prettyref{lem:globalht0} to see that $\mu$ must have height zero if $\chi$ does.  Indeed, letting $K_{b_M(\mu)}$ denote the stabilizer of $b_M(\mu)$ in $K$, we have in this case 
  
 \[[M:D({b_M(\mu)})]_\ell\leq \mu(1)_\ell=\frac{[G:D(B)]_\ell}{[G:K_\mu]_\ell}=[K_\mu:D(B)]_\ell\leq[K_\mu:D({\wt{b_M}(\mu)})]_\ell\]
 \[=\frac{|K_\mu|_\ell|M|_\ell}{|D({b_M(\mu)})|_\ell|K_{b_M(\mu)}|_\ell}=\frac{[M:D({b_M(\mu)})]_\ell}{[K_{b_M(\mu)}:K_\mu]_\ell}\leq [M:D({b_M(\mu)})]_\ell,\] implying equality throughout.  But this contradicts \prettyref{prop:ht0cusp} applied to $b_M(\mu)$, so $M=L$ and $\chi$ must be a constituent of $R_L^G(\psi)$ for some cuspidal $\psi\in\irr(b)$, as desired.
\end{proof}

Lemmas \ref{lem:localht0} and \ref{lem:globalht0} and \prettyref{cor:blockB} immediately yield the following.

\begin{corollary}\label{cor:map}
Let $\ell$ be a prime dividing $q-1$ and not dividing $[\Zent(\bG{G})^F:\Zent^\circ(\bg{G})^F]$, such that $\ell\geq 5$ and $\ell\geq 7$ if $\bf{G}$ has a component of type ${\sf E}_8$.  Then the map $\Omega$ (see Definition~\ref{def:Omega}) restricts to a bijection
\[\Omega\colon \irr_0(B)\rightarrow \irr_0(\wt{b})\]
\[R_L^G(\psi)_\eta\mapsto \ind_{N_\psi}^N (\wt{\psi}\eta).\]
\end{corollary}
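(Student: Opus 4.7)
The plan is to assemble the corollary directly from the three preceding results without introducing any new argument. First I would use \prettyref{cor:blockB} to confirm that $\Omega$ is defined on all of $\irr_0(B)$: every height-zero character of $B$ already lies in $\irr(B,L)$ and is indexed by a pair $(\psi,\eta)$ with $\psi\in\cusp(b)$ and $\eta\in\irr(W(\psi))$, which is precisely the input data for $\Omega$ (see \prettyref{def:Omega}).

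Next I would place the two height-zero characterizations side by side. \prettyref{lem:globalht0} says $R_L^G(\psi)_\eta\in\irr_0(B)$ iff $\psi\in\irr_0(b)$, $\eta(1)_\ell=1$ and $[N_b:N_\psi]_\ell=1$; \prettyref{lem:localht0} says $\ind_{N_\psi}^N(\wt\psi\eta)\in\irr_0(\wt b)$ iff the very same three conditions hold. This identical characterization gives at once that $\Omega$ sends $\irr_0(B)$ into $\irr_0(\wt b)$ and that a character in $\irr(B,L)$ is of height zero iff its image is. For surjectivity onto $\irr_0(\wt b)$ I would invoke \prettyref{lem:blockN}: every $\varphi\in\irr(\wt b)$ already has the form $\ind_{N_\psi}^N(\wt\psi\eta)$, and if $\varphi\in\irr_0(\wt b)$ the three matching conditions supply the preimage $R_L^G(\psi)_\eta\in\irr_0(B)$. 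Injectivity reduces to the observation that the underlying map $\Omega\colon\irr(B,L)\to\irr(\wt b)$ is itself a bijection, since both its domain and codomain are parametrized by $N$-conjugacy classes of pairs $(\psi,\eta)$ with $\psi\in\cusp(b)$ and $\eta\in\irr(W(\psi))$: on the global side by the Howlett--Lehrer--Lusztig parametrization of Harish-Chandra series, and on the local side by \prettyref{lem:blockN} together with Clifford theory for $L\lhd N_\psi\leq N$.

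I do not expect any genuine obstacle, since all the technical content is already contained in the three cited results. The only point requiring care is that the extension $\wt\psi$ of each $\psi\in\cusp(b)$ is fixed once and for all in \prettyref{def:Omega}, so that $\Omega$ is unambiguously defined; beyond that, the proof is really just a side-by-side comparison of two identical lists of conditions.
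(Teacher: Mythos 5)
Your proof is correct and takes essentially the same approach as the paper, which derives the corollary directly from Lemmas \ref{lem:localht0} and \ref{lem:globalht0} and Corollary \ref{cor:blockB}; your additional remarks on well-definedness and surjectivity via Lemma \ref{lem:blockN} and the Howlett--Lehrer--Lusztig parametrization merely make explicit what the paper treats as immediate.
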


\begin{remark}\label{rem:Tstabla}
We remark that $N_b=N_\la$, since if $x\in N_b$ then $\la^x\in b$ and lies in $\mathcal{E}(L,\ell')$, and hence $\la^x=\la$ by \prettyref{lem:unique cusp}.
\end{remark}

\section{The Inductive Alperin-McKay Conditions}\label{sec:inductive am}

In this section we give a criterion implying the inductive Alperin-McKay conditions of \cite[7.2]{spathAMreduction} and tailored to simple groups of Lie type. This generalizes the one given in \cite[Sect. 4]{cabanesspath15} which doesn't cover all cases. Instead, to prove \prettyref{thm:iAMtypeC}, we will use the following easy adaptation of \cite[2.4]{broughspath}.

\begin{theorem}	[Brough-Sp\"ath]\label{thm:newAMconds}
Let $S$ be a finite non-abelian simple group and $\ell$ a prime dividing $|S|$.
Let $G$ be the universal covering group of $S$ and assume we have a semi-direct product $\wt G \rtimes E$ with $[\wt G,\wt G]=G\lhd \wt G \rtimes E$ and $\calB\subseteq \Bl (G)$ a $\wt G$-stable subset such that for every $B\in \calB$ the inclusion $(\wt GE)_B\leq (\wt GE)_\calB$ holds.
Assume there exist groups $M\lneq G$ and $\wt M\leq \wt G$ such that
$M=\wt M\cap G$ and $\wt M\geq M \NNN_{\wt G}(D)$, which further satisfy that 
 for every $\ell$-block $B\in \calB$ and some defect group $D$ of $B$, 
 $M$ is $\Aut (G)_{\calB,D}$-stable and $\NNN_G(D)\leq M\lneq G$. 
 Let $\calB'\subseteq \Bl (M)$ be the set of all Brauer correspondents of the $\ell$-blocks in $\calB$. Additionally assume:
	\begin{asslist}
		\item 
		\begin{itemize}[label={\textbullet}]
			\item $\Cent_{\wt G\rtimes E}(G)=\Zent(\wt G)$ and $\wt GE/\Zent(\wt G)\cong \Inn (G)\Aut (G)_{D}$ by the natural map,
			\item any element of $\Irr_0(\calB)$ extends to its stabilizer in $\wt G$,
			\item any element of $\Irr_0(\calB')$ extends to its stabilizer in $\wt M$, 
			\item the group $E$ is abelian.
		\end{itemize}
		\item \label{thm24ii}
		For $\calG:=\Irr ( \wt G\mid \Irr_0(\calB)  )$ and $\calM:=\Irr ( \wt M\mid \Irr_0(\calB')  )$ there exists an $\NNN_{\wt GE}(D)_\calB$-equivariant bijection
		\[\wt{\Omega}:\calG \longrightarrow \calM\]
		with
		\begin{itemize}
			\item $\wt{\Omega}\left( \calG\cap \Irr(\wt G\mid \wt{\nu}) \right)=\calM\cap \Irr(\wt M\mid \wt{\nu})$ for all $\wt{\nu}\in \Irr ( \Zent(\wt G)  )$,
			\item $b_{\wt M}\left( \wt{\Omega}(\wt{\chi}) \right)^{\wt G}=b_{\wt G} (\wt{\chi})$ for all $\wt{\chi}\in \calG$, and
			\item $\wt{\Omega}(\wt{\chi}\wt{\mu})=\wt{\Omega}(\wt{\chi})\wt{\mu}_{\wt M}$ for every $\wt{\mu}\in \Irr(\wt G\mid 1_G)$ and every $\wt{\chi}\in \calG$.
		\end{itemize}
		\item For every $\wt{\chi}\in \calG$ there exists some $\chi_0\in \Irr (G\mid \wt{\chi})$ such that
		\begin{itemize}[label={\textbullet}]
			\item $(\wt G\rtimes E)_{\chi_0}=\wt G_{\chi_0}\rtimes E_{\chi_0}$, and 
			\item $\chi_0$ extends to $G\rtimes E_{\chi_0}$.
		\end{itemize}
		
		\item For every $\wt{\psi}\in \calM$ there exists some $\psi_0\in \Irr (M\mid \wt{\psi})$ such that
		\begin{itemize}[label={\textbullet}]
			\item $O=(\wt G\cap O)\rtimes (E\cap O)$ for $O:=G(\wt G\times E)_{M,\psi_0}$, and 
			\item $\psi_0$ extends to $M(G\rtimes E)_{D,\psi_0}$.
		\end{itemize}
		\item \label{NewIndAmCondv} For every $B\in \calB$ and its $\wt G$-orbit $\wt{B}$ the group $\out (G)_{\wt{B}}$ is abelian.
	\end{asslist}
	Then the inductive Alperin-McKay conditions (see \cite[7.2]{spathAMreduction}) hold for all $\ell$-blocks in $\calB$.
\end{theorem}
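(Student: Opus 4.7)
The statement is framed as an adaptation of \cite[2.4]{broughspath}, and the plan is to follow that proof almost verbatim, modifying only the part where the ambient ``local'' group is replaced: in \cite{broughspath} the role is played by $\NNN_G(D)$, whereas here it is played by the possibly larger subgroup $M$. The hypothesis $\NNN_G(D)\leq M$ ensures that Brauer induction from $M$ to $G$ and the Brauer correspondence between blocks of $M$ and of $\NNN_G(D)$ both behave well, exactly as in the situation of \prettyref{lem:uniqueblock}. Recall that the inductive Alperin--McKay conditions of \cite[7.2]{spathAMreduction} demand, for each $B\in\calB$, an $\Aut(G)_{B,D}$-equivariant bijection $\Irr_0(B)\to\Irr_0(b')$ (with $b'$ the Brauer correspondent in $\NNN_G(D)$) that respects a Clifford-theoretic order relation $\succeq_b$ on character triples.

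The first step is to produce the required bijection. Starting from $\wt{\Omega}:\calG\to\calM$ supplied by (ii), I would use Clifford correspondence to descend to a bijection $\Irr_0(\calB)\to \Irr_0(\calB')$. This uses the identification $\wt GE/\Zent(\wt G)\cong \Inn(G)\Aut(G)_D$ from (i), together with the extendibility assumptions in (i) for characters of $\Irr_0(\calB)$ and $\Irr_0(\calB')$, so that the Clifford correspondents on each side are canonically parametrised by pairs (character of $\wt G$ over $\calB$, constituent of its restriction to $G$). The three bullets of (ii) translate into central-character compatibility, block-induction compatibility (via $b_{\wt M}(\wt\Omega(\wt\chi))^{\wt G}=b_{\wt G}(\wt\chi)$), and linearity under multiplication by characters of $\wt G/G$, which are precisely the $\wt G$-descent of the axioms of \cite[7.2]{spathAMreduction}. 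Composing with the Brauer-correspondent bijection between blocks of $M$ and of $\NNN_G(D)$, which is available because $\NNN_G(D)\leq M$ and the defect groups match as in \prettyref{lem:uniqueblock}(b), yields the desired bijection at the level of $\NNN_G(D)$.

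The second, and main, step is to verify the Clifford-theoretic relation $\succeq_b$ between $(G\rtimes E)_{\chi_0},\chi_0$ on the global side and $M(G\rtimes E)_{D,\psi_0},\psi_0$ on the local side, for each $\chi\in\Irr_0(B)$ and its image $\psi\in\Irr_0(b')$. Conditions (iii) and (iv) are engineered precisely for this: they provide extensions $\chi_0\in\Irr(G\mid\wt\chi)$ and $\psi_0\in\Irr(M\mid\wt\psi)$ whose stabilizer groups split as semidirect products and which extend further to the relevant semidirect products with a slice of $E$. The abelianness of $E$ in (i) and of $\out(G)_{\wt B}$ in (v) then forces the cocycle obstructions in $H^2$ to be trivial, so the projective representations used to realise $\succeq_b$ on the two sides can be chosen to match. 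The principal obstacle is the coherent choice of these extensions so that the matching is compatible with the bijection coming from $\wt\Omega$; this is resolved, as in \cite[proof of 2.4]{broughspath}, by transporting $\wt\Omega$-equivariance through the Clifford correspondence, where the already-assumed $\NNN_{\wt GE}(D)_\calB$-equivariance of $\wt\Omega$ together with (i) delivers the final $\Aut(G)_{B,D}$-equivariance needed.
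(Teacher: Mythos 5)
The paper does not prove this statement: the theorem is attributed to Brough--Sp\"ath and the authors explicitly describe it as ``the following easy adaptation of \cite[2.4]{broughspath}''; they cite it without re-proving it. There is therefore no in-paper proof to compare against, and the correct benchmark is the proof of [2.4] in \cite{broughspath}.

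Your proposal has two issues. First, your account of what is being adapted appears inaccurate: you assert that in \cite{broughspath} the local group is $\NNN_G(D)$ while here it is replaced by $M$, but Theorem 2.4 there already works with an intermediate subgroup $M$ satisfying $\NNN_G(D)\leq M\lneq G$; the adaptation made in the present paper is more plausibly a matter of packaging (allowing a $\wt G$-stable collection $\calB$ of blocks rather than a single $\wt G$-orbit of blocks), not of introducing $M$. Second, and more substantively, what you have written is a plan, not a proof. The core of any argument for such a criterion is the verification, for each $B\in\calB$, of the character-triple relation $\succeq_b$ from \cite[7.2]{spathAMreduction}, which requires constructing explicit, compatible projective representations (or extensions) for $\chi_0$ and $\psi_0$ on the respective overgroups and identifying the resulting factor sets; conditions (iii), (iv), (v) and the abelianness of $E$ enter at precise points in that construction, and your sketch only gestures at them. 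Similarly, your first step (descending $\wt\Omega$ to a bijection on $\Irr_0(B)$ via Clifford correspondence and the extendibility in (i)) is correct in outline but must also be shown to be $\Aut(G)_{B,D}$-equivariant and to preserve block data, which you do not carry out. Since the paper itself delegates the proof to \cite{broughspath}, and that proof is substantial, your sketch cannot substitute for it.
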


\subsection{Criterion With Levi Subgroups}

Here we adapt the conditions from \prettyref{thm:newAMconds} specifically to fit the bijection $\Omega$ from \prettyref{cor:map}.  Throughout this section, let $\bg{G}$ be a simple algebraic group of simply connected type over an algebraic closure of the field with $p$ elements. We assume chosen a Borel subgroup and a maximal torus $\bg{T}\leq \bg B$ and we will denote by $\Phi\supseteq\Delta$ the root system and the basis corresponding to $\bg B$. One recalls the 1-parameter unipotent subgroups $t\mapsto \xx_\alpha (t)$ for $\alpha\in\Phi$ and $t\in\overline{\F}_p$. We let $\bg{X}_{\alpha}:=\bg{x}_{\alpha}(\overline \F_q)$. One defines $F_0$ on $\bg G$ by $F_0(\xx_\alpha (t))=\xx_\alpha (t^p)$. One calls graph automorphisms (omitting to mention $\bg T$ and $\bg B$) the automorphisms of $\bg G$ defined by $\xx_{\epsilon\delta} (t)\mapsto \xx_{{\epsilon\delta '}}(t)$ for $\epsilon \in\{\pm 1\}$, $\delta\in\Delta$ and $\Delta\ni \delta\mapsto\delta '\in\Delta$ an automorphism of the associated Dynkin diagram.

We let $\bg{G}\hookrightarrow\wt{\bg{G}}$ be a regular embedding as in \cite[15.1]{cabanesenguehard}.  In particular, $\wt{\bg{G}}$ is a central product $\wt{\bg{G}}=\Zent(\wt{\bg{G}})\bg{G}$ and both $F_0$ and the graph automorphisms of $\bg G$ extend to $\wt{\bg G}$  (see \cite[Sect. 2.B]{MalleSpathMcKay2}). 
We let $F:=F_0^m\gamma$ where $\gamma$ is a graph automorphism (possibly trivial) and $m\geq 1$. We denote $q=p^m$ so that $\wt{\bg G}$ and ${\bg G}$ are defined over $\F_q$ via $F$. 

We also denote $G:=\bg{G}^F$, $\wt{G}:=\wt{\bg{G}}^F$ and let $E$ be the subgroup of $\Aut(\wt G)$ generated by the restrictions of $F_0$ and the graph automorphisms considered above.

 Let $\ell$ denote a prime not dividing $q$. All blocks considered will be $\ell$-blocks.

Let $\bg{L} ={\bg T}\spann<{\bg X}_\al\mid\al\in\Phi '>$ be a standard Levi subgroup of $\bg{G}$ associated with $\Phi':=\Phi\cap\R \Delta '$ for some $\Delta '\subseteq\Delta$ which we assume $\gamma$-stable, so that $\bg L$ is $F$-stable. Now let $\wt{\bg{L}}:=\bg{L}\Zent(\wt{\bg{G}})$ be the corresponding split Levi subgroup of $\wt{\bg{G}}$.  Write $L:=\bg{L}^F$ and $\wt{L}:=\wt{\bg{L}}^F$ for the resulting Levi subgroups of $G$ and $\wt{G}$, respectively.  Write $T:=\bg{T}^F$ and $\wt{T}:=\wt{\bg{T}}^F$. Let $N:=\NNN_{\bg{G}}({\bg{L}})^F$ and $\wt{N}:=\NNN_{\wt{\bg{G}}}(\wt{\bg{L}})^F$, and note that \[\wt{N}=\wt{L}N=\wt{T}N\] by a standard application of Lang's theorem.

In this section, we aim to prove the following:
\begin{proposition}\label{prop:ourconds}
Let $G=\bg{G}^F$ as above and assume that $G$ is the universal covering group of the non-abelian simple group $G/\Zent(G)$. Let $\ell$ be a prime dividing $q-1$ but not dividing $6|\Zent(G)|$, and further assume $\ell\geq 7$ if $\bg{G}$ is of type ${\sf E}_8$.  Let $B\in\Bl(G)$ corresponding to a cuspidal pair of $L$, where $L$ is $E$-stable.
Assume that $E$ is cyclic and that for $\wt{L}, N,$ and $\wt{N}$ as above, there is an $NE$-stable $\wt{L}$-transversal $\mathbb{T}\subseteq\cusp(L)$ such that the following hold:
\begin{enumerate}[label=(\arabic*)]
\item There is an $NE$-equivariant extension map (see Definition~\ref{ExtMap}) with respect to $L\lhd N$ for $\mathbb{T}$. 
\item $R(^t\la)\leq \ker(\delta_{\la,t})$ for all $\la\in \mathbb{T}$ and $t\in\wt{T}$ with the notation from \cite[Sect. 4]{MalleSpathMcKay2}.
\item For every $\wt{\chi}\in \irr( \wt G\mid\irr_0(B))$, there exists some $\chi_0\in\irr(G\mid\wt{\chi})$ such that
 $(\wt{G}\rtimes E)_{\chi_0}=\wt{G}_{\chi_0}\rtimes E_{\chi_0}$.

\item $\out(G)_{\wt B}$ is abelian for the $\wt G$-orbit $\wt B$ of $B$.
\end{enumerate}
Then the inductive Alperin-McKay conditions hold  for $B$.

\end{proposition}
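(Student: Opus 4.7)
The plan is to apply Theorem~\ref{thm:newAMconds} with the choices $M := N$ and $\wt M := \wt N$, letting $\calB$ be the $\wt G$-orbit of $B$, so that $\calB'$ consists of the corresponding blocks $\wt b$ as produced in \prettyref{sec:local}, whose defect groups agree with those in $\calB$ by \prettyref{lem:samedefect}. The basic setup is straightforward: $N = \wt N \cap G$ holds because $\wt N = \wt L N$ and $\wt L \cap G = L$; the inclusion $\wt N \supseteq N\,\NNN_{\wt G}(D)$ follows by applying the argument of \prettyref{lem:normalizerD} inside $\wt G$; the containment $\NNN_G(D) \leq N$ and the $\Aut(G)_{\calB,D}$-stability of $N$ are exactly \prettyref{lem:normalizerD}; and $N \lneq G$ because $L \lneq G$ (the degenerate case $L = G$ being covered by \prettyref{rem:ht0cusp}(a) and requiring separate, trivial treatment). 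For assumption~(i) of Theorem~\ref{thm:newAMconds}, the identity $\Cent_{\wt G\rtimes E}(G) = \Zent(\wt G)$ and the isomorphism $\wt GE/\Zent(\wt G)\cong \Inn(G)\Aut(G)_D$ are standard consequences of the regular embedding and the shape $F = F_0^m\gamma$; extendability of $\Irr_0(\calB)$ to its $\wt G$-stabilizer is standard given that $\wt G/G$ is abelian; extendability of $\Irr_0(\calB')$ to its $\wt N$-stabilizer is provided by the extension map of hypothesis~(1) combined with \prettyref{lem:blockN} inflated from $N$ to $\wt N$; and cyclicity of $E$ is given.

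The central task is the construction of the bijection $\wt\Omega$ in assumption~(ii), obtained by lifting $\Omega$ (\prettyref{def:Omega}) from $(G,N)$ to $(\wt G,\wt N)$ using the transversal $\mathbb{T}$. For $\wt\chi\in\calG$, Clifford correspondence together with $\wt L$-transversality of $\mathbb{T}$ associates to $\wt\chi$ a $\wt G$-orbit representative $\chi = R_L^G(\psi)_\eta$ with $\psi\in\mathbb{T}$, and then further a character of $\wt G_\chi$ above $\chi$ corresponding to an extension of $\psi$ to $\wt L_\psi$. The symmetric parameterization for $\wt\varphi\in\calM$ uses the extension map $\Lambda$ of hypothesis~(1) and the explicit description of $\irr(\wt b)$ from \prettyref{lem:blockN}. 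Matching these two parameterizations defines $\wt\Omega$. The key technical point is that the choice of extension of $\psi$ to $\wt L_\psi$ must not affect the output on the local side; this compatibility is precisely the condition $R({}^t\la)\leq \ker(\delta_{\la,t})$ of hypothesis~(2), in the sense of \cite[Sect.~4]{MalleSpathMcKay2}. The three required properties of $\wt\Omega$ then follow from standard facts: compatibility with $\Zent(\wt G)$-characters since both $R_L^G$ and induction from $\wt N_\psi$ preserve central characters; preservation of Brauer correspondents from \prettyref{prop:HCblock} and \prettyref{lem:samedefect}; and compatibility with $\irr(\wt G\mid 1_G)$-tensor products from Gallagher's theorem on both sides, noting that $\wt\mu$ restricts trivially to $L$. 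Finally, $\NNN_{\wt GE}(D)_\calB$-equivariance of $\wt\Omega$ reduces, via $\NNN_{\wt GE}(D)_\calB\leq \wt N E$ (\prettyref{lem:normalizerD} applied in $\wt G$), to $NE$-equivariance of $\Lambda$ combined with $NE$-stability of $\mathbb{T}$.

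Assumption~(iii) is immediate: hypothesis~(3) gives the split-stabilizer form $(\wt G\rtimes E)_{\chi_0} = \wt G_{\chi_0}\rtimes E_{\chi_0}$, and cyclicity of $E$ (hence of $E_{\chi_0}$) forces $\chi_0$ to extend to $G\rtimes E_{\chi_0}$ via a standard cyclic-extension argument. Assumption~(iv) is the local analog: take $\psi_0 = \ind_{N_\psi}^N(\Lambda(\psi)\eta)$ with $\psi\in\mathbb{T}$ as in \prettyref{lem:blockN}; the $NE$-equivariance of $\Lambda$ together with $NE$-stability of $\mathbb{T}$ yields the desired semidirect decomposition for $O = G(\wt G\times E)_{M,\psi_0}$, and the extension of $\psi_0$ to $M(G\rtimes E)_{D,\psi_0}$ follows by extending $\Lambda(\psi)$ via Clifford theory and cyclicity of $E_{\psi_0}$. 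Assumption~(v) is hypothesis~(4). The principal technical obstacle is the well-definedness and equivariance of $\wt\Omega$: once hypothesis~(2) establishes that the parameterization is independent of the chosen extension of $\la$ to $\wt L_\la$ and hypothesis~(1) supplies a compatible $NE$-equivariant extension, all remaining conditions fall out of Clifford theory and the cyclicity of $E$.
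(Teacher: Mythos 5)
Your overall strategy matches the paper's: apply Theorem~\ref{thm:newAMconds} with $M:=N$, $\wt M:=\wt N$, $\calB$ the $\wt G$-orbit of $B$, and use Lemmas~\ref{lem:ht0above}, \ref{lem:omegapropsabove}, \ref{lem:equivextabove} to obtain the lifted bijection $\wt\Omega$ and verify assumption~(ii). However, your verification of assumption~(iv) has a genuine gap. You claim that the split decomposition of $O=G(\wt G\times E)_{M,\psi_0}$ ``follows from the $NE$-equivariance of $\Lambda$ together with $NE$-stability of $\mathbb{T}$.'' That is not enough: the splitting of the stabilizer of $\psi_0$ is not something you can read off from equivariance of the extension map alone. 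The paper's argument is to choose $\psi_0:=\Omega(\chi_0)$ where $\chi_0\in\Irr(G\mid\wt\chi)$ is the character supplied by \emph{hypothesis~(3)}, i.e., the one with $(\wt G\rtimes E)_{\chi_0}=\wt G_{\chi_0}\rtimes E_{\chi_0}$, and then to transfer that split-stabilizer property to $\psi_0$ via the $\NNN_{\wt GE}(D)$-equivariance of $\Omega$ (following \cite[5.3]{MalleSpathMcKay2} and \cite[4.3]{cabanesspathtypeC}). Your write-up never invokes hypothesis~(3) in the treatment of~(iv), so that step is incomplete; without it you have no handle on whether the stabilizer of $\psi_0$ in $\wt G\times E$ splits.

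A secondary, smaller issue: you describe hypothesis~(2) as ensuring ``that the choice of extension of $\psi$ to $\wt L_\psi$ must not affect the output on the local side.'' Its actual role (cf.~\cite[Sect.~4--5]{MalleSpathMcKay2}) is to control the discrepancy characters $\delta_{\la,t}$ that arise when comparing $\Lambda({}^t\la)$ with ${}^t\Lambda(\la)$ for $t\in\wt T$; vanishing of these on $R({}^t\la)$ is what gives the $\NNN_{\wt GE}(D)$-equivariance of $\Omega$, which is then also the input you need to make the transfer in~(iv) work. The rest of your outline — the setup, the construction of $\wt\Omega$, the handling of~(i), (iii), (v), and the observation that $L=G$ would need separate (trivial) treatment since then $M\lneq G$ fails — is consistent with the paper's proof.
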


\begin{remark}\label{starCond}
The condition (3) above is equivalent to the existence of an $E$-stable $\wt G$-transversal in $\Irr_0(B)$. Indeed, for each $\wt G\rtimes E$-orbit it suffices to select one $\chi_0$ as in the condition and take the images under $E$-action. The stabilizer property will ensure that this is a $\wt G$-transversal. The converse is also easy.

\end{remark}

We begin by recording the following straightforward observation:
\begin{lemma}\label{lem:ht0above}
Suppose $\ell\nmid |\Zent(G)|$.  
Let $B\in\Bl(G)$ and $C\in\Bl(N)$, $\wt B=\Bl(\wt G|B)$ and $\wt C=\Bl(\wt N|b)$. Then $\irr (\wt{G}\mid \irr_0(B) )=\irr_0(\wt B)$ and $\irr (\wt{N}\mid \irr_0(C) )=\irr_0(\wt{C})$.
\end{lemma}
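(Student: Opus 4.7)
The plan is to carry out parallel Clifford-theoretic comparisons of heights for the pairs $G\lhd\wt G$ and $N\lhd\wt N$. First I would record that $\wt G/G$ is abelian: since $\wt{\bg G}$ is a regular embedding, $\wt{\bg G}/\bg G$ is a torus, so $\wt G/G$ embeds in its group of $F$-fixed points. The identity $\wt N=\wt T N$ recalled in the setup of Section~\ref{sec:inductive am} immediately gives that $\wt N/N$ is also abelian, so the same scheme will apply to both pairs.

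The key input would be the standard consequence of Lusztig's classification (see \cite[15.11]{cabanesenguehard}) that every $\varphi\in\Irr(G)$ extends to its stabilizer $\wt G_\varphi$ and that $[\wt G:\wt G_\varphi]$ divides $|\Zent(\bg G)^F/\Zent^\circ(\bg G)^F|$; since $\bg G$ is simply connected, this equals $|\Zent(G)|$. The hypothesis $\ell\nmid|\Zent(G)|$ then forces $[\wt G:\wt G_\varphi]_\ell=1$, and in particular $[\wt G:\wt G_B]_\ell=1$. Given $\varphi\in\Irr_0(B)$ and $\chi\in\Irr(\wt G\mid\varphi)$ lying in a block $\wt B_0$ of $\wt G$ covering $B$, Clifford theory with abelian quotient together with the extension statement gives $\chi=\ind_{\wt G_\varphi}^{\wt G}(\wt\varphi\cdot\eta)$ for a linear $\eta\in\Irr(\wt G_\varphi/G)$, hence $\chi(1)=[\wt G:\wt G_\varphi]\,\varphi(1)$, and taking $\ell$-parts yields $\chi(1)_\ell=\varphi(1)_\ell$.

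The second ingredient is a defect group comparison: using that $\wt G/G$ is abelian and that the block stabilizer $\wt G_B$ has $\ell'$-index in $\wt G$, standard block-Clifford theory as in \cite[Ch.~9]{NavarroBlocks} yields $|D(\wt B_0)|=[\wt G:G]_\ell\cdot|D(B)|$. The height-zero criterion $\chi(1)_\ell\cdot|D(\wt B_0)|=|\wt G|_\ell$ then reduces to $\varphi(1)_\ell\cdot|D(B)|=|G|_\ell$, giving the equivalence $\chi\in\Irr_0(\wt B_0)\iff\varphi\in\Irr_0(B)$; both set-theoretic containments of $\Irr(\wt G\mid\Irr_0(B))=\Irr_0(\wt B)$ then drop out. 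The $(N,\wt N,C,\wt C)$ case proceeds by the identical scheme, with stabilizers in $\wt N$ controlled via $\wt N/N\hookrightarrow \wt T/(\wt T\cap N)$. The main obstacle I anticipate is pinning down the defect-group equality $|D(\wt B_0)|=[\wt G:G]_\ell\cdot|D(B)|$: one needs $D(\wt B_0)G/G$ to fill out the Sylow $\ell$-subgroup of $\wt G_B/G$, which combines the abelianness of $\wt G/G$ with the $\ell'$-index of $\wt G_B$ in $\wt G$. Once this is in place the rest is formula manipulation.
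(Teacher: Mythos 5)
Your route is essentially the paper's: extendibility of $\Irr(G)$ to stabilizers in $\wt G$ (Lusztig), the observation that the relevant indices are prime to $\ell$ because $\ell\nmid|\Zent(G)|$, comparison of character degrees, comparison of defect groups, and then the identical scheme for $N\lhd\wt N$ via $\wt N/N\cong\wt L/L\cong\wt G/G$. The one organizational difference is that the paper first reduces from $\wt G$ to $\Zent(\wt G)G$, which has $\ell'$-index in $\wt G$ and over which every character of $G$ extends; this removes the need for your computation of $[\wt G:\wt G_\varphi]_\ell$ because one then deals only with genuine extensions rather than induced characters from intermediate stabilizers.

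On the step you flag as the main obstacle, namely the defect-group comparison: the paper's reduction to $\Zent(\wt G)G$ also resolves this cleanly. Choosing defect groups compatibly one gets $D\leq\wt D$, and $\Zent(\wt G)_\ell$ is a central $\ell$-subgroup of $\Zent(\wt G)G$, so $\Zent(\wt G)_\ell D\leq\wt D$. On the other hand \cite[9.17]{NavarroBlocks} bounds $|\wt D|$ from above by $|\Zent(\wt G)_\ell D|$, forcing $\wt D=\Zent(\wt G)_\ell D$ and hence $[\Zent(\wt G)G:\wt D]_\ell=[G:D]_\ell$. Your formula $|D(\wt B_0)|=[\wt G:G]_\ell\cdot|D(B)|$ is equivalent once one notices $[\wt G:\Zent(\wt G)G]$ is prime to $\ell$ and $\Zent(\wt G)_\ell\cap G=\Zent(G)_\ell=1$, but the argument sketched in your ``anticipated obstacle'' (that $D(\wt B_0)G/G$ fills out a Sylow of $\wt G_B/G$) is not the mechanism — it is the sandwich $\Zent(\wt G)_\ell D\leq\wt D$ together with the numerical bound from \cite[9.17]{NavarroBlocks} that pins this down. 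With that gap filled, the rest of your argument matches the paper's.
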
 

\begin{proof}
 Note that by \cite[Proposition 10]{lusztig88}, any $\chi\in\irr(G)$ extends to its inertia group $\wt{G}_\chi$ in $\wt{G}$. Since $\ell\nmid [\wt{G}:\Zent(\wt{G})G]$ and $\Zent(\wt{G})G\leq \wt{G}_\chi$ for any $\chi\in\irr(G)$, it suffices to prove the statement for $\Zent(\wt{G})G$ rather than $\wt{G}$. Let $\wt{\chi}\in\irr(\Zent(\wt{G})G|\chi)$ be an extension of $\chi\in\irr(G)$ and let $\wt{D}$ and $D$ be the defect groups of $\wt B$ and $ B$ in $\wt{G}$ and $G$, respectively.  Note that $\Zent(\wt{G})_\ell\leq \wt{D}$ and $\Zent(G)_\ell\leq D$, and hence $\wt{D}$ can be chosen so that $\Zent(\wt{G})_\ell D\leq \wt{D}$.  Further, \cite[9.17]{NavarroBlocks} yields that $|\wt{D}|\leq |\Zent(\wt{G})_\ell D|$.  Then $ [\Zent(\wt{G})G:\wt{D}]_\ell= [\Zent(\wt{G})G:\Zent(\wt{G})D]_\ell=[G:D]_\ell$ and $\wt{\chi}$ has height zero if and only if $\chi$ does, giving the statement in $\wt G$. 
 
 Since $\wt{N}/N=\wt{L}N/N\cong\wt{L}/L\cong\wt{G}/G$ and the defect groups of $\wt{C}$ and $C$ are the same as the corresponding defect groups for $\wt{B}$ and $B$ under the maps constructed in \prettyref{sec:map}, the same arguments show the statement in $\wt N$. 
 \end{proof}

 As in Sections \ref{sec:local} and \ref{sec:hz}, we assume that $\ell$ divides $q-1$ but not $6\cdot [\left(\Zent(\bg{G}):\Zent^\circ(\bg{G})\right)^F]$ and $\ell\geq 7$ if $\bg G$ is of type $\sf E_8$. Then by \cite[4.1]{CabanesEnguehard99} any $\ell$-block of $G=\bg G^F$ is of the type $b_G(L,\la)$ studied before, and the same is true for $\wt{\bg G}^F$. Note that for $D$ a defect group of $G$ and $\wt{D}$ a defect group of $\wt{G}$ such that $D=\wt{D}\cap G$, \prettyref{cor:map} applied independently to $G$ and $\wt G$ then yields bijections 
 \[\Omega\colon \irr_0(G\mid D)\rightarrow \irr_0(N\mid D)\] and 
 \[\wt{\Omega}\colon  \irr_0(\wt{G}\mid \wt{D})\rightarrow \irr_0(\wt{N}\mid \wt{D})\] simultaneously, each preserving Brauer correspondence.  We wish to use information about $\Omega$ to obtain the properties for $\wt{\Omega}$ required in  \prettyref{prop:ourconds}.

Recall from  Definition~\ref{def:Omega} that the construction of $\Omega$ depends on the choice of an extension map $\psi\mapsto \wt\psi$ with respect to the normal inclusions $L\lhd N$ for $\cusp(L)$.

 \begin{lemma} \label{lem:omegapropsabove}
 
 Assume that for any standard Levi subgroup $\wt L$ in $\wt G$, there is an $\wt{N}E$-equivariant extension map $\wt{\Lambda}$ (see Definition~\ref{ExtMap}) for $\cusp(\wt{L})$ with respect to $\wt{L}\lhd\wt{N}$.  Then:
 \begin{enumerate}[label=(\alph*)]
 \item  
   The map $\wt{\Omega}\colon \irr_0(\wt{G}\mid\wt{D})\rightarrow\irr_0(\wt{N}\mid\wt{D})$ described above is $\NNN_{\wt{G}E}(D)$-equivariant.
 \item 
 If the map $\wt{\Lambda}$ satisfies \[\wt{\Lambda}\left(\wt{\la}\cdot\mu|_{\wt{L}}\right)=\wt{\Lambda}(\wt{\la})\cdot\mu|_{\wt{N}_{\wt{\la}}}\]
  for each $\wt{\la}\in\cusp(\wt{L})$ and each $\mu\in\irr(\wt{G}\mid 1_G)$, then $\wt{\Omega}$ satisfies \[\wt{\Omega}(\wt{\chi}{\mu})=\wt{\Omega}(\wt{\chi})\cdot{\mu}|_{\wt{N}}\] for every $\wt{\chi}\in\irr_0({\wt{G}})$ and $\mu\in\irr(\wt{G}\mid 1_G)$.
 \end{enumerate}
 \end{lemma}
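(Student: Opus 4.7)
The plan is to unpack both statements directly from the defining formula $\wt\Omega(R_{\wt L}^{\wt G}(\wt\la)_\eta)=\ind_{\wt N_{\wt\la}}^{\wt N}(\wt\Lambda(\wt\la)\eta)$ (Definition~\ref{def:Omega}), piggy-backing on the corresponding behaviour of the extension map $\wt{\Lambda}$ under conjugation and under twists by characters trivial on $G$.

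For (a), the first step will be to reduce the equivariance claim from $\NNN_{\wt G E}(D)$ to $\wt N E$. Running the argument of \prettyref{lem:normalizerD} verbatim with the regular embedding $\wt{\bg G}$ in place of $\bg G$ gives $\NNN_{\wt G}(D)\le \wt N$, and since $E$ stabilises $\wt L$ (hence $\wt N$), this yields $\NNN_{\wt G E}(D)\le \wt N E$. Given $\sigma\in\wt N E$ and $\wt\chi=R_{\wt L}^{\wt G}(\wt\la)_\eta$, the functoriality of Harish--Chandra induction under the automorphism $\sigma$ of $\wt G$ gives $\wt\chi^\sigma=R_{\wt L}^{\wt G}(\wt\la^\sigma)_{\eta^\sigma}$, with $\eta^\sigma$ the transport of $\eta$ along the isomorphism $W(\wt\la)\xrightarrow{\sim} W(\wt\la^\sigma)$ induced by $\sigma$. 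Applying the defining formula and invoking $\wt{\Lambda}(\wt\la^\sigma)=\wt{\Lambda}(\wt\la)^\sigma$ (the $\wt N E$-equivariance hypothesis), together with the fact that $\sigma$ normalises $\wt N$ and sends $\wt N_{\wt\la}$ to $\wt N_{\wt\la^\sigma}$, will deliver
$$\wt\Omega(\wt\chi^\sigma)=\ind_{\wt N_{\wt\la^\sigma}}^{\wt N}\bigl(\wt{\Lambda}(\wt\la)^\sigma\eta^\sigma\bigr)=\bigl(\ind_{\wt N_{\wt\la}}^{\wt N}\bigl(\wt{\Lambda}(\wt\la)\eta\bigr)\bigr)^\sigma=\wt\Omega(\wt\chi)^\sigma.$$

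For (b), I first note that, since $\wt G/G$ is abelian, any $\mu\in\Irr(\wt G\mid 1_G)$ is linear and its restriction $\mu|_{\wt L}$ is a linear character of $\wt L$ trivial on $L$ and $\wt N$-invariant (since $\mu$ itself is invariant under conjugation by elements of $\wt G\supseteq\wt N$). The tensor-product formula for (Harish--Chandra) induction then yields
$$R_{\wt L}^{\wt G}(\wt\la)\cdot\mu = R_{\wt L}^{\wt G}\bigl(\wt\la\cdot\mu|_{\wt L}\bigr),$$
because $\mu$ restricts to any standard parabolic as the inflation of $\mu|_{\wt L}$. Since $\mu|_{\wt L}$ is $\wt N$-invariant, $W(\wt\la\mu|_{\wt L})=W(\wt\la)$, and the Howlett--Lehrer--Lusztig parametrisation of constituents is preserved by the twist, so that $R_{\wt L}^{\wt G}(\wt\la)_\eta\cdot\mu=R_{\wt L}^{\wt G}(\wt\la\mu|_{\wt L})_\eta$. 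Combining the defining formula for $\wt\Omega$, the hypothesis $\wt{\Lambda}(\wt\la\mu|_{\wt L})=\wt{\Lambda}(\wt\la)\mu|_{\wt N_{\wt\la}}$ of (b), and the ordinary projection formula $\ind_H^K(\alpha\cdot\beta|_H)=\ind_H^K(\alpha)\cdot\beta$, I obtain
$$\wt\Omega(\wt\chi\mu)=\ind_{\wt N_{\wt\la}}^{\wt N}\bigl(\wt{\Lambda}(\wt\la)\mu|_{\wt N_{\wt\la}}\,\eta\bigr)=\ind_{\wt N_{\wt\la}}^{\wt N}\bigl(\wt{\Lambda}(\wt\la)\eta\bigr)\cdot\mu|_{\wt N}=\wt\Omega(\wt\chi)\cdot\mu|_{\wt N}.$$

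The main point I expect to require care is the compatibility assertion $R_{\wt L}^{\wt G}(\wt\la)_\eta\cdot\mu=R_{\wt L}^{\wt G}(\wt\la\mu|_{\wt L})_\eta$, i.e.\ that the Howlett--Lehrer--Lusztig label $\eta$ is preserved under multiplication by $\mu$, since that labelling rests on choices of extensions or of a basis of the Hecke algebra. This should follow from the observation that multiplication by $\mu$ defines an isomorphism of $\wt G$-modules $R_{\wt L}^{\wt G}(\wt\la)\cong R_{\wt L}^{\wt G}(\wt\la\mu|_{\wt L})$ which intertwines the two Hecke algebra structures $\End_{\C\wt G}(-)\cong\C[W(\wt\la)]$; alternatively one can verify it directly from the degree/character formula of \cite[(7.2)]{MalleSpathMcKay2} already used in \prettyref{lem:globalht0}.
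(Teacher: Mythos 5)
Your proof is essentially the paper's own argument, unpacked: the paper simply says both parts follow from the construction of $\wt\Omega$ together with \cite[4.6, 4.7]{MalleSpathMcKay2}, observing that the linear character $\delta_{\wt\lambda,\sigma}$ of \cite[4.6]{MalleSpathMcKay2} is trivial for $\sigma$ induced by $\wt N E$ because $\wt\Lambda$ is $\wt N E$-equivariant. One small presentational remark on part (a): writing ``functoriality of Harish--Chandra induction gives $\wt\chi^\sigma=R_{\wt L}^{\wt G}(\wt\la^\sigma)_{\eta^\sigma}$'' and \emph{then} invoking $\wt\Lambda$-equivariance is logically slightly out of order, since what \cite[4.6]{MalleSpathMcKay2} actually gives is $\wt\chi^\sigma=R_{\wt L}^{\wt G}(\wt\la^\sigma)_{\delta_{\wt\lambda,\sigma}\,\eta^\sigma}$, and it is precisely the $\wt N E$-equivariance of $\wt\Lambda$ that forces $\delta_{\wt\lambda,\sigma}=1$; the transport of the Howlett--Lehrer--Lusztig label as $\eta\mapsto\eta^\sigma$ is not an independent feature of Harish--Chandra functoriality but is tied to the choice of extension. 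Your part (b) correctly flags the analogous subtlety (that the label depends on the extension choice) and resolves it via the compatibility hypothesis on $\wt\Lambda$, which is exactly what \cite[4.7]{MalleSpathMcKay2} encodes. The reduction of $\NNN_{\wt G E}(D)$ to $\wt N E$ at the start of (a) is fine, either by re-running Lemma~\ref{lem:normalizerD} for $\wt{\bg G}$ or more directly using $\wt G=\Zent(\wt G)G$ together with $\NNN_G(D)\leq N$.
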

 \begin{proof}
Both statements follow directly from our construction of $\wt{\Omega}$, taking into account \cite[4.6, 4.7]{MalleSpathMcKay2} for part (a). Note that, thanks to the $\wt NE$-equivariance of $\wt\Lambda$, the linear character $\delta_{\la ,\sigma}$ in \cite[4.6]{MalleSpathMcKay2} is trivial in the case of an automorphism $\sigma$ induced by an element of $\wt NE$.
 \end{proof}
 
 \begin{lemma}\label{lem:equivextabove}
Assume condition (1)  of \prettyref{prop:ourconds}.  Then there is an $\wt{N}E$-equivariant extension map $\wt{\Lambda}$ for $\cusp(\wt{L})$ with respect to $\wt{L}\lhd \wt{N}$ satisfying
\[\wt{\Lambda}\left(\wt{\la}\cdot\mu|_{\wt{L}}\right)=\wt{\Lambda}(\wt{\la})\cdot\mu|_{\wt{N}_{\wt{\la}}}\] for each $\wt{\la}\in\cusp(\wt{L})$ and each $\mu\in\irr(\wt{G}\mid 1_G)$. \end{lemma}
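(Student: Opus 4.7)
The plan is to lift $\Lambda$ to $\wt\Lambda$ via Clifford theory along the abelian quotient $\wt{L}/L$, using the parallel structure $N\lhd \wt{N}$ coming from $\wt{N}=\wt{L}N$ and $\wt{L}\cap N=L$. Under these identifications $\wt{N}/\wt{L}\cong N/L$, which will be key for transferring equivariance from $\Lambda$ to $\wt\Lambda$.

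First, I would parametrize $\cusp(\wt{L})$ through $\mathbb{T}$: since $\mathbb{T}$ is a $\wt{L}$-transversal in $\cusp(L)$, every $\wt\la\in\cusp(\wt{L})$ lies over a unique $\la\in\mathbb{T}$. Clifford theory along $L\lhd \wt{L}$ (with $\wt{L}/L$ abelian) yields an extension $\la_0\in\Irr(\wt{L}_\la)$ of $\la$ with $\wt\la=\ind_{\wt{L}_\la}^{\wt{L}}\la_0$. The hypothesis supplies another extension $\Lambda(\la)\in\Irr(N_\la)$ of $\la$. Inside the subgroup $H:=\wt{L}_\la N_\la\leq \wt{N}_{\wt\la}$ we have $\wt{L}_\la\cap N_\la=L$ (a consequence of $\wt{L}\cap N=L$), so $\la_0$ and $\Lambda(\la)$ restrict to the same character $\la$ on their intersection.

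Second, I would glue $\la_0$ and $\Lambda(\la)$ into a single extension $\widehat\la\in\Irr(H)$ of $\la$. The obstruction to such a gluing is a Clifford-type $2$-cocycle, which in the notation of \cite[Sect.~4]{MalleSpathMcKay2} is captured by the linear characters $\delta_{\la,\sigma}$ for $\sigma$ running over a set of generators of $\wt{L}_\la/L$. Using $\wt{L}=\wt{T}L$, those generators may be taken in $\wt{T}$, so the obstruction reduces to a $\wt{T}$-conjugation statement on $\Lambda(\la)$ itself. Following the template of \cite[4.6, 4.7]{MalleSpathMcKay2} (and invoking the $NE$-equivariance of $\Lambda$ for the $N$-direction), one constructs $\widehat\la$; then one defines
\[
\wt\Lambda(\wt\la)\ :=\ \ind_{H}^{\wt{N}_{\wt\la}}\widehat\la,
\]
checks it is irreducible, and verifies that it extends $\wt\la$ using $\wt\la=\ind_{\wt{L}_\la}^{\wt{L}}\la_0$ and transitivity of induction.

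Third, I would verify the two required properties. The $\wt{N}E$-equivariance of $\wt\Lambda$ is inherited from the $NE$-equivariance of $\Lambda$ on the $N_\la$-side together with the trivial action of $\wt{L}$ on $\wt\la$ (by the definition of $\wt{N}_{\wt\la}$); acting by $\wt{T}$ is handled by the way it was built into the gluing step. The compatibility $\wt\Lambda(\wt\la\cdot\mu|_{\wt{L}})=\wt\Lambda(\wt\la)\cdot\mu|_{\wt{N}_{\wt\la}}$ for $\mu\in\Irr(\wt{G}\mid 1_G)$ follows because $\mu|_{\wt{L}}$ is trivial on $L$: the multiplication replaces $\la_0$ by $\la_0\cdot\mu|_{\wt{L}_\la}$ without touching $\Lambda(\la)$, the glued $\widehat\la$ is then twisted by $\mu|_H$, and Clifford induction propagates this to a twist by $\mu|_{\wt{N}_{\wt\la}}$.

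The main obstacle is the gluing step: controlling the $2$-cocycle so that $\la_0$ and $\Lambda(\la)$ fit together into $\widehat\la$ on $H$. This is exactly the content of the arguments in \cite[4.6, 4.7]{MalleSpathMcKay2} transferred to our setting, and its successful execution is where the real technical work lies; the remaining verifications are essentially formal once $\widehat\la$ is in hand.
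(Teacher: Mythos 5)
Your high-level strategy matches the paper's---parametrize $\cusp(\wt{L})$ through $\mathbb{T}$, choose an extension of $\la\in\mathbb{T}$ to $\wt{L}_\la$ (you call it $\la_0$; the paper calls it $\wt{\la}_0$), combine it with $\Lambda(\la)$ by a common-extension argument, and induce to $\wt{N}_{\wt{\la}}$. But there is a genuine gap in the gluing step. You take $H := \wt{L}_\la N_\la$ and assert $H\leq\wt{N}_{\wt{\la}}$; this fails in general, since $N_\la$ fixes $\la$ but may permute the characters of $\wt{L}$ lying over $\la$ and hence need not stabilize $\wt{\la}$. More importantly, on $\wt{L}_\la N_\la$ the two partial extensions are not mutually stable---$N_\la$ need not stabilize $\la_0$, so the common-extension result of \cite[4.1]{spath10b} (or \cite[5.8(a)]{CabanesSpathInductiveMcKayTypeA}) does not apply, and the obstruction you allude to via the $\delta_{\la,\sigma}$'s is genuinely present. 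Controlling those $\delta_{\la,\sigma}$ for $\sigma$ induced by $\wt{T}$ is exactly the content of assumption (2) of \prettyref{prop:ourconds}, which is not among the hypotheses of this lemma, so your fallback plan invokes an assumption that is not available here.

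The paper sidesteps the obstruction entirely by shrinking the $N$-side before gluing: it restricts $\Lambda(\la_0)$ to $N_{\wt{\la}_0}$, the stabilizer in $N$ of the chosen $\wt{L}_{\la_0}$-extension $\wt{\la}_0$, and applies the common-extension lemma on $\wt{L}_{\la_0}N_{\wt{\la}_0}$. There $\wt{\la}_0$ is $N_{\wt{\la}_0}$-stable by construction, so the lemma gives a unique $\varphi$ unconditionally, with no cocycle to control; and the $N$-stability of $\mathbb{T}$ gives $\wt{L}N_{\wt{\la}_0}=\wt{L}N_{\wt{\la}}=\wt{N}_{\wt{\la}}$, so inducing $\varphi$ lands precisely in $\Irr(\wt{N}_{\wt{\la}})$. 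Your argument would work if you replaced $N_\la$ by $N_{\wt{\la}_0}$ throughout (in your notation, $N_{\la_0}$, the stabilizer of the lift $\la_0$ rather than of $\la$); without that change the gluing step, as well as the compatibility with $\mu\in\Irr(\wt{G}\mid 1_G)$, is not justified.
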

 \begin{proof}
 Let $\wt{\la}\in\cusp(\wt{L})$ and $\la_0\in\irr(L|\wt{\la})\cap\mathbb{T}$.  Fix an extension $\wt{\la}_0$ of $\la_0$ to $\wt{L}_{\la_0}$ such that $\wt{\la}=\ind_{\wt{L}_{\la_0}}^{\wt{L}}(\wt{\la}_0)$.  Note that since $\mathbb{T}$ is $N$-stable, we have $\wt{L}N_{\wt{\la}_0}=\wt{L}N_{\wt{\la}}$, using Clifford theory. Let $\Lambda$ be the assumed extension map with respect to $L\lhd N$, so that $\Lambda(\la_0)$ is a character of $N_{\la_0}$ extending $\la_0$. Then by \cite[5.8 (a)]{CabanesSpathInductiveMcKayTypeA} or \cite[4.1]{spath10b}, there exists a unique common extension, call it $\varphi$, of $\wt{\la}_0$ and $\Lambda(\la_0)|_{N_{\wt{\la}_0}}$ to $\wt{L}_{\la_0}N_{\wt{\la}_0}$.  Define 
 \[\wt{\Lambda}(\wt{\la}):=\ind_{\wt{L}_{\la_0}N_{\wt{\la}_0}}^{\wt{L}N_{\wt{\la}_0}}(\varphi).\] Then $\wt{\Lambda}(\wt{\la})$ is an extension of $\wt{\la}$ to $\wt{N}_{\wt{\la}}=\wt{L}N_{\wt{\la}}=\wt{L}N_{\wt{\la}_0}$.  This defines an extension map $\wt{\Lambda}$, which by construction is $NE$-equivariant. The map $\wt \Lambda$ is $\wt L$-equivariant, hence $\wt N$-equivariant since $\wt N=\wt L N$. The required equation $
 \wt{\Lambda}\left(\wt{\la}\cdot\mu|_{\wt{L}}\right)=\wt{\Lambda}(\wt{\la})\cdot\mu|_{\wt{N}_{\wt{\la}}}$ holds since $\wt{\Lambda}(\wt{\la}\cdot\mu|_{\wt{L}})$ is constructed using $\la_0\in \mathbb{T} \cap \Irr(L|{\wt{\la}\cdot\mu|_{\wt{L}}})$ and the common extension of $\wt{\la}_0 \mu|_{\wt{L}_{\la_0}} $ and $\Lambda(\la_0)|_{N_{\wt{\la}_0}}$.
  \end{proof}
 
We are now ready to prove \prettyref{prop:ourconds}.
 
 \begin{proof}[Proof of \prettyref{prop:ourconds}] We check that the assumptions of Theorem~\ref{thm:newAMconds} are satisfied. We have $S=G/\Zent(G)$ of which $G$ is a universal covering by assumption and on which $\wt G\rtimes E$ induces the whole automorphism group by \cite[2.5.1]{gorensteinlyonssolomonIII}. We also have the stabilizer part of assumption 3.1(iii) by 3.2(3). Note that the extendibility conditions of 3.1(iii) and 3.1(iv), along with the remainder of condition 3.1(i) are ensured by the assumption that $E$ and hence $M(GE)_{D,\psi_0}/M$ are cyclic.
 	
Note that a $\wt{G}$-orbit $\wt{B}$ containing $B=b_G(L,\la)\in\Bl(G)$ is composed of blocks $b_G(L,\la')$ for other $\la'\in\cusp(L)\cap\mathcal{E}(L,\ell')$, and hence $N=\NNN_{\bg G}(\bg L)^F$ contains $\NNN_G(D_B)$ for each $B\in\wt{B}$, applying \prettyref{lem:normalizerD}.  Taking $M:=N$ and $\wt{M}:=\wt{N}$, we see using Lemmas \ref{lem:ht0above}, \ref{lem:omegapropsabove}, and \ref{lem:equivextabove}, together with our assumptions, that assumption (ii) of \prettyref{thm:newAMconds} holds.

Our map $\Omega$ is built with the same method as for the bijection in \cite[5.2]{MalleSpathMcKay2}. 
The arguments from there can be applied thanks to assumptions 3.2(1) - (2) and show that $\Omega$ is $\NNN_{\wt{G}E}(D)$-equivariant. 

In order to now ensure  condition (v) of \prettyref{thm:newAMconds} we apply the considerations from the proof of 
\cite[5.3]{MalleSpathMcKay2}:
 Let $\wt{\psi}\in\irr_0(\wt{M})$.  As in the proof of \cite[4.3]{cabanesspathtypeC}, it suffices to show that $(\wt{M}\wh{M})_{\psi_0}=\wt{M}_{\psi_0}\wh{M}_{\psi_0}$, where $\wh{M}:=NE$ and $\psi_0$ is a suitable member of $\irr(M\mid\wt{\psi})$.  But this follows by taking $\psi_0:=\Omega(\chi_0)$, where $\chi_0\in\irr(G\mid\wt{\chi})$ satisfies assumption 3.2(3) and $\wt{\psi}=\wt{\Omega}(\wt{\chi})$.
\end{proof}
\section{Extending Cuspidal Characters in Type ${\sf C}_l$} \label{sec:ext_C}
Our main task is now to verify in $G={\rm Sp}_{2l}(q)$ the existence of $\Bbb T$ satisfying assumption (1) of Proposition~\ref{prop:ourconds}, namely we construct an $N E$-stable $\wt L$-transversal $\bb{T}\subseteq \cusp(L)$ and an extension map with respect to $L\lhd N$ for $\bb{T}$. 

 We will check this via an application of the following  
criterion,
which is based on \cite[4.2]{broughspath}. It will be applied in a case where $K=K_0$  but we show the slightly stronger statement for future reference.

\begin{proposition} \label{cor_tool}
	Let $K\lhd M$ and $K_0\lhd M$ with $K_0\leq K$ be finite groups, and let $E$ be a group acting on $M$, stabilizing $K$ and $K_0$. In addition, let $\mathbb K\subseteq \Irr(K)$ be $ME$-stable. Assume 
	\begin{asslist}
		\item \label{cor32i} $K=\Zent(K)K_0$;
		\item  there exists some $E$-stable group $V\leq M$ such that
		\begin{enumerate}
			\item[(ii.1)] 	\label{cor23ii1}
			$M=KV$ and $H:=V\cap K\leq \Zent(K)$; and 
			\item[(ii.2)] \label{cor23ii2} \label{prop23ii2}
			there exists some $VE$-equivariant extension map $\Lambda_0$ with respect to $H\lhd V$ for $\bigcup_{\la\in \mathbb K} \Irr(H\mid  \la)$;
		\end{enumerate}
		\item \label{cor_toolext} denoting $\epsilon\colon V\to V/H$ the canonical surjection,
		there exists an $\epsilon(V)E$-equivariant extension map $\Lambda_\epsilon$ with respect to $K_0\lhd K_0\rtimes \epsilon(V_{})$ for the set $\bigcup_{\la\in \mathbb K} \Irr(K_0\mid  \la)$. 
	\end{asslist}
	Then there exists an $ME$-equivariant extension map with respect to $K\lhd M$ for $\mathbb K$.
\end{proposition}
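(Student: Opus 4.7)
The plan is to construct, for each $\lambda \in \mathbb K$, an extension $\widetilde\lambda$ of $\lambda$ to its stabilizer $M_\lambda$ in $M$, by combining the inputs from (ii.2) and \ref{cor_toolext}, and then to show that the assignment is $ME$-equivariant.

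First, I would extract canonical data from $\lambda$. Since $H \leq \Zent(K)$, the restriction $\lambda|_H$ is a multiple of a unique linear character $\mu_0 \in \Irr(H)$. Assumption (i) $K = \Zent(K)K_0$ forces $K$ to act on $K_0$ only by inner automorphisms of $K_0$, so $K/K_0$ acts trivially on $\Irr(K_0)$ and Clifford theory gives $\lambda|_{K_0} = e\lambda_0$ for a unique $\lambda_0 \in \Irr(K_0)$ (with $e = \lambda(1)/\lambda_0(1)$). Similarly, $\lambda|_{\Zent(K)} = \lambda(1)\mu$ for a unique $\mu \in \Irr(\Zent(K))$, with $\mu|_H = \mu_0$ and $\mu|_{\Zent(K)\cap K_0}$ agreeing with the central character of $\lambda_0$ on $\Zent(K)\cap K_0 \leq \Zent(K_0)$. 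From $M=KV$ and $V\cap K=H\leq V_\lambda$ one has $M_\lambda=KV_\lambda$ and $V_\lambda\cap K=H$; and since $\mu_0$ and $\lambda_0$ are intrinsic to $\lambda$, $V_\lambda\leq V_{\mu_0}$ and $\epsilon(V_\lambda)\leq\epsilon(V)_{\lambda_0}$. Set $\eta:=\Lambda_0(\mu_0)|_{V_\lambda}$ and $\widetilde\lambda_0:=\Lambda_\epsilon(\lambda_0)|_{K_0\epsilon(V_\lambda)}$ using (ii.2) and \ref{cor_toolext}.

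The crucial step is to combine $\widetilde\lambda_0$ and $\eta$ into an extension $\widehat\lambda_0$ of $\lambda_0$ to the subgroup $K_0V_\lambda\subseteq M$. The subtlety is that $K_0\rtimes\epsilon(V_\lambda)$ (where $\Lambda_\epsilon$ lives) and $K_0V_\lambda\subseteq M$ are generally non-isomorphic: both are quotients of the external group $K_0\rtimes V_\lambda$, by the distinct central kernels $\{(1,h):h\in H\}$ and $\{(h^{-1},h):h\in H\cap K_0\}$ respectively. On $K_0\rtimes V_\lambda$ form the product character $(k_0,v)\mapsto\widetilde\lambda_0(k_0,\epsilon(v))\,\eta(v)$. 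For $h\in H\cap K_0\leq \Zent(K_0)$, the representation affording $\widetilde\lambda_0$ sends $(h,1)$ to the scalar $\mu_0(h)$ (its central character on $\Zent(K_0)$), so $\widetilde\lambda_0(k_0 h,\epsilon(v))=\mu_0(h)\widetilde\lambda_0(k_0,\epsilon(v))$; combined with $\eta(h)=\mu_0(h)$, this shows the product character is trivial on the first kernel, so it descends to a well-defined character $\widehat\lambda_0$ on $K_0V_\lambda$ extending $\lambda_0$.

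Having $\widehat\lambda_0$, one finally promotes it to the sought extension $\widetilde\lambda$ of $\lambda$ on $M_\lambda=\Zent(K)\cdot K_0V_\lambda$ by a standard Clifford-theoretic argument. Specifically, elements of $H$ act as the scalar $\mu_0$ in the representation affording $\widehat\lambda_0$ (because $\widehat\lambda_0|_H=\lambda_0(1)\mu_0$ is a $\lambda_0(1)$-multiple of a linear character, which in a unitary representation forces scalar action), and likewise $\Zent(K)\cap K_0$ acts as the scalar given by $\mu$; letting $\Zent(K)$ act throughout via $\mu$ thus yields a well-defined extension of the representation to $M_\lambda$, whose character is the desired $\widetilde\lambda$ with $\widetilde\lambda|_K=\lambda$. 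The $ME$-equivariance of $\lambda\mapsto\widetilde\lambda$ reduces to the $VE$- and $\epsilon(V)E$-equivariance of $\Lambda_0$ and $\Lambda_\epsilon$, since every other ingredient depends canonically on $\lambda$. The main obstacle is the descent step of paragraph three: the only reason it works is that (ii.2) provides $\eta$ with $\eta|_{H\cap K_0}=\mu_0|_{H\cap K_0}$, the precise twist that cancels the discrepancy between the two quotients of $K_0\rtimes V_\lambda$.
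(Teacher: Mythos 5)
Your proposal is correct and takes essentially the same route as the paper's, which simply cites Brough--Sp{\"a}th, Prop.\ 4.2 for the key construction $\wt\cD(kv)=\wt\zeta(v)\cD'(\epsilon(v))\cD(k)$ that you re-derive from scratch (a representation of $K_0V_\la$ built from $\Lambda_\epsilon(\la_0)$ twisted by $\Lambda_0(\mu_0)$, then extended to $M_\la$ by letting $\Zent(K)$ act via the central character $\mu$). Two small remarks: hypothesis (i) in fact forces $e=\la(1)/\la_0(1)=1$ --- needed so that your representation of $K_0V_\la$ has the right degree to restrict to $\la$ on $K$ --- and the kernel your computation actually treats, which is the one relevant for descent to $K_0V_\la$, is $\{(h^{-1},h):h\in H\cap K_0\}$, which you list second rather than ``first''.
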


\begin{proof}
	By (ii.1) it is sufficient to construct a $VE$-equivariant extension map \wrt $K\lhd M$ for $\Bbb{K}$. 
	
	Let $\la\in \Bbb{K}$. Proposition 4.2 in \cite{broughspath} defines an extension $\wt \la$ of $\la$ to $M_\la$ in the following way: the character $\zeta\in\Irr(H\mid \la)$ has the extension $\wt\zeta:=\Lambda_0(\zeta)$ and $\la_0:=\res_{K_0}^K \la$
	extends to $\Lambda_\epsilon(\la_0)\in\Irr( K_0\rtimes \epsilon(V_{\la_0})$. Those two extensions are used to  define $\wt \cD\colon M_\la\to\operatorname{GL}_{\la(1)}(\C)$ via the equation 	
	\begin{align}\label{wtcD} 
	\wt	\cD( kv)&= \wt \zeta(v)\cD' (\epsilon(v)) \mathcal D(k) \text{ for every }
	k\in K \text{ and }v\in V_\la ,\end{align}
	where $\mathcal D$ is a linear representation of $K$ affording $\la$, and $\cD'$ is a linear representation of $K_0\rtimes \epsilon(V)_{\lambda_0}$ extending $ \res^K_{K_0}\cD$ and affording $\Lambda_\epsilon(\lambda_0)$.
	
	We obtain an extension map $\Lambda$ with respect to $K\lhd KV$ for $\mathbb K$ given by $\la\mapsto \operatorname{Tr}\circ \wt \cD$. Note that this map is well-defined. Since the extension maps $\Lambda_0$ and $\Lambda_\epsilon$ are $VE$-equivariant, one checks easily  
	using the above formula that $\Lambda$ is $VE$-equivariant.
\end{proof}

\subsection{The structure of $L$ and $N$ in type $\sf C$}\label{subsec_L_C}

We now concentrate on finite quasi-simple groups of Lie type $\sf C$. Though the structure of split Levi subgroups in symplectic groups is a direct product easily dealt with, their normalizers don't equal the corresponding wreath products, so the problem of character extensions requires some special care. 

  For a positive integer $i$ let $\underline{i}:=\{1,\ldots, i \}$.

\begin{notation}\label{not:typeC}
	Let $\bg{G}={\mathrm{Sp}}_{2l}(\overline\F_q)$ be a simply connected simple group of type ${\sf C}_l$ over the field $\overline\F_q$. Let $\bg T$ be the diagonal torus and $\bg B$ be the upper triangular Borel subgroup of $\bg G$. Let $\Phi$ be the $\bg T$-roots of $\bg{G}$ given as $\{ 2e_i,  \pm e_i\pm e_j \mid i,j\in \underline l\}$ with basis $\Delta:=\{2e_1, e_{ i+1}-e_i\mid 2\leq   i\leq l\}$ as subsets of $\oplus_{i\in\underline l}^\perp\R e_i$, see  \cite[1.8.8]{gorensteinlyonssolomonIII}. Recall the identification of the Weyl group $W_\Phi$ with the group ${\cal S}_{\pm\underline l}$ of permutations $\sigma$ of $\underline l\cup -\underline l$ satisfying $\sigma(-x)=-\sigma(x)$ for any $x\in\underline l\cup -\underline l$, see \cite[1.8.8]{gorensteinlyonssolomonIII}. For $\Psi$ a subset of $\Phi$ one denotes by $W_\Psi$ the subgroup of $W_\Phi$ generated by the corresponding reflections.
	
	The Chevalley generators $\bg{x}_\alpha(t)$, $\bg{n}_\alpha(t')$ and $\bg{h}_\alpha(t')$ ($\alpha\in\Phi$, $t,t'\in\overline \F_q$ with $t'\neq 0$) together with the Steinberg relations describe the group structure of $\bg{G}$, see \cite[Thm.~1.12.1]{gorensteinlyonssolomonIII}. Let $F:\bg{G}\rightarrow\bg{G}$ be the Frobenius endomorphism with $\bg{x}_{\alpha}(t)\mapsto \bg{x}_{\alpha}(t^q)$ and $G:=\bg{G}^F$, $T:={\bg T}^F$. We take for $\wt {\bg G}$ the usual conformal symplectic group ${\mathrm{CSp}}_{2l}(\overline\F_q)$. 
	
	Let $\bg{L} ={\bg T}\spann<{\bg X}_\al\mid\al\in\Phi '>$ be a standard Levi subgroup of $\bg{G}$ associated with $\Phi':=\Phi\cap\R \Delta '$ for some $\Delta '\subseteq\Delta$. Then $\Phi'$ decomposes as a disjoint union of irreducible root systems, i.e., 
	$$\Phi'=\Phi_{-1}\sqcup  \Phi_2\sqcup \ldots \sqcup\Phi_{l-1},$$
	where $\Phi_{-1}$ denotes a root subsystem with a long root, hence of type ${\sf A}_1$ or ${\sf C}_m$ ($m\geq 2$),  and $\Phi_{d}$ is the union of direct summands subsystems of $\Phi'$ of type ${\sf A}_{d-1}$ ($d\geq 2$) with only short roots. Denote $ L=\bg{L}^F$.
	
	Note that with the notation of Sect. 3.1, $E=\spann<F_0>$. Note that every standard Levi subgroup $L$ is $E$-stable. All automorphisms of $\bg{G}^F$ are induced by 
	$\wt {\bg{G}}^F\rtimes E$ as soon as $q\geq 3$. Recall that one calls diagonal the ones induced by $\wt {\bg{G}}^F$.
	
	Write $\bb{D}:=\underline l \cup\{-1\}$. For each $d\in \bb{D} \setminus \{1\}$ let $J_d\subseteq \underline l$ be minimal with $\Phi_d\subseteq \Spann<e_k| k\in J_d>$. In addition let $J_1:=\underline l \, \setminus (J_{-1}\cup J_2\cup \ldots \cup J_l)$. Then  
	$\Phi_d=\Spann<e_k|k\in J_d> \cap \Phi'$ and we denote
	$$
	\overline  \Phi_d:=\Spann<e_k|k\in J_d> \cap \Phi
	$$ for every $d\in \bb{D}$. For $d\in \bb{D}\setminus\{1\}$ let $\mathcal{O}_d$ be the set of $W_{\Phi_d}$-orbits in $J_d$, and let $\mathcal{O}_1:=\{\{j\} \mid j\in J_1\}$. Let 
	$$ \mathcal{O}:=\bigcup_{d\in \bb{D}}\mathcal{O}_d.$$
\end{notation}

The following lemmas gather facts that are easily checked by use of the Steinberg relations or the realization of $G$ as $\bt{Sp}_{2l}(q)$ given in \cite[2.7]{gorensteinlyonssolomonIII}. 

\begin{lemma}\label{lem33_C}
	For $I\subseteq \underline l$ let $\bg{T}_I:=\Spann<\bg{h}_{2e_i}(t)|i\in I,\ t\in \overline \F_q^\times>$. For each $I\in \mathcal{O}_d$ with $d\not=1$ let  
	$\Phi_I:=\Phi_d\cap \Spann<e_i|i\in I> 
	$, 
	\begin{align}\label{def_bGIC}
	\bg{G}_I:=\spann<\bg{X}_{\alpha}\mid \alpha\in\Phi_I>\bg{T}_I \,\, \text{ and } \, \, G_I=\bg{G}_I^F.
	\end{align}
	\begin{thmlist}
		\item Then $G_I\cong \bt{GL}_{|I|}(q)$ if $I\neq J_{-1}$ and  $G_{J_{-1}} \cong\bt{ Sp}_{2|J_{-1}|}(q)$.
		\item $L$ is the direct product of the groups $G_I$ ($I\in \mathcal{O}$).
		\item $\wt L$ induces 
		diagonal automorphisms on  $G_{J_{-1}}$ and only inner automorphisms on $G_I$ ($I\neq J_{-1}$).
	\end{thmlist}
\end{lemma}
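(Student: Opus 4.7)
The plan is to work throughout with the explicit realization of $\bg G=\bt{Sp}_{2l}(\ov\F_q)$ on a symplectic basis $\{e_i,f_i:i\in\underline{l}\}$ with $(e_i,f_j)=\delta_{ij}$, in which $\bg h_{2e_i}(t)$ acts as $e_i\mapsto te_i$, $f_i\mapsto t^{-1}f_i$ and fixes the other basis vectors, $\bg x_{e_i-e_j}(s)$ sends $e_j\mapsto e_j+se_i$ (with $f_i\mapsto f_i-sf_j$ forced by the form), and $\bg x_{2e_i}(s)$ sends $f_i\mapsto f_i+se_i$. All three claims can then be verified from these formulas combined with the Steinberg relations.

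For (a), if $I\in\mathcal{O}_d$ with $d\geq 2$ then $\Phi_I$ consists only of short roots $\pm(e_i-e_j)$, $i,j\in I$, hence is of type ${\sf A}_{|I|-1}$. The subgroup $\spann<\bg X_\alpha\mid\alpha\in\Phi_I>$ is precisely the image of the standard embedding $\bt{SL}_{|I|}\hookrightarrow\bt{Sp}_{2l}$ acting as $g$ on $\spann<e_i:i\in I>$ and as $(g^T)^{-1}$ on $\spann<f_i:i\in I>$; adjoining $\bg T_I$ supplies the missing scalar dimension and yields $\bg G_I\cong\bt{GL}_{|I|}$. For $I=J_{-1}$, $\Phi_I$ additionally contains all long roots $\pm 2e_i$ for $i\in J_{-1}$, giving a full ${\sf C}_{|J_{-1}|}$ subsystem, and $\bg G_{J_{-1}}\cong\bt{Sp}_{2|J_{-1}|}$ on $\spann<e_i,f_i:i\in J_{-1}>$. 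Passing to $F$-fixed points yields the stated isomorphisms.

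For (b), the definitions of $J_d$ and $\mathcal{O}_d$ immediately give $\underline{l}=\bigsqcup_{I\in\mathcal{O}}I$, hence $\bg T=\prod_I\bg T_I$ and $\Phi'=\bigsqcup_I\Phi_I$ with distinct $\Phi_I$ supported on disjoint subsets of coordinates. For $\alpha\in\Phi_I$ and $\beta\in\Phi_{I'}$ with $I\neq I'$, disjointness of supports forces $\alpha\pm\beta\notin\Phi$, so Chevalley's commutator formulas give $[\bg X_\alpha,\bg X_\beta]=1$; likewise $\bg T_I$ centralizes $\bg X_\beta$ since the coroots $2e_i^\vee$ for $i\in I$ pair trivially with roots supported on $I'$. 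The $\bg G_I$ therefore pairwise commute and together generate $\bg L$, while the torus decomposition combined with the disjoint-support structure of root groups forces trivial pairwise intersections. This gives $\bg L=\prod_I\bg G_I$ as a direct product, which $F$ preserves since it stabilizes each factor.

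For (c), since $\wt{\bg L}=\wt{\bg T}\bg L$ and $L$ acts by inner automorphisms on each direct factor, it suffices to analyze conjugation by $\wt t\in\wt{\bg T}^F$. Realizing $\wt{\bg G}=\bt{CSp}_{2l}$ on the same symplectic basis, $\wt t$ acts by $e_i\mapsto\lambda_ie_i$, $f_i\mapsto\mu\lambda_i^{-1}f_i$, where $\mu\in\F_q^\times$ is the conformal multiplier of $\wt t$ and each $\lambda_i\in\F_q^\times$ since $\wt t$ is $F$-fixed. Conjugating $\bg x_{e_i-e_j}(s)$ by $\wt t$ yields $\bg x_{e_i-e_j}((\lambda_i/\lambda_j)s)$, the multiplier $\mu$ cancelling by the inverse-transpose action of the GL embedding on the $f$'s, and this matches exactly conjugation by $D_I:=\diag(\lambda_i)_{i\in I}\in G_I$. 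Hence for $I\neq J_{-1}$, $\wt t$ acts as $\Inn(D_I)$ on $G_I$. For $I=J_{-1}$, the restriction of $\wt t$ to $\spann<e_i,f_i:i\in J_{-1}>$ lies in $\bt{CSp}_{2|J_{-1}|}(q)$ with multiplier $\mu$, and hence induces a diagonal automorphism of $G_{J_{-1}}=\bt{Sp}_{2|J_{-1}|}(q)$ by definition. The one subtle point to check carefully is the cancellation of $\mu$ in the GL case, which relies essentially on the inverse-transpose structure of the embedding $\bt{GL}_{|I|}\hookrightarrow\bt{Sp}_{2l}$.
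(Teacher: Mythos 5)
Your proposal is correct and follows the approach the paper indicates: it explicitly says these facts "are easily checked by use of the Steinberg relations or the realization of $G$ as $\mathrm{Sp}_{2l}(q)$," and you carry out exactly that verification on the standard symplectic basis, including the key cancellation of the conformal multiplier $\mu$ in the $\mathrm{GL}$ factors that makes (c) work. No gaps.
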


\begin{lemma}\label{lem34_C}
	Let $\bg{h}_I(-1):=\prod_{j\in I}\bg{h}_{2e_j}(-1)$ for $I\subseteq \underline l$, 
	\begin{align}
	\label{def_HC}
	H:=\Spann<\bg{h}_I(-1)|I \in \mathcal{O}> \text{ and } H_d=\Spann<\bg{h}_I(-1)| I\in\mathcal{O}_d> \, \text{($d\in \bb{D}$).}
	\end{align}
	Then  $H=H_{-1}\times H_1\times H_2\times \cdots\times H_l$ and  $H\leq \Zent(L)$
\end{lemma}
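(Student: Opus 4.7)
The plan is to verify the two assertions in turn, using the disjointness of the index sets $J_d$ together with the standard Chevalley commutator formula
\[\bg{h}_\alpha(t)\bg{x}_\beta(u)\bg{h}_\alpha(t)^{-1}=\bg{x}_\beta(t^{\langle\beta,\alpha^\vee\rangle}u)\]
from \cite[Thm.~1.12.1]{gorensteinlyonssolomonIII}, combined with the Levi structure recorded in Lemma~\ref{lem33_C}.

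For the direct product decomposition, note that by construction the sets $J_d$ ($d\in\bb{D}$) partition $\underline l$, and each orbit $I\in\mathcal{O}_d$ lies in $J_d$; moreover orbits inside the same $\mathcal{O}_d$ are pairwise disjoint. Every generator $\bg{h}_I(-1)$ of $H_d$ is therefore a product of $\bg{h}_{2e_j}(-1)$'s with $j\in J_d$; all these elements lie in the torus $\bg{T}$ and commute pairwise, so $H$ is abelian and the $H_d$ commute pairwise. Realising $\bg{T}$ via diagonal matrices, $\bg{h}_{2e_j}(-1)$ is the element carrying a non-trivial entry only on the $j$-th coordinate pair. Hence any element of $H_d$ has non-trivial entries only on coordinates from $J_d$, while any element of $\prod_{d'\neq d}H_{d'}$ has non-trivial entries only outside $J_d$. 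Disjointness of the $J_d$ then forces $H_d\cap\prod_{d'\neq d}H_{d'}=\{1\}$ for each $d$, giving the internal direct product $H=H_{-1}\times H_1\times\cdots\times H_l$.

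For the centrality claim, I would use the decomposition $L=\prod_{I\in\mathcal{O}}G_I$ from Lemma~\ref{lem33_C}(b) and carry out a root-by-root check. Since $\bg{h}_I(-1)\in\bg{T}_I\subseteq\bg{G}_I$, and each other factor $G_J$ ($J\neq I$) is generated by $\bg{T}_J$ and root subgroups $\bg{X}_\alpha$ with $\alpha$ supported on $J$, which is disjoint from $I$, the Chevalley formula forces $\bg{h}_I(-1)$ to centralise every such $G_J$. It remains to show $\bg{h}_I(-1)\in\Zent(G_I)$. As $\bg{h}_I(-1)$ lies in the abelian $\bg{T}_I$, only its action on root subgroups $\bg{X}_\alpha$ with $\alpha\in\Phi_I$ needs inspection. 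Conjugation scales $\bg{x}_\alpha(u)$ by $(-1)^{\sum_{j\in I}\langle\alpha,(2e_j)^\vee\rangle}=(-1)^{\sum_{j\in I}\langle\alpha,e_j\rangle}$, since $(2e_j)^\vee=e_j$ in type ${\sf C}$. The exponent is even for every $\alpha\in\Phi_I$: a short root $\pm e_i\pm e_k\in\Phi_I$ forces $i,k\in I$ and yields an exponent in $\{-2,0,2\}$, while a long root $\pm 2e_i\in\Phi_I$ gives exponent $\pm 2$. Thus $\bg{h}_I(-1)\in\Zent(G_I)$; equivalently it is the image of $-I_{|I|}$ under $G_I\cong\bt{GL}_{|I|}(q)$ when $I\neq J_{-1}$, and of $-I_{2|J_{-1}|}$ under $G_{J_{-1}}\cong\bt{Sp}_{2|J_{-1}|}(q)$, both manifestly central.

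No significant obstacle is anticipated, as the whole argument is bookkeeping with the Chevalley relations and the partition structure of $\mathcal{O}$. The one point meriting mild care is the long-root case inside $G_{J_{-1}}$, which is precisely the reason $\bg{h}_I(-1)$ is defined as the full product $\prod_{j\in I}\bg{h}_{2e_j}(-1)$ rather than a single factor.
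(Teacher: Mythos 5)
Your proof is correct and carries out precisely the verification the paper leaves implicit: the paper merely states that this lemma (along with Lemma~\ref{lem33_C}) is ``easily checked by use of the Steinberg relations or the realization of $G$ as $\mathrm{Sp}_{2l}(q)$'' in \cite[2.7]{gorensteinlyonssolomonIII}. Your support argument on the partition $\underline l=\bigsqcup_d J_d$ inside the torus for the direct product, and the Chevalley computation showing $\sum_{j\in I}\langle\alpha,(2e_j)^\vee\rangle$ is even for $\alpha\in\Phi_I$ for centrality, are exactly the intended routine check.
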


We keep the same notations as before. Recall that we identify $W_{\Phi}$ with the group $\Sym_{\pm \underline l}$ defined in \cite[1.8.8]{gorensteinlyonssolomonIII}.

\begin{proposition}\label{prop64_C} We have $N/L\cong  W_{\ov\Phi_1}\times \prod_{d\geq 2} \bt{Stab}_{W_{\ov \Phi_d}}(\Phi_d)$. Moreover $$\operatorname{Stab}_{W_{\ov  \Phi_d}}(\Phi_d)= \left (W_{\Phi_d} \times \Spann<\prod_{i\in I} (i,-i)| I\in \mathcal{O}_d >\right )\rtimes \Sym_{\mathcal{O}_d} $$ for $2\leq d\leq l$. 
\end{proposition}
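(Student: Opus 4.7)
The plan is first to identify $N/L$ with the classical relative Weyl group quotient $\operatorname{Stab}_{W_\Phi}(\Phi')/W_{\Phi'}$. This holds because $\bg{L}$ is a standard Levi and $F$ acts trivially on $W_\Phi$ (type ${\sf C}_l$ admits no non-trivial diagram automorphism), so by Lang's theorem $(\NNN_{\bg G}(\bg L)/\bg L)^F \cong \operatorname{Stab}_{W_\Phi}(\Phi')/W_{\Phi'}$. Thus the task reduces to decomposing this quotient inside $W_\Phi = \Sym_{\pm \underline l}$.

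Next, I would show $\operatorname{Stab}_{W_\Phi}(\Phi') = \prod_{d \in \bb D} \operatorname{Stab}_{W_{\ov\Phi_d}}(\Phi_d)$, sitting inside $\prod_{d \in \bb D} W_{\ov\Phi_d}$. The two key observations are: (i) $W_\Phi$ preserves root lengths, which separates the long-root type-${\sf C}$ component $\Phi_{-1}$ from the short-root type-${\sf A}$ components $\Phi_d$ ($d \geq 2$); and (ii) for distinct $d \geq 2$ the irreducible components of $\Phi_d$ have distinct cardinalities, so no element of $W_\Phi$ can mix them. Hence any $w$ stabilizing $\Phi'$ preserves each support $J_d$ setwise and restricts to an element of each factor $\operatorname{Stab}_{W_{\ov\Phi_d}}(\Phi_d)$.

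For the individual factors: when $d = -1$ one has $\Phi_{-1} = \ov\Phi_{-1}$ (the type-${\sf C}_m$ subsystem exhausts all roots supported in $J_{-1}$), so $\operatorname{Stab}_{W_{\ov\Phi_{-1}}}(\Phi_{-1}) = W_{\Phi_{-1}}$, which is entirely absorbed by the quotient by $W_{\Phi'}$; when $d = 1$ the set $\Phi_1$ is empty and the stabilizer is the full $W_{\ov\Phi_1}$. For $d \geq 2$, the substantive combinatorial step is to analyze the hyperoctahedral action on a single ${\sf A}_{d-1}$-subsystem $\Phi_I = \{e_i - e_j : i \neq j \in I\}$: writing $\sigma \in \Sym_{\pm I}$ as a signed permutation $(\tau,\epsilon)$ with $\tau \in \Sym_I$ and $\epsilon \colon I \to \{\pm 1\}$, the condition $\sigma(\Phi_I) \subseteq \Phi_I$ forces $\epsilon$ to be constant on $I$, since any mixed sign pattern would send some $e_i - e_j$ to $\pm(e_k + e_m) \notin \Phi_I$. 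This pins down the per-orbit stabilizer as $\Sym_I \times \spann<\prod_{i \in I}(i,-i)>$, and permuting the orbits via $\Sym_{\mathcal O_d}$ yields the claimed semidirect-product structure. Taking the quotient by $W_{\Phi'} = W_{\Phi_{-1}} \times \prod_{d \geq 2} W_{\Phi_d}$ then produces the asserted decomposition of $N/L$. The main obstacle is the combinatorial sign-pattern analysis for $d \geq 2$; the remaining steps are routine bookkeeping with the relative-Weyl-group formalism and the length/size separation of components.
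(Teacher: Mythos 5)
Your approach is essentially the same as the paper's (whose proof is terse and leaves the combinatorics to the reader). Both reduce to the identification $N/L\cong \NNN_{W_\Phi}(W_{\Phi'})/W_{\Phi'}=\operatorname{Stab}_{W_\Phi}(\Phi')/W_{\Phi'}$ via Carter's result on normalizers of standard Levi subgroups (here $F$ acts trivially on $W_\Phi$), and you correctly carry out the ``standard'' type-$\sf C$ computation: the stabilizer factors along the supports $\pm J_d$ since root lengths separate $\Phi_{-1}$ from the short-root type-$\sf A$ summands and the distinct component sizes $d(d-1)$ separate the $\Phi_d$ for different $d\geq 2$; the equality $\ov\Phi_{-1}=\Phi_{-1}$ absorbs the long-root factor; and for $d\geq 2$ the uniform-sign condition pins down $\operatorname{Stab}_{W_{\ov\Phi_d}}(\Phi_d)$ as stated. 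One small flag: after quotienting by $W_{\Phi'}=W_{\Phi_{-1}}\times\prod_{d\geq 2}W_{\Phi_d}$, what your argument actually yields is $W_{\ov\Phi_1}\times\prod_{d\geq 2}\bigl(\operatorname{Stab}_{W_{\ov\Phi_d}}(\Phi_d)/W_{\Phi_d}\bigr)$, which by the semidirect-product description equals $W_{\ov\Phi_1}\times\prod_{d\geq 2}\bigl(\Spann<\textstyle\prod_{i\in I}(i,-i)|I\in\mathcal{O}_d>\rtimes\Sym_{\mathcal{O}_d}\bigr)$ --- the normal factor $W_{\Phi_d}\leq W_{\Phi'}$ is absorbed. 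So the first displayed isomorphism of the proposition should be read with $W_{\Phi_d}$ quotiented out; this is a minor imprecision in the statement itself (consistent with the paper's own use in Proposition~\ref{prop56_C}, where $\rho(V_d)$ is the complement to $W_{\Phi_d}$ inside $\operatorname{Stab}_{W_{\ov\Phi_d}}(\Phi_d)$), and your last sentence carries the same imprecision through without noting it.
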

\begin{proof}This follows from  $N/L\cong \NNN_N(\bg T)/\NNN_L(\bg T)\cong\NNN_{W} (W_{\Phi'})/W_{\Phi'}$, see \cite[9.2.2]{Carter2}. The computation of stabilizers in root systems of type $\sf C$ is standard.
\end{proof}

\begin{notation}[Introduction of $V_d$]\label{not38_C}
	We write $\bg{n}_i:=\bg{n}_{\alpha_i}(-1)$ whenever $\alpha_1=2e_1$ and $\alpha_i=e_i-e_{i-1}$ ($2\leq i \leq l$). Note that the elements $\{ \bg{n}_i|1\leq i \leq a\}$ satisfy the braid relations of type ${\sf C}_a$, see for example \cite[9.3.2]{Springer98}. 
	
	For $d\in \underline l$, let $a_d:=|\mathcal{O}_d|$, $I_{d,j}$ ($1\leq j \leq a_d$) the elements of $\mathcal{O}_d$ and $I_{d,j}(k)\in I_{d,j}$ ($1\leq k \leq d$) the elements of $I_{d,j}$. For each $ k\in \underline d$ we fix $$\pi_k: \underline {a_d}\rightarrow J_d \text{ with } j\mapsto  I_{d,j}(k) \text{ and } m_k:=\prod_{j\in \uad} \bg{n}_{e_j -e_{\pi_k(j)}}(1)\in \bg{G}.$$
	For $j\in \uad$ we define 
	$$ n_j^{(d)}:=\prod_{k\in\underline d}  \bg{n}_j^{m_k}.$$
	Alternatively we write also $n_{I_{d,1}}$ for $n _1 ^{(d)}$ and 
	$ n_{I_{d,j-1} ,I_{d,j}}$ for $n_j^{(d)}$. 
\end{notation}

\begin{proposition}\label{prop56_C}
	For $d\in \bb{D}$ let 
	\begin{align}\label{def_HC_VC}
	V_d:=\Spann< n_j^{(d)} | j\in \uad>  \text{ and } V:=\Spann<V_d| d\in\bb{D}>.
	\end{align}
	\begin{thmlist}\itemsep0cm
		\item \label{prop56a_C}
		$n_j^{(d)}=\begin{cases} \prod_{k\in I_{d,1}} \bg{n}_{e_k}(\pm 1)& \text{if }j=1,\\
		\prod_{k\in \underline d} \bg{n}_{e_{I_{d,j-1}(k)}-e_{I_{d,j}(k)}}(\pm 1)& \text{ otherwise },
		\end{cases}$
		
		 for at least one choice of the signs above;
		\item $[E,V]=1$;
		\item $N=LV$;
		\item \label{braidonjd_C}
		the elements $\{ n_j^{(d)} \mid  j\in \uad\}$ satisfy the braid relations of type ${\sf C}_{a_d}$;
		\item \label{prop66_c_C} $[V_d, V_{d'}]= 1$ for every $d,d'\in \bb{D}$ with $d\neq d'$.
	\end{thmlist}
\end{proposition}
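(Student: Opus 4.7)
The plan is to verify the five claims by direct calculation in the Chevalley presentation of $\bg G$, exploiting the disjointness of the index sets $J_d$ ($d \in \bb D$). I would start with part (a). The factors of $m_k = \prod_{j \in \uad}\bg{n}_{e_j - e_{\pi_k(j)}}(1)$ have pairwise disjoint supports in $\underline l$ (since the $\pi_k(j) = I_{d,j}(k)$ lie in distinct orbits $I_{d,j}$), so they commute and $m_k$ acts on the Weyl group as the product of commuting transpositions $(j, I_{d,j}(k))$. The standard conjugation rule for Chevalley $\bg n$'s then yields $\bg{n}_1^{m_k} = \bg{n}_{2e_{I_{d,1}(k)}}(\pm 1)$ and $\bg{n}_j^{m_k} = \bg{n}_{e_{I_{d,j}(k)} - e_{I_{d,j-1}(k)}}(\pm 1)$ for $j \geq 2$. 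For fixed $j$ and varying $k \in \underline d$, these roots again have pairwise disjoint supports, so the $\bg n$'s commute by the Chevalley commutator formula and the product $n_j^{(d)}$ collapses into the asserted form.

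Parts (b), (c), and (e) then follow quickly. For (b), $F_0(\bg{n}_\alpha(t)) = \bg{n}_\alpha(t^p)$ and $(\pm 1)^p = \pm 1$, so by (a) every generator of $V$ is $F_0$-fixed. For (e), any roots $\alpha \in \ov\Phi_d$ and $\beta \in \ov\Phi_{d'}$ with $d \neq d'$ have disjoint supports in $J_d, J_{d'}$; inspection of the list $\pm 2e_i, \pm e_i \pm e_j$ of type-$\sf C$ roots shows that no positive integer combination $i\alpha + j\beta$ is again a root, so $\bg{x}_{\pm \alpha}$ commutes with $\bg{x}_{\pm\beta}$ by Chevalley and hence $\bg{n}_\alpha(\pm 1)$ commutes with $\bg{n}_\beta(\pm 1)$. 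For (c), I compute the images of the $n_j^{(d)}$ in $N/L$ and compare with the decomposition in \prettyref{prop64_C}: for $d \geq 2$ the image of $n_1^{(d)}$ is the sign change $\prod_{i \in I_{d,1}}(i,-i)$, while for $j \geq 2$ the image of $n_j^{(d)}$ is the block transposition $\prod_{k \in \underline d}(I_{d,j-1}(k), I_{d,j}(k))$, which acts on $\mathcal{O}_d$ as $(I_{d,j-1}, I_{d,j})$; conjugating the first by the latter produces every $\prod_{i \in I_{d,j}}(i,-i)$, while the block transpositions generate $\Sym_{\mathcal{O}_d}$. For $d=1$ the $n_j^{(1)}$ are simple reflections generating $W_{\ov\Phi_1}$ of type $\sf C_{|J_1|}$, and for $d = -1$ the generators of $V_{-1}$ already lie in $L$. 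Thus $N = LV$.

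The main obstacle is part (d). Non-adjacent relations $[n_i^{(d)}, n_{i'}^{(d)}] = 1$ for $|i - i'| \geq 2$ follow from a disjoint-support argument analogous to (e), applied to the index sets $I_{d,i-1}, I_{d,i}, I_{d,i'-1}, I_{d,i'}$. For adjacent $i, i+1$ with $i \geq 2$, for each $k \in \underline d$ the pair $(e_{I_{d,i}(k)} - e_{I_{d,i-1}(k)},\, e_{I_{d,i+1}(k)} - e_{I_{d,i}(k)})$ generates a type-$\sf A_2$ subsystem in which the usual $\sf A_2$ braid relation among the Chevalley $\bg n$'s holds (see \cite[9.3.2]{Springer98}); the subsystems for distinct $k$ have disjoint supports, so the factors commute and the full braid relation $(n_i^{(d)} n_{i+1}^{(d)})^3 = (n_{i+1}^{(d)} n_i^{(d)})^3$ lifts from each $k$-factor to the product. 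The same argument handles the pair $(n_1^{(d)}, n_2^{(d)})$ with type $\sf C_2$ in place of $\sf A_2$, giving the $4$-fold braid relation. Careful tracking of the signs produced by each conjugation $\bg n_j^{m_k}$ is the main bookkeeping task.
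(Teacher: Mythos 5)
Your proof takes essentially the same route as the paper. The paper dispatches parts (a), (b), (d) in one line (``By the definition together with the Steinberg relations it is straightforward computation''), and you are simply fleshing that computation out; for (c) you, like the paper, compute the images $\rho(V_d)$ in the Weyl group and match them against the decomposition of $N/L$ in Proposition~\ref{prop64_C}; for (e) your argument is verbatim the paper's (orthogonality of $\ov\Phi_d$ and $\ov\Phi_{d'}$ plus the Chevalley commutator formula). You also correctly read the long root in the $j=1$ formula of (a) as $2e_k$ rather than the $e_k$ that appears in the statement.

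Two small imprecisions are worth flagging. Your claim that the factors of $m_k$ have pairwise disjoint supports is only half-justified: you note that the targets $\pi_k(j)=I_{d,j}(k)$ lie in distinct orbits, but you do not rule out a coincidence $j'=\pi_k(j)$ with $j,j'\in\underline{a_d}$, which is possible whenever $J_d$ meets $\{1,\dots,a_d\}$; one needs either a convention on the labeling of $J_d$ and of the $I_{d,j}(k)$, or a more careful bookkeeping of how the image of $m_k$ in $W_\Phi$ acts on $\underline{a_d}$. Secondly, in (d) you write the $\mathsf{A}_2$-braid relation as $(n_i n_{i+1})^3=(n_{i+1}n_i)^3$; the relevant relation is $n_in_{i+1}n_i=n_{i+1}n_in_{i+1}$, and the two are not equivalent in the extended Weyl group where $n_\alpha^2\neq 1$. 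Your factorization-over-$k$ argument lifts the correct braid relation just as well, so the slip is only one of bookkeeping, not of strategy.
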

\begin{proof}
	The elements $\{ \bg{n}_k \mid k\in \uad \}$ satisfy the braid relations as recalled in Notation~\ref{not38_C}. By the definition together with the Steinberg relations it is straighforward computation to check that the elements $\{ n_j^{(d)}\mid j\in \uad\}$ satisfy parts (a), (b) and (d). 
	
	Denote $\rho\colon \NNN_{\bg G}({\bg T})\to W_\Phi$ the canonical surjection. For $d\in\bb{D}$ we see that $\rho(V_d)W_{\Phi_d} =\operatorname{Stab}_{W_{\overline  \Phi_d}}(\Phi_d)$ since 
	$$\rho(V_d)=\Spann<\prod_{i\in I} (i,-i)| I\in \mathcal{O}_d> \rtimes  \Sym_{\mathcal{O}_d},$$ 
	whenever $d\in\bb{D}\setminus \{-1\}$.  This implies (c) by Proposition \ref{prop64_C}.
	
	Note that $\rho(V_{d})\leq W_{ \overline  \Phi_{d}}$. Since $ \overline  \Phi_{d}\perp \overline  \Phi_{d'}$ and no non-trivial linear combination of them is a root, $[V_d, V_{d'}]= 1$ by the commutator formula. \end{proof}

For the later proof of assumption \ref{cor_toolext} of Proposition~\ref{cor_tool} we need to analyze the action of $V$ on $L$. 

\begin{lemma}[The action of $V$ on $L$]\label{VonL}
	 
	\begin{thmlist}
 		\item Let $I,I' \in \mathcal{O}\setminus\{J_{-1}\}$ such that $n_{I,I'}$ is defined and $I''\in\mathcal{O}$. Then $n_{I,I'}^2\in \Zent(L)$	and
		$$[ n_{I,I'} , G_{I''}]=
		\begin{cases}
		G_I& \text{if } I''=I',\\
		G_{I'}& \text{if } I''=I,\\
		G_{I''}& \text{ otherwise }.\\
		\end{cases}$$
		\item Let $I \in \mathcal{O}\setminus\{J_{-1}\}$ and $I''\in\mathcal{O}$ with $I''\neq I$. Then $ (G_{I''})^{n_I}=G_{I''}$. The element $n_I$ induces on $G_I$ the combination of a graph and an inner automorphism while acting trivially on $G_{I''}$ if $I\neq I''$. 
	\end{thmlist}
\end{lemma}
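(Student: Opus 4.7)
The plan is a direct computation using the Chevalley--Steinberg relations (see \cite[1.12.1]{gorensteinlyonssolomonIII}) combined with the explicit expressions for $n_{I,I'}$ and $n_I$ from Proposition~\ref{prop56_C}(a). Throughout I read the bracket notation $[n,G_{I''}]$ in (a) as shorthand for the conjugate $n\,G_{I''}n^{-1}$, which is consistent with the conjugation formulation used in (b).

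For (a), the image of $n_{I,I'}=\prod_{k\in\underline d}\bg n_{e_{I(k)}-e_{I'(k)}}(\pm 1)$ under $\rho\colon \NNN_{\bg G}(\bg T)\to W_\Phi$ is $w=\prod_k(I(k),I'(k))$, which swaps $I$ and $I'$ index by index and fixes the remaining coordinates. The $d$ factors defining $n_{I,I'}$ involve pairwise orthogonal roots (supported on disjoint coordinate pairs), so they commute, and the identity $\bg n_\alpha(1)^2=\bg h_\alpha(-1)$ gives $n_{I,I'}^2=\prod_k\bg h_{e_{I(k)}-e_{I'(k)}}(-1)\in\bg T$. To show this torus element centralizes $L$, I would evaluate the associated character on every $\bg X_\alpha$ with $\alpha\in\Phi'$: for $\alpha\in\Phi_{I''}$ with $I''\notin\{I,I'\}$ each pairing is zero by disjointness of coordinates, while for $\alpha=e_{I(i)}-e_{I(j)}\in\Phi_I$ a short exponent computation yields $\prod_k(-1)^{\delta_{ik}-\delta_{jk}}=1$, and similarly for $\alpha\in\Phi_{I'}$. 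For the conjugation identities, the fact that $w$ swaps $\Phi_I$ with $\Phi_{I'}$ while fixing each $\Phi_{I''}$ (with $I''\notin\{I,I'\}$), together with the relation $\bg n_\alpha(s)\bg x_\beta(t)\bg n_\alpha(s)^{-1}=\bg x_{s_\alpha(\beta)}(\pm t)$ and the analogous formulas on $\bg T$, yields $n_{I,I'}\bg G_{I'}n_{I,I'}^{-1}=\bg G_I$, $n_{I,I'}\bg G_I n_{I,I'}^{-1}=\bg G_{I'}$, and $n_{I,I'}\bg G_{I''}n_{I,I'}^{-1}=\bg G_{I''}$ otherwise; passing to $F$-fixed points gives the three equalities.

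For (b), Proposition~\ref{prop56_C}(a) gives $n_I=\prod_{k\in I}\bg n_{2e_k}(\pm 1)$, of Weyl image $\prod_{k\in I}(k,-k)$. For any $I''\neq I$ the coordinates of $I''$ are disjoint from those of $I$, so every root of $\Phi_{I''}$ is orthogonal to every $2e_k$ with $k\in I$; a direct application of the Chevalley commutator formula shows $\bg X_{\pm 2e_k}$ and $\bg X_\beta$ commute for every $\beta\in\Phi_{I''}$, whence $\bg n_{2e_k}(\pm 1)\in\spann<\bg X_{2e_k},\bg X_{-2e_k}>$ centralizes $\bg X_\beta$. A parallel verification on $\bg T_{I''}$ (via the Weyl action $s_{2e_k}(2e_i)=2e_i$ for $i\neq k$) then gives that $n_I$ centralizes all of $\bg G_{I''}$, so in particular acts trivially on $G_{I''}=\bg G_{I''}^F$. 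On $\bg G_I$ (of type $\sf A_{|I|-1}$, since $I\neq J_{-1}$), the restriction of $w$ to $\Phi_I=\{e_i-e_j:i,j\in I,\ i\neq j\}$ sends each root to its negative; this equals the composition of the longest element of $W_{\Phi_I}$ with the non-trivial diagram automorphism of type $\sf A_{|I|-1}$, which on $G_I\cong\bt{GL}_{|I|}(q)$ realises the standard graph automorphism (inverse-transpose) up to the inner automorphism that corrects $n_I$ to a pure lift of $w$.

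The main obstacle I expect is the sign bookkeeping in the Chevalley--Steinberg relations, particularly verifying that ``commutes up to a sign'' for orthogonal roots is actually ``commutes'' on the nose, and that the torus corrections from $n_I$ being a specific lift produce only inner automorphisms. Working inside the matrix model of $G$ as $\bt{Sp}_{2l}(q)$ referenced at the start of the subsection provides the cleanest way to verify these.
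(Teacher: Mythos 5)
Your proof is correct and takes essentially the same approach as the paper's one-line justification (direct computation via the Steinberg relations and the explicit forms in Proposition~\ref{prop56_C}(a)); you have supplied the routine bookkeeping the paper omits, and your reading of $[n_{I,I'},G_{I''}]$ as conjugation is the intended one.
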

\begin{proof}
	The claims follow from Proposition~\ref{prop56_C}\ref{prop56a_C} using the Steinberg relations. \end{proof}

\subsection{Cuspidal characters of $L$ and their extensions}
In the following we verify the character theoretic assumptions necessary for applying Proposition~\ref{cor_tool}.
 
\begin{proposition}\label{prop4.10} There exists an $NE$-stable $\wt L$-transversal $\bb{T}$ in $\cusp(L)$ such that $(\wt L NE)_\la=\wt L_\la (NE)_\la$ for every $\la\in\bb{T}$.

\end{proposition}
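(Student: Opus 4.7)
The plan is to exploit the direct product decomposition $L=\prod_{I\in\mathcal O}G_I$ from Lemma~\ref{lem33_C}(b) to reduce the construction of $\bb T$ to compatible choices on each factor, with the main content landing on the symplectic factor $G_{J_{-1}}$. Writing $\la=\la_{-1}\otimes\bigotimes_{I\neq J_{-1}}\la_I$ with $\la_I\in\cusp(G_I)$, Lemma~\ref{lem33_C}(c) says $\wt L$ acts only on the first tensor factor, so an $\wt L$-transversal in $\cusp(L)$ amounts to an $\wt L$-transversal in $\cusp(G_{J_{-1}})$ together with arbitrary choices of the $\la_I$'s. Moreover, by Proposition~\ref{prop64_C}, $N/L$ has no $d=-1$ contribution, hence $N$ acts on $G_{J_{-1}}$ through $W_{\Phi_{-1}}\leq G_{J_{-1}}$ only, i.e.\ by inner automorphisms. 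Thus among elements of $NE$ outside $L$, only $E$ acts non-trivially on $\la_{-1}$.

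For the $\mathrm{GL}$-type factors, for each $NE$-orbit on $\{I\in\mathcal O\mid I\neq J_{-1}\}$ I would pick a representative $I_0$ and use the parametrization of $\cusp(\mathrm{GL}_d(q))$ by Galois orbits of regular characters of a Coxeter torus to select $\la_{I_0}\in\cusp(G_{I_0})$ whose stabilizer in $(NE)_{I_0}$ already coincides with the stabilizer of its $(NE)_{I_0}$-orbit. Then transport $\la_{I_0}$ to the other $I$ in the orbit by a fixed transversal of $NE/(NE)_{I_0}$. Compatibility with the dualizing involution coming from the sign-change subgroup of $N$ (Proposition~\ref{prop56_C}\ref{prop56a_C}) and with the field generator $F_0$ follows because these two involutions commute on the Coxeter torus and preserve its regular characters.

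For the symplectic factor $G_{J_{-1}}\cong\mathrm{Sp}_{2|J_{-1}|}(q)$ the task reduces to producing an $E$-stable $\wt L$-transversal in $\cusp(G_{J_{-1}})$ with $(\wt LE)_{\la_{-1}}=\wt L_{\la_{-1}}E_{\la_{-1}}$. This is the technical heart: via the Lusztig--Jordan parametrization, a cuspidal character of $\mathrm{Sp}_{2m}(q)$ corresponds to a semisimple class in the dual group together with a unipotent cuspidal datum at its centralizer; the $\wt L$-action multiplies the semisimple parameter by elements of $\Zent(\wt{\bg L})^F$, while $E$ acts by Galois twisting of parameters. In each $\wt L$-orbit one then selects a distinguished representative whose orbit-stabilizer in $E$ already fixes it pointwise, in the spirit of \cite{cabanesspathtypeC} and \cite{MalleSpathMcKay2}.

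Finally, let $\bb T$ be the set of products of the chosen symplectic characters with the chosen $\mathrm{GL}$-families; $NE$-stability of $\bb T$ is immediate from the per-factor constructions. To verify the stabilizer identity, decompose any $x=tne\in(\wt L NE)_\la$ with $t\in\wt L$, $n\in N$, $e\in E$, and split $\la^x=\la$ by tensor factor. On the $\mathrm{GL}$-part $t$ acts trivially, so $ne$ fixes $\bigotimes_{I\neq J_{-1}}\la_I$, and the choices made force $\la_I^{ne}=\la_I$ for every $I\neq J_{-1}$. On the symplectic part $n$ acts by an inner automorphism and hence trivially on characters, so the condition reduces to $te$ fixing $\la_{-1}$, which by the symplectic step splits as $t\in\wt L_{\la_{-1}}$ and $e\in E_{\la_{-1}}$. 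Combining, $t\in\wt L_\la$ and $ne\in(NE)_\la$, as required. The main obstacle is the symplectic step: the explicit stabilizer analysis of Lusztig parameters of cuspidal characters of $\mathrm{Sp}_{2m}(q)$ under combined diagonal and Frobenius action is where the real work lies; the $\mathrm{GL}$-factor choices and the final assembly are comparatively formal.
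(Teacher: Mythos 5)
Your overall strategy -- reduce via the direct product $L=\prod_{I\in\mathcal O}G_I$ of Lemma~\ref{lem33_C}(b), handle the symplectic factor $G_{J_{-1}}$ as the technical heart, and treat the $\mathrm{GL}$-factors as formal -- is the same as the paper's, and your verification of the stabilizer identity at the end is essentially the paper's argument. However, there is a genuine gap in your construction of $\bb T$ on the $\mathrm{GL}$-type factors.

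Because $\wt L$ acts only by \emph{inner} automorphisms on each $G_I$ with $I\neq J_{-1}$ (Lemma~\ref{lem33_C}(c)), it acts trivially on $\cusp(L_+)$ where $L_+:=\prod_{I\neq J_{-1}}G_I$. Consequently the $\wt L$-orbits on $\cusp(L)=\cusp(G_{J_{-1}})\times\cusp(L_+)$ are precisely $O\times\{\chi_+\}$ for $O$ an $\wt L$-orbit on $\cusp(G_{J_{-1}})$ and $\chi_+$ ranging over \emph{all} of $\cusp(L_+)$. A $\wt L$-transversal in $\cusp(L)$ must therefore contain every element of $\cusp(L_+)$ as a second tensor factor; the paper's transversal is simply $\bb T=\cusp(L\mid\bb T_{-1})$, i.e.\ all cuspidals lying over a chosen $E$-stable $\wt L$-transversal $\bb T_{-1}\subseteq\cusp(G_{J_{-1}})$. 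Your construction instead selects a single $\la_{I_0}$ per $NE$-orbit on $\mathcal O\setminus\{J_{-1}\}$ and transports it around, producing a set of the form $\bb T_{-1}\times\{\chi_+\}$ (or a small union of such sets), which misses most $\wt L$-orbits and is not a transversal. No Coxeter-torus selection is needed on the $\mathrm{GL}$-side at all; once you take everything, $NE$-stability is automatic because $N$ and $E$ merely permute the $G_I$, and the stabilizer identity on the $\mathrm{GL}$-part follows from $\wt L_{\chi_+}=\wt L$ without any special choices. Relatedly, the intermediate claim that ``the choices made force $\la_I^{ne}=\la_I$ for every $I\neq J_{-1}$'' is not needed and is false in general (an element of $N$ may permute the $I$'s); what you actually need, and what the paper uses, is only that $ne$ fixes the \emph{product} $\chi_+$.

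For the symplectic factor, you correctly identify the requirement (an $E$-stable $\wt L$-transversal $\bb T_{-1}$ in $\cusp(G_{J_{-1}})$ with the stabilizer factorization), but your Lusztig--Jordan outline remains a sketch. The paper dispatches this at once by citing \cite[3.1]{cabanesspathtypeC} together with Remark~\ref{starCond} (which recasts the stabilizer condition as the existence of an $E$-stable $\wt G$-transversal). Your observations that $N$ acts only by inner automorphisms on $G_{J_{-1}}$ (via Proposition~\ref{prop64_C}) and that only $E$ acts nontrivially on $\la_{-1}$ outside $L$ are correct and match the paper.
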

\begin{proof} 	Note first that the cuspidal characters of $L$ are the products of cuspidal characters of the $G_I$'s ($I\in \mathcal{O}$). We choose first a $\wt L$-transversal in $\cusp(G_{-1})$ that is $E$-stable. Such a transversal $\bb{T}_{-1}$ exists by \cite[3.1]{cabanesspathtypeC} and Remark~\ref{starCond}. We also know by Lemma~\ref{lem33_C} that $\wt L$ acts by inner automorphisms on all other direct factors $G_I$, so the set $\bb{T}=\cusp(L\mid \bb{T}_{-1})$ is an $E$-stable $\wt L$-transversal as required.

	Recall that for $\chi_{-1}\in\bb{T}_{-1}$ we have $V_{\chi_{-1}}=V$ and $(\wt L E)_{\chi_{-1}}=\wt L_{\chi_{-1}} E_{\chi_{-1}}$, hence altogether we see $(\wt L NE)_{\chi_{-1}}= \wt L_{\chi_{-1}} (NE)_{\chi_{-1}}$. 
	
	Let $\la\in\bb{T}$. Let $L_+:=\spann<G_d\mid d\in \bb{D}\setminus\{-1\}>$ and $\chi_+\in\Irr(L_+\mid \la)$. We have seen that $\wt L$ acts by inner automorphisms on $L_+$, hence stabilizes $\chi_{+}$ and therefore $(\wt L NE)_{\chi_{+}}= \wt L (NE)_{\chi_{+}}$. Since $\la=\chi_{-1}\chi_+$ for some $\chi_{-1}\in\bb{T}_{-1}$, the required equation holds for every $\la\in \Irr(L\mid \bb{T}_{-1})\cap \cusp(L)$. 
	\end{proof}

In the next step we show the following for the groups $H\lhd V$ from Lemma~\ref{lem34_C} and Proposition~\ref{prop56_C}. 
\begin{proposition}\label{prop_maxextHV_C} Every element of $\Irr(H)$ extends to its stabilizer in $V$. In particular there exists a $VE$-equivariant extension map (see Definition~\ref{ExtMap}) \wrt $H\lhd V$ for $\Irr(H)$. 
\end{proposition}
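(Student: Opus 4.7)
The plan is to exploit the product decomposition of $H$ and the commuting structure of the $V_d$ in order to reduce the extension problem to a single index $d\in\bb{D}$ at a time, and then to produce explicit local extensions. Recall from Lemma~\ref{lem34_C} that $H=H_{-1}\times H_1\times H_2\times\cdots\times H_l$, with each $H_d$ elementary abelian of order $2^{a_d}$, and from Proposition~\ref{prop56_C}\ref{prop66_c_C} that $[V_d,V_{d'}]=1$ for $d\neq d'$. Because $H_d$ is generated by torus elements whose support lies in $\spann<e_k\mid k\in J_d>$, while $V_{d'}$ is generated by Chevalley lifts of reflections associated to roots in the orthogonal subspace $\spann<e_k\mid k\in J_{d'}>$, the Steinberg relation $\bg{h}_\beta(s)^{\bg{n}_\alpha(t)}=\bg{h}_{s_\alpha(\beta)}(s)$ forces $[V_{d'},H_d]=1$ as well. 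Consequently, for $\theta=\prod_d\theta_d\in\Irr(H)$, the stabilizer $V_\theta$ is the subgroup of $V$ generated by $\{V_{d,\theta_d}:d\in\bb{D}\}$, and local extensions $\widetilde\theta_d$ of each $\theta_d$ to $V_{d,\theta_d}$ can be combined via the commuting-product structure into an extension $\widetilde\theta$ of $\theta$ to $V_\theta$.

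For each fixed $d$, I would extend $\theta_d$ by exhibiting a section (a complement, if possible) of $H_d$ inside $V_{d,\theta_d}$. When $d\in\{-1,1\}$ the group $H_d$ is cyclic of order at most $2$ and $V_d$ is small, so an extension is immediate and $\theta_d$ extends to a linear character on all of $V_d$. When $d\geq 2$, the natural map $V_d\to\rho(V_d)\subseteq\operatorname{Stab}_{W_{\overline\Phi_d}}(\Phi_d)$ has kernel contained in $\bg{T}$, and the explicit Chevalley formulas of Proposition~\ref{prop56_C}\ref{prop56a_C} together with the braid relations of Proposition~\ref{prop56_C}\ref{braidonjd_C} allow the construction of a Tits-style section $s$ whose image complements $H_d$ inside $V_d$. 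The extension $\widetilde\theta_d$ is then defined by $\widetilde\theta_d(h\cdot s(w)):=\theta_d(h)$ for $h\in H_d$ and $w$ in the Weyl-type quotient, with multiplicativity on $V_{d,\theta_d}$ guaranteed because $s(w)$ acts on $H_d$ by a Weyl element fixing $\theta_d$.

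For the $VE$-equivariance of the resulting extension map, one observes that the construction depends only on the canonical combinatorial data—the partition of $\mathcal{O}_d$ induced by $\theta_d$ together with the orbit labels—and that $E$ acts trivially on $V$ by Proposition~\ref{prop56_C}(b), while $V$ permutes the labeling data. Thus the local choices are automatically compatible with the $VE$-action, yielding the desired $VE$-equivariant extension map.

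The principal obstacle is the construction in the case $d\geq 2$: verifying that the sign choices of the $\bg{n}$-elements in Proposition~\ref{prop56_C}\ref{prop56a_C} can be made coherently so that the Tits-style section $s$ is genuinely a group homomorphism on the relevant stabilizer, and that its image is a true complement to $H_d$ rather than only a set-theoretic lift. This is where the braid relations and the $2$-power structure of $H$ interact most delicately and where the bulk of the sign bookkeeping will be required.
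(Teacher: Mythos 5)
Your first reduction, to a single index $d\in\bb{D}$ via the commuting decomposition $H=\prod_d H_d$, $[V_d,V_{d'}]=1$, is exactly what the paper does. But after that the proposal departs from the paper and, more importantly, the key step does not work.

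First, a factual slip: for $d=1$ you claim $H_1$ is cyclic of order at most $2$. In fact $\mathcal O_1=\{\{j\}\mid j\in J_1\}$, so $H_1=\spann<\bg h_{\{j\}}(-1)\mid j\in J_1>$ is elementary abelian of rank $|J_1|$, which can be arbitrarily large, and $V_1$ is then a copy of a Weyl group of type ${\sf C}_{|J_1|}$ in Chevalley generators. This case is as hard as $d\geq 2$ and cannot be dismissed.

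Second, and centrally: the ``Tits-style section'' whose image is a complement to $H_d$ inside $V_d$ (or even inside $V_{d,\theta_d}$) does not exist in general, and no choice of signs in Proposition~\ref{prop56_C}(a) can fix this. The obstruction is structural, not bookkeeping. One has $(n_1^{(d)})^2=\bg h_{I_{d,1}}(-1)$, a nontrivial generator of $H_d$. Moreover, since $H_d$ is an elementary abelian $2$-group and the Weyl elements in $V_d$ act on $H_d$ only by permuting and inverting the $\bg h_{2e_j}(-1)$'s, in fact $V_d$ centralizes $H_d$. Hence for any $h\in H_d$ one computes $(h\,n_1^{(d)})^2 = h^2 (n_1^{(d)})^2 = (n_1^{(d)})^2 \neq 1$, so $n_1^{(d)}$ cannot be corrected to an involution and no subgroup of $V_{d,\theta_d}$ complements $H_d$. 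The rule $\widetilde\theta_d(h\cdot s(w))=\theta_d(h)$ would then violate multiplicativity precisely when $\theta_d(\bg h_{I_{d,1}}(-1))=-1$, which is exactly the interesting case.

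The paper's actual argument is quite different and is designed to avoid this. After a $V_d$-conjugation one may assume $\theta_d(\bg h_{I_{d,j}}(-1))=-1$ for $j\leq a'$ and $=1$ otherwise. Then $V_{d,\theta_d}=CS$ where $C=\spann<c_I\mid I\in\mathcal O_d>$ is abelian and actually contains $H_d$ (since $c_I^2=\bg h_I(-1)$), while $S=\spann<n_j^{(d)}\mid j\neq a'+1>$ is of type ${\sf A}\times{\sf A}$ with the crucial property $S\cap H_d=\spann<(n_j^{(d)})^2\mid j\neq a'+1>\leq\ker\theta_d$. One extends $\theta_d$ to $C$ by abelianness, and to $H_dS$ by sending $S$ to $1$; these two extensions are then glued into a single extension to $V_{d,\theta_d}$ via \cite[4.1]{spath10b}. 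This decompose-and-glue step is the missing idea in your proposal, and it is exactly what replaces the nonexistent complement.
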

This will imply that the groups $H$ and $V$ satisfy the assumption  \ref{cor_tool}(ii.2) of Proposition~\ref{cor_tool}.
\begin{proof} The second statement is a consequence of the first since $E$ acts trivially on $V$ by Proposition~\ref{prop56_C}(c). So we now show that every element of $\Irr(H)$ extends to its stabilizer in $V$.
	
	By \ref{prop66_c_C} of  Proposition~\ref{prop56_C} it is sufficient to prove that for every $d\in \bb{D}$ any character of $H_d$ extends to its stabilizer in $V_d$. 
	The group $H_d$ is the $a_d$-times central products of groups $\spann< \bg{h}_I(-1)>$ ($ I \in \mathcal{O}_d$). Let $c_1^{(d)}:=\overline n_1^{(d)}$ and 
	\begin{align}\label{defcjd_C}
	c_{I_{d,j}}:=c_j^{(d)}&:= (c_{j-1}^{(d)})^{\onjd}.
	\end{align}
	In addition $n_j^{(d)}$ ($2\leq j\leq a_d$) stabilizes $\{c_I\mid I \in \mathcal{O}_d\}$.
	
	Let $\la_d\in \Irr(H_d)$. Then $\la_d$ is $V_d$-conjugate to a character $  \la_d'$ with 
	$$  \la_d'(\bg{h}_{I_{d,j}}(-1))=\begin{cases}
	-1 & \text{ if } j\leq  a',\\
	\, \ \ 1 &\text{ otherwise} ,\end{cases} $$
	for some $0\leq a'\leq a_d$. We assume that $ \la_d$ is of this form. Then 
	$$ V_{d,  \la_d}= CS \text{ , where } C:= \Spann<c_I| I\in \mathcal{O}_d > \text{ and } S:=\Spann<\onjd| j \in \uad \setminus\{ a'+1\} >.$$
	
	By the Steinberg relations we see that $[c_I,c_{I'} ]=1$ for $I,I'\in\mathcal{O}_d$. Hence one can choose an extension $\wh \la_d$ of $\la_d$ to $ H_d C $ such that 
	$$ \wh\la_d(c_I)=\wh\la_d(c_{I'}) \text{ for }I,I'\in \mathcal{O}_d.$$
	This character is accordingly $S$-stable and hence $V_{d,\la_d}$-stable. 
	
	Since by \ref{braidonjd_C} the elements $\{ n_j^{(d)} \mid 2\leq j \leq a_d\}$ satisfy the braid relations and $\rho(S)$ is the direct product of two symmetric groups, we see that $$S \cap H_d= \Spann<(n_j^{(d)})^2| j \in \uad \setminus\{ a'+1\} >. $$
	Those elements lie in the kernel of $ \la_d$. Hence there exists an extension $\psi$ of $\la_d$ to $ H_d S$ such that $S \leq \ker(\psi)$. 
	According to \cite[4.1]{spath10b} the characters $\psi$ and $\wh \la_d$ define an extension $\wt \psi$ to $V_{d,\wt \la_d}$ that  is $V_{d,  \la_d }$-invariant and extends to $V_{d,\la_d}$. 
\end{proof}

\begin{proposition} \label{propeps_C} Let $\epsilon: V \rightarrow V/H$ be the canonical epimorphism. There exists an $N E$-equivariant extension map \wrt $L\lhd L \rtimes \epsilon(V)$.\end{proposition}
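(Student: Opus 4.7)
The plan is to build the extension map componentwise, exploiting the product decompositions of $L$ and $\epsilon(V)$. By Proposition~\ref{prop56_C}(e) the subgroups $V_d$ commute in pairs, and by Lemma~\ref{VonL} each $\epsilon(V_d)$ acts nontrivially only on the factor $L_d:=\prod_{I\in\mathcal{O}_d}G_I$ of $L$. Consequently $\epsilon(V)=\prod_{d\in\bb{D}}\epsilon(V_d)$ as an internal direct product, and any cuspidal character of $L$ factorizes as $\la=\bigotimes_{d\in\bb{D}}\la_d$ with $\la_d\in\cusp(L_d)$. It thus suffices to construct, for each $d\in\bb{D}$ separately, an $\epsilon(V_d)E$-equivariant extension map for $\cusp(L_d)$ with respect to $L_d\lhd L_d\rtimes\epsilon(V_d)$, and then to tensor the resulting extensions.

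Next I would dispose of the case $d=-1$: since $\Phi_{-1}$ already contains every long root supported on $J_{-1}$, the Weyl-group image $\rho(V_{-1})$ is trivial modulo $W_{\Phi_{-1}}$, so $V_{-1}\leq H L$ and $\epsilon(V_{-1})=1$. There is nothing to extend. For $d\in\underline l$ we have $L_d\cong \mathrm{GL}_d(q)^{a_d}$, and combining the braid-relation description in Proposition~\ref{prop56_C}(d) with Proposition~\ref{prop64_C} identifies $\epsilon(V_d)$ with the Weyl group $W({\sf C}_{a_d})\cong C_2\wr\Sym_{a_d}$, where $\Sym_{a_d}$ permutes the $a_d$ factors of $L_d$ and each $C_2$-generator induces the graph (transpose-inverse) automorphism on the corresponding factor $G_I$ (this is the content of Lemma~\ref{VonL}(b)).

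For this $d\geq 1$ case the plan is to reduce to the type-${\sf A}$ extension theory already in the literature. A cuspidal character $\la_d=\bigotimes_{I\in\mathcal{O}_d}\la_I$ of $L_d$ is determined by a tuple $(\la_I)_I$ of cuspidal characters of $\mathrm{GL}_d(q)$, and standard Clifford theory for wreath products describes its $\epsilon(V_d)$-stabilizer as a direct product, taken over isomorphism classes within the tuple, of partial wreath products $C_2^k\rtimes\Sym_k$, with the $C_2$-factors appearing exactly on those coordinates where $\la_I$ is fixed by the graph automorphism. The type-${\sf A}$ extension theory (see \cite{cabanesspath15,CabanesSpathInductiveMcKayTypeA,broughspath}) supplies an $F_0$-equivariant extension of each such self-dual $\la_I$ to $G_I\rtimes\langle\mathrm{graph}\rangle$. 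Tensoring these extensions over coordinates within each isomorphism class, and combining with the canonical permutation action of $\Sym_k$ on the corresponding tensor factors, defines the required extension of $\la_d$.

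The main technical obstacle I expect is securing simultaneous $\epsilon(V_d)$- and $E$-equivariance: the chosen extensions to $G_I\rtimes\langle\mathrm{graph}\rangle$ must be $F_0$-stable, so that $F_0$ acts compatibly on the tensor product, and they must transform coherently under $\Sym_k$ when coordinates carrying isomorphic $\la_I$ are permuted. Both coherence conditions are precisely what the distinguished (``canonical'') extensions produced in the type-${\sf A}$ papers cited above are designed to satisfy. Finally, since $N=LV$ and $L$ acts trivially on itself by normality, the $\epsilon(V)E$-equivariance of the assembled extension map upgrades automatically to the $NE$-equivariance required in the statement.
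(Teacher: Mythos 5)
Your proposal matches the paper's proof in its essential structure: both decompose over $d\in\bb{D}$, reduce the $d\geq 1$ cases to the wreath-product picture $G_d\,\epsilon(V_d)\cong (G_I\rtimes\spann<\epsilon(c_I)>)\wr\Sym_{a_d}$ with $G_I\cong\mathrm{GL}_d(q)$, settle the single-factor extension problem via type-${\sf A}$ theory (the paper packages this as its Lemma~\ref{lem_F0invext} rather than citing the references you list), and then invoke the representation theory of wreath products to assemble the pieces; the observation that $L$-equivariance is automatic so $\epsilon(V)E$-equivariance gives $NE$-equivariance is likewise implicit in the paper. Two small imprecisions worth flagging, though neither affects the conclusion: Lemma~\ref{VonL}(b) says $n_I$ induces a \emph{combination} of a graph and an inner automorphism of $G_I$, not the pure transpose-inverse map; and your description of the $\epsilon(V_d)$-stabilizer of $(\la_I)_I$ as a direct product of partial wreath products $C_2^k\rtimes\Sym_k$ misses the diagonal elements that pair a transposition $(I,I')$ with graph twists when $\la_{I'}={}^{c}\la_I\neq\la_I$; the correct stabilizer is still handled by the same standard wreath-product extension theory you and the paper both appeal to.
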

In its proof we need the following observation. 
\begin{lemma}\label{lem_F0invext}
	Let $\gamma$ be an automorphism of $\bt{GL}_n(q)$ commuting with the field automorphism $F_0$ of $\bt{GL}_n(q)$. Then there exists a $\spann<\gamma,F_0>$-equivariant extension map \wrt $\bt{GL}_n(q)\lhd \bt{GL}_n(q)\rtimes \spann<\gamma>$.
\end{lemma}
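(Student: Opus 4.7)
The plan is to construct the extension map via the Lusztig--Jordan decomposition of $\Irr(\mathrm{GL}_n(q))$ together with canonical extensions of unipotent characters, which are known to be simultaneously equivariant under automorphisms coming from the algebraic group.

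First I would bring $\gamma$ into convenient form. Since $\gamma$ commutes with $F_0$, its image in $\mathrm{Out}(\mathrm{GL}_n(q))$ centralizes that of $F_0$. Using the standard description of $\mathrm{Out}(\mathrm{GL}_n(q))$ (diagonal automorphisms of $\mathrm{GL}_n(q)$ being inner), $\gamma$ is congruent modulo $\Inn(\mathrm{GL}_n(q))$ to a product $\gamma_0$ of a power of $F_0$ with possibly the graph automorphism $g\colon x\mapsto {}^{t}x^{-1}$. Both $F_0$ and $g$ lift to bijective morphisms of the algebraic group $\mathbf{GL}_n(\overline{\F}_p)$ that commute on the nose, so the same holds for $\gamma_0$; any inner perturbation can be absorbed into the eventual choice of extensions without affecting equivariance.

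Second, for each $\chi\in\Irr(\mathrm{GL}_n(q))$ the connectedness of centralizers in $\mathrm{GL}_n$ yields an unambiguous Jordan decomposition $\chi\leftrightarrow(s,\psi)$ with $s\in (\mathrm{GL}_n)^{*F}$ semisimple and $\psi$ a unipotent character of $C_{\mathrm{GL}_n^*}(s)^F$. I would set $\Lambda(\chi)$ to be the product of the canonical extension of the linear character $\hat s$ with a canonical extension of $\psi$ to its stabilizer in $C_{\mathrm{GL}_n^*}(s)^F\rtimes \langle\gamma_0\rangle_\psi$, the latter built from canonical extensions of cuspidal unipotent characters together with the Howlett--Lehrer Harish-Chandra theory in type $\sf A$, following the template already used in \cite{MalleSpathMcKay2,CabanesSpathInductiveMcKayTypeA}. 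Both ingredients are constructed purely from algebraic-group data and so are equivariant under any automorphism of $\mathbf{GL}_n$ commuting with $F_0$.

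Third, I would verify that this $\Lambda$ is $\langle\gamma,F_0\rangle$-equivariant. This follows from the naturality of the Jordan decomposition with respect to any bijective morphism of $\mathbf{GL}_n$ commuting with $F$, combined with the simultaneous $\gamma_0$- and $F_0$-equivariance of the extensions of the linear and unipotent parts. The main obstacle is exhibiting a single choice of extension of each unipotent character that is equivariant under the \emph{joint} action of $\langle\gamma_0,F_0\rangle$ (rather than each separately); this is precisely where the type-$\sf A$ machinery of the cited references is applied as a black box, after which what remains is bookkeeping of how $\langle\gamma,F_0\rangle$ acts on Jordan-decomposition labels.
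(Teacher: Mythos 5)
Your approach is a genuinely different and substantially heavier route than the paper's, and it has gaps in its current form. The paper's proof is a short, elementary argument that avoids Lusztig theory entirely: it reduces to showing that every $\chi\in\Irr(\mathrm{GL}_n(q))$ extends to its stabilizer in $\mathrm{GL}_n(q)\rtimes\langle F_0,\gamma\rangle$, then invokes \cite[4.3.1]{BOnShin} to get an extension $\wt\chi$ of $\chi$ to $\mathrm{GL}_n(q)\rtimes\langle F_0\rangle_\chi$ whose values on $\langle F_0\rangle_\chi$ are all nonzero. By Gallagher, the distinct extensions of $\chi$ then have distinct restrictions to $\langle F_0\rangle_\chi$. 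Since $A:=\langle F_0,\gamma\rangle_\chi$ is abelian, any $a\in A$ centralizes $\langle F_0\rangle_\chi$, hence $\wt\chi^a$ and $\wt\chi$ agree on $\langle F_0\rangle_\chi$, forcing $\wt\chi^a=\wt\chi$. Finally $A/\langle F_0\rangle_\chi$ injects into the cyclic group $\langle F_0,\gamma\rangle/\langle F_0\rangle$, so the $A$-invariant $\wt\chi$ extends to $\mathrm{GL}_n(q)\rtimes A$. That is the whole argument; the abelianness of the outer action and the nonvanishing property of the Shintani extension do all the work.

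Your Jordan-decomposition route has several points that are not clean citations and would need to be worked out. First, the reduction ``replace $\gamma$ by $\gamma_0$ up to $\mathrm{Inn}(\mathrm{GL}_n(q))$ and absorb the inner perturbation'' needs an argument: the semidirect products $\mathrm{GL}_n(q)\rtimes\langle\gamma\rangle$ and $\mathrm{GL}_n(q)\rtimes\langle\gamma_0\rangle$ are isomorphic, but you must check that the isomorphism can be chosen compatibly with the $F_0$-action to preserve equivariance, and the element $g$ with $\gamma=\mathrm{ad}(g)\gamma_0$ need not be $F_0$-fixed. Second, the phrase ``$\Lambda(\chi)$ is the product of the canonical extension of $\hat s$ with a canonical extension of $\psi$'' is imprecise: $\hat s\psi$ is a character of $C_{\mathrm{GL}_n^*}(s)^F$, not of $\mathrm{GL}_n(q)$, and passing from an extension of $\hat s\psi$ on the centralizer side to an extension of $\chi$ on $\mathrm{GL}_n(q)\rtimes\langle\gamma\rangle_\chi$ requires an equivariant Jordan decomposition and an equivariant Harish-Chandra/Deligne--Lusztig parametrization step that you only gesture at. Third, and most importantly, the claim that the cited type-$\sf A$ references supply a \emph{single} choice of extension of each unipotent character of $C_{\mathrm{GL}_n^*}(s)^F$ that is simultaneously equivariant under $\langle\gamma_0,F_0\rangle$ (including a graph automorphism component of $\gamma_0$) is not a direct citation; those papers are phrased for quasi-simple groups of type $\sf A$ rather than for centralizers in $\mathrm{GL}_n$, and extracting the joint equivariance needed here is itself nontrivial. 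So while your strategy may be salvageable, it replaces a two-line abelian-group argument with a long chain of equivariance statements whose verification would considerably exceed the original proof.
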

\begin{proof} It clearly suffices to show that any $\chi\in\Irr(\bt{GL}_n(q))$ extends to its stabilizer in $ \bt{GL}_n(q)\rtimes \spann<F_0,\gamma>$.  By \cite[4.3.1]{BOnShin}, $\chi$ has an extension $\wt\chi$ to $\bt{GL}_n(q)\rtimes \spann<F_0>_\chi$ with $0\notin \wt\chi(\spann<F_0>_\chi)$. This implies that the various extensions of $\chi$ to $\bt{GL}_n(q)\rtimes \spann<F_0>_\chi$ have distinct restrictions to $ \spann<F_0>_\chi$. Let $A:= \spann<F_0, \gamma>_\chi$. Then $A$ is abelian and fixes $\wt \chi$ by what we have said about restrictions to $ \spann<F_0>_\chi$. On the other hand $A/ \spann<F_0>_\chi$ injects into $\spann<F_0, \gamma>/ \spann<F_0>$ hence is cyclic, so that $\wt \chi$ does extend to $\bt{GL}_n(q)\rtimes A$. This completes our proof.
\end{proof}

\begin{proof}[Proof of Proposition \ref{propeps_C}] It is sufficient to prove that there exists an $\epsilon(V_d)\spann<E>$-equivariant extension map \wrt $G_d \lhd G_d \rtimes \epsilon(V_d)$ for every $d\in \bb{D}^+$. 
	For $d=1$ the group $H_1$ is abelian and $[\epsilon(V_d),E]=1$. Hence such a map exists.
	
	Let $d\in\bb{D}_{\geq 2}$. Then $G_d\cong G_I^{a_d}$ for some ($I\in \mathcal{O}_d$) and $G_d\,\epsilon(V_d)\cong (G_I\rtimes\spann<\epsilon(c_I)>)\wr\Sym_{a_d}$ for $I\in\mathcal{O}_d$. For $d\geq 2$ the automorphism of $G_I$ induced by $\epsilon(c_I)$ commutes with $E$ and there exists an $\epsilon(c_I)\spann<E>$-equivariant extension map \wrt $G_I\lhd G_I\spann<\epsilon(c_I)>$ by Lemma \ref{lem_F0invext}.
	
	From the knowledge of the representations of wreath products we know there exists an $\epsilon(V_d)\spann<E>$-equivariant extension map \wrt $G_d \lhd G_d \rtimes \epsilon(V_d)$.
\end{proof}

We can now prove the following.
\begin{proposition} \label{thm_ext_LN_c} There exists an $NE$-equivariant extension map $\Lambda$ \wrt $L\lhd N$ for $\Irr(L)$, such that $\Lambda(\la^t)=\Lambda(\la)^t$ for every $t\in \wt T$ and $\la\in\Irr(L)$ with $\la^t\neq \la$. 
\end{proposition}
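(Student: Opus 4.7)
I will apply Proposition~\ref{cor_tool} with $K=K_0=L$, $M=N$, $E=\langle F_0\rangle$, $\mathbb{K}=\Irr(L)$, and $V$ as in Notation~\ref{not38_C}. Since the $n_j^{(d)}$ satisfy braid relations of type $\mathsf{C}_{a_d}$ by Proposition~\ref{prop56_C}\ref{braidonjd_C}, the intersection $V\cap L$ is generated by the squares $(n_j^{(d)})^2$; by Proposition~\ref{prop56_C}\ref{prop56a_C} these squares are exactly the $\bg{h}_I(-1)$ generating the group $H$ from Lemma~\ref{lem34_C}, so $V\cap L=H\leq\Zent(L)$. Condition~\ref{cor32i} is trivial since $K_0=K$, condition (ii.1) follows from $N=LV$ (Proposition~\ref{prop56_C}(c)) together with the previous observation, (ii.2) is Proposition~\ref{prop_maxextHV_C}, and \ref{cor_toolext} is Proposition~\ref{propeps_C}. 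Proposition~\ref{cor_tool} then produces an $NE$-equivariant extension map $\Lambda$ with respect to $L\lhd N$ for $\Irr(L)$.

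For the additional $\wt T$-equivariance clause, my plan is to specify $\Lambda$ on each $\wt T$-orbit in $\Irr(L)$ by transport. Fix a $\wt T$-transversal $\bb{T}_0\subseteq\Irr(L)$; for $\la\in\bb{T}_0$ take $\Lambda(\la)$ as produced by the application above, and for $t\in\wt T$ with $\la^t\neq\la$ set $\Lambda(\la^t):=\Lambda(\la)^t$. Well-definedness amounts to the inclusion $\wt T_\la\leq\wt T_{\Lambda(\la)}$: indeed, for $t_1,t_2\in\wt T$ with $\la^{t_1}=\la^{t_2}\neq\la$ one has $t_1 t_2^{-1}\in\wt T_\la$, and we need $\Lambda(\la)^{t_1}=\Lambda(\la)^{t_2}$.

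The main obstacle is this $\wt T_\la$-invariance of $\Lambda(\la)$. I expect to verify it by tracking the $\wt T_\la$-action through formula~\eqref{wtcD} in the proof of Proposition~\ref{cor_tool}. First, $\wt T$ is abelian and contains $T\supseteq H$, so the extension $\wt\zeta$ produced by $\Lambda_0$ on $\Irr(H)$ is automatically $\wt T_\la$-stable. Second, $\wt T$ acts by inner automorphisms on each ${\sf GL}$-factor $G_I$ ($I\neq J_{-1}$) of $L$ by Lemma~\ref{lem33_C}(c), so the extension $\mathcal{D}'$ produced by $\Lambda_\epsilon$ is $\wt T_\la$-stable on those factors. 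Third, the only remaining contribution comes from $G_{J_{-1}}\cong {\sf Sp}_{2|J_{-1}|}(q)$, where $\wt T_\la$-equivariant extensions of cuspidal characters can be arranged using the construction from \cite{cabanesspathtypeC}. Combining these ingredients in \eqref{wtcD} gives the desired $\wt T_\la$-invariance of $\Lambda(\la)$.
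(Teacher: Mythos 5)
Your application of Proposition~\ref{cor_tool} to obtain an $NE$-equivariant extension map on $\Irr(L)$, and the subsequent plan to redefine $\Lambda$ by $\wt T$-transport from a transversal, match the overall shape of the paper's proof. However, your proposal has a genuine gap: after redefining $\Lambda$ by transport, you must show that the \emph{new} map is still $NE$-equivariant, and this is the main technical content of the paper's proof. You never address it. Once you set $\Lambda(\la^t):=\Lambda(\la)^t$ for $t\in\wt T$, the $NE$-equivariance you inherited from \ref{cor_tool} on the transversal does not automatically propagate, because $\wt T$ is not contained in $NE$ and need not commute with it. Concretely, writing $\mu=\la^t$ for $\la$ in the transversal and $x\in NE$, checking $\Lambda(\mu^x)=\Lambda(\mu)^x$ requires comparing $\mu^{tx}$ with $\mu^{xt}$, i.e., controlling the commutator $[t,x]$. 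The paper resolves this by establishing the key inclusion $[\wt T,NE]\leq L\Zent(\wt G)$ (using $[\wt T,N]\leq T$ and $[\wt T,E]\leq T\Zent(\wt G)$), and then observing that $L\Zent(\wt G)$ acts trivially on $\Irr(L)$ and on $\Irr(N_\mu)$. This commutator lemma is nowhere in your proposal. Relatedly, for the equivariance argument to work the transversal must be chosen $NE$-stable (the paper uses the one from Proposition~\ref{prop4.10}); your arbitrary $\wt T$-transversal $\bb{T}_0$ is insufficient.

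In contrast, the well-definedness issue you focus on, namely $\wt T_\la\leq\wt T_{\Lambda(\la)}$, is the wrong bottleneck, and your attempt to verify it by tracking $\wt T_\la$ through formula~\eqref{wtcD} is not likely to close cleanly: $\wt T$ does not act compatibly on $V$ or on $H\lhd V$ in a way that would make $\wt\zeta$ and $\cD'$ automatically $\wt T_\la$-stable as you assert, and your appeal to \cite{cabanesspathtypeC} for the $\mathrm{Sp}$-factor is vague. You should replace this discussion with the $[\wt T,NE]\leq L\Zent(\wt G)$ argument, pick the $NE$-stable transversal, and then run the four-step chain $\Lambda(\mu^{tx})=\Lambda(\mu^{xt})=\Lambda(\mu^x)^t=\Lambda(\mu)^{xt}=\Lambda(\mu)^{tx}$ as in the paper.
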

\begin{proof} We check that all the assumptions of Proposition~\ref{cor_tool}  are satisfied with $K_0=K=L$, $M=N$, $V$ as defined in Proposition~\ref{prop56_C} and $\bb{T}$ from Proposition~\ref{prop4.10}. The group theoretic assumptions are clear.
	Proposition~\ref{prop_maxextHV_C} implies that the assumption  \ref{cor_tool}(ii.2) is satisfied while Proposition~\ref{propeps_C} gives  \ref{cor_tool}(iii). We obtain an extension map $\Lambda_0$ for $\bb{T}$ and then deduce an extension map for $\Irr(L)$ by setting 
  $\Lambda(\la^t):=\Lambda(\la)^t$ for every $t\in \wt T$ and $\la\in\mathbb T$ with $\la^t\neq \la$ since $\bb T$ is a $\wt T$-transversal in $\Irr(L)$. To show that $\Lambda$ is $NE$-equivariant, note first that $[\wt T,NE]\leq L\Zent(\wt G)$. This is because $[\wt T,N]\leq [\wt G,\wt G]\cap \wt T\leq T$ and $[\wt T,E]\leq T\Zent(\wt G)$ since $F_0$ acts trivially on $\wt T/T\Zent(\wt G)\leq\wt G/G\Zent(\wt G)$ the latter being of order 2. Now let $x\in NE$, $\la\in\Irr(L)$ and let us check $\Lambda (\la^x)=\Lambda (\la)^x$. We have it when $\la\in \bb T$, so let us assume $\la\in\Irr (L)\setminus \bb T$. Since $\bb T$ is a $\wt T$-transversal in $\Irr(L)$ we have $\la\neq {}^t\la\in \bb T$ for some $t\in\wt T$. Denote $\mu ={}^t\la\in \bb T$. We must prove $\Lambda (\mu ^{tx} )=\Lambda (\mu ^{t} )^x$. The right hand side equals $\Lambda (\mu )^{tx} $ since $\mu^t\neq \mu\in \bb T$. For the left hand side we have seen that $[t,x]\in L\Zent(\wt G)$ hence fixes $\mu$, so $\mu^{tx}=\mu^{xt}\neq \mu^x$ while $\mu^x\in \bb T$. So $$\Lambda (\mu ^{tx} )=\Lambda (\mu ^{xt} )=\Lambda (\mu ^{x} )^t=\Lambda (\mu )^{xt}=\Lambda (\mu )^{tx}$$ the last equality since ${[t,x]}$ acts trivially on $\Irr (N_\mu)$.
\end{proof}

In our checking of the inductive Alperin-McKay conditions via Proposition~\ref{prop:ourconds}, we now have assumption \ref{prop:ourconds}(1) for the transversal whose existence is ensured by Proposition~\ref{prop4.10}. In the following, we turn to assumption \ref{prop:ourconds}(2) which deals with the so-called reflection subgroup $R(\lambda)$ of $W(\lambda):=N_\lambda)/L$ (see \cite[10.6.3]{Carter2}). The group $R(\lambda)$ is seen as acting on $\R \Phi/\R \Phi'$ and generated by reflections $s_\al$ for $\al$ ranging over a certain root system $\Phi_\la$ of $\R \Phi/\R \Phi'$.

\begin{lemma} \label{lem4.15} Let $\la\in\cusp(L)$ and $\wt \la\in\Irr(\wt L_{\la}\mid\la)$. Then $R(\la)\leq W(\wt \la)$. \end{lemma}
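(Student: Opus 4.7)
The plan is to reduce the statement to a vanishing claim for a certain linear character. For $n\in N_\lambda$, both $\wt\lambda^n$ and $\wt\lambda$ extend $\lambda$ to $\wt L_\lambda$ (the latter being normalized by $n$), so Gallagher's theorem supplies a unique $\delta(n)\in\Irr(\wt L_\lambda/L)$ with $\wt\lambda^n=\wt\lambda\cdot\delta(n)$. Since inner automorphisms of $\wt L_\lambda$ act trivially on its characters, $\delta(n)$ depends only on the class $nL\in W(\lambda)$. Moreover $\delta$ is a genuine homomorphism $W(\lambda)\to\Irr(\wt L_\lambda/L)$: the natural cocycle identity $\delta(nn')=\delta(n')\cdot\delta(n)^{n'}$ collapses because $N\leq\bg G$ acts trivially on $\wt L/L\cong\wt{\bg G}/\bg G$ (as $\bg G\lhd\wt{\bg G}$ with central quotient). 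A change of base extension $\wt\lambda\mapsto\wt\lambda\cdot\eta$ with $\eta\in\Irr(\wt L_\lambda/L)$ does not affect $\delta$ for the same reason. Thus $W(\wt\lambda)=\ker\delta$ is independent of the choice of $\wt\lambda$, and the lemma becomes equivalent to $\delta|_{R(\lambda)}=1$.

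Next I would analyze $\delta$ on each generating reflection $s_\alpha\in R(\lambda)$ (with $\alpha\in\Phi_\lambda$). By Howlett--Lehrer theory (see \cite[10.6.3]{Carter2}) one may choose a Tits-type lift $\dot s_\alpha\in N_\lambda\cap\NNN_{\bg G}(\bg T)^F$, coming from the $\mathrm{SL}_2$-subgroup associated to the root $\alpha$. Given any $t\in\wt L_\lambda$, use the factorization $\wt{\bg T}=\bg T\cdot\Zent(\wt{\bg G})$ to write $t=sz$ with $s\in\bg T^F=T$ and $z\in\Zent(\wt{\bg G})^F$; the central element $z$ commutes with $\dot s_\alpha\in\bg G$, so $[t,\dot s_\alpha]=[s,\dot s_\alpha]\in\bg T\cap L=T$. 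Using the Steinberg relations in the spirit of the computations in Section~\ref{sec:ext_C}, this commutator lies in the image of the coroot subgroup associated to $\alpha$ (or to the sum of coroots in the appropriate Weyl orbit, when $\alpha$ is a compound root in the quotient). Hence $\delta(s_\alpha)(tL)=\lambda([s,\dot s_\alpha])$ is the value of $\lambda$ at a coroot element.

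The final step invokes the defining property of $\Phi_\lambda$: the reflection $s_\alpha$ lies in $R(\lambda)$ exactly when the relevant coroot subgroup is contained in $\ker(\lambda)$, so $\lambda([s,\dot s_\alpha])=1$. This yields $\delta(s_\alpha)=1$ for every generator of $R(\lambda)$, and therefore $R(\lambda)\leq\ker\delta=W(\wt\lambda)$. The principal obstacle will be the explicit identification of the coroot subgroup corresponding to $\alpha\in\Phi_\lambda\subseteq\R\Phi/\R\Phi'$ (since $\alpha$ is a root in the quotient, not in $\Phi$ itself) and of the ensuing triviality of $\lambda$ on it; in type $\sf C$ this can be circumvented by using the explicit description of $R(\lambda)$ as a signed-permutation-type group acting on the decomposition $L=\prod_{I\in\mathcal O}G_I$ from Lemma \ref{lem33_C}, and verifying by direct computation the two cases of reflections (transpositions of equal $\mathrm{GL}_d$ factors, or sign-flips corresponding to self-dual cuspidal components).
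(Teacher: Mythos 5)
Your plan takes a genuinely different route from the paper, and it has a real gap at its core step.

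The paper's proof is short and structural: by the Howlett--Lehrer definition (cf.\ \cite[Sect.~10.6]{Carter2}), $s_\al\in R(\la)$ for $\al\in\Phi_\la$ precisely because $R_L^{L_\al}(\la)$ has two constituents of \emph{different degrees}. The paper then uses the compatibility
$\res_{L_\al}^{L_\al\wt L_\la}\circ R^{\wt L_\la L_\al}_{\wt L_\la}(\wt\la)=R_L^{L_\al}(\la)$
(from the BN-pair of $G\wt L_\la$ obtained by intersecting that of $\wt G$) together with Clifford theory for $L_\al\lhd L_\al\wt L_\la$ to conclude that $R^{\wt L_\la L_\al}_{\wt L_\la}(\wt\la)$ also has two constituents of different degrees, which is exactly the criterion for $s_\al\in W(\wt\la)$.

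Your reduction of the lemma to showing that the Gallagher map $\delta\colon W(\la)\to\Irr(\wt L_\la/L)$, $\wt\la^n=\wt\la\cdot\delta(n)$, is trivial on $R(\la)$ is sound: the argument that $\delta$ is a homomorphism (because $N\leq G$ acts trivially on $\wt L_\la/L$) and that $W(\wt\la)=\ker\delta$, independently of the choice of extension $\wt\la$, is correct. However, the decisive step fails. You assert that ``the reflection $s_\al$ lies in $R(\la)$ exactly when the relevant coroot subgroup is contained in $\ker(\la)$.'' That is not the definition of $R(\la)$, and you do not establish it. The Howlett--Lehrer criterion is in terms of the degrees of the two constituents of $R_L^{L_\al}(\la)$, not in terms of $\la$ vanishing on a coroot subgroup of $T$; for a non-linear cuspidal $\la$ there is no such simple coroot-kernel characterization. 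Related to this, the identity $\delta(s_\al)(tL)=\la([s,\dot s_\al])$ implicitly treats $\la$ as scalar-valued on the commutator: one needs $[s,\dot s_\al]$ to lie in $\Zent(L)$ (or at least that a representation affording $\wt\la$ sends it to a scalar) for the expression $\la([s,\dot s_\al])$ to make sense, and that again is not automatic since $\al$ is a root of the quotient $\R\Phi/\R\Phi'$ rather than of $\Phi$ itself. You flag this last point as ``the principal obstacle'' and claim it can be circumvented by a case check in type $\sf C$, but you do not carry it out; and even if you did, it would not repair the unsupported characterization of $R(\la)$. In short: the framing in terms of the Gallagher cocycle is a viable idea, but without a proof of the coroot-kernel criterion the argument does not close, whereas the paper sidesteps all explicit computation by working with the degree criterion that \emph{defines} $R(\la)$ and transporting it along the regular embedding via Clifford theory.
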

\begin{proof} The group $G \wt L_\la =({\bg G} \wt L_\la )^F$ has a split $BN$-pair obtained by intersection with the one of $\wt G$ and standard Levi subgroups correspond. Then $(\wt L_{\la},\wt\la)$ is a cuspidal pair for reasons already seen in (e) of the proof of Proposition~\ref{prop:ht0cusp}. This gives the meaning of $W(\wt\la)$ as a subgroup of $\NNN_{{\bg G}\wt L_\la}({\bg L}\wt L_\la)^F/\wt L_\la =N/L$.
	
	Now to prove our claim, it suffices to check that $s_\al\in W(\wt \la)$ for every $\al\in\Phi_\la$. Recall that for any $\al\in\Phi_\la$, one defines a Levi subgroup $L_\al$ of $G$ as generated by $L$ and the $\bg X^F_\beta$'s for $\beta\in \Phi$ with $\al\in \R\beta+\R\Phi '/\R\Phi '$ (see \cite[p. 330]{Carter2}). By the definition of $\Phi_\la$ the character $R_L^{L_\al}(\la)$ has two constituents of different degrees (see \cite[Sect. 10.6]{Carter2}). Now there exists an extension $\wt \la$ of $\la$ to $\wt L_\la$ since $\wt L/L\cong \wt G/G$ is cyclic. Again by the compatibility of Harish-Chandra induction with regular embeddings and intermediate inclusions, one has
	$$ \res_{L_\al }^{L_\al\wt L_\la} \circ R^{\wt L_\la L_\al }_{\wt L_\la}(\wt \la)= R_{ L}^{L_\al}(\la).$$ 
	Because of $L_\al\lhd \wt L_\la L_\al$, $R_{\wt L_\la}^{\wt L_\la L_\al}(\la)$ must also have two constituents of different degrees by Clifford theory. This implies $s_\al\in W(\wt \la)$.
\end{proof}

We now turn to the condition of Proposition~\ref{prop:ourconds} on the linear character $\delta_{\la,\sigma}$ of $N_{^\sigma\la}$ introduced in \cite[p. 887]{MalleSpathMcKay2} and whose definition is recalled in the proof below.

\begin{proposition}\label{propRlambda}   We have $R(^\sigma\la)\leq \ker(\delta_{\la,\sigma})$ for any $\sigma\in  \Aut(G)$ induced by an element of $\wt T$. \end{proposition}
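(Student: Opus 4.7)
The plan is to split into two cases, writing $\sigma = \mathrm{conj}_t$ for $t \in \wt T$ inducing $\sigma$. When $\la^t \neq \la$, the claim is immediate from Proposition~\ref{thm_ext_LN_c}: the identity $\Lambda(\la^t) = \Lambda(\la)^t$ yields ${}^\sigma\Lambda(\la) = \Lambda({}^\sigma\la)$, so $\delta_{\la,\sigma} = 1$ and the inclusion is vacuous regardless of $R({}^\sigma\la)$.

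For the substantive case $\la^t = \la$, we have $t \in \wt L_\la$, so $\sigma$ acts as an inner automorphism on $\wt L_\la$. I would first fix an extension $\wt\la \in \Irr(\wt L_\la \mid \la)$, which is therefore $\sigma$-invariant. A key structural point is that $[\wt L, N] \leq L$: indeed $[\wt L, N] \leq \wt L \cap [\wt G, \wt G] = \wt L \cap G = L$, since $\wt G/G \cong \wt T/T$ is abelian. It follows that $\wt L_\la/L$ lies in the centre of $\wt L_\la N_\la/L$ and meets $N_\la/L$ trivially, so $\wt L_\la N_\la/L = (\wt L_\la/L) \times (N_\la/L)$. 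This lets me combine $\wt\la$ and $\Lambda(\la)$ into an extension $\wt{\wt\la} \in \Irr(\wt L_\la N_\la \mid \la)$ restricting to $\wt\la$ on $\wt L_\la$ and to $\Lambda(\la)$ on $N_\la$; the $\sigma$-twist $\psi := {}^\sigma\wt{\wt\la}/\wt{\wt\la}$ is then a linear character of $\wt L_\la N_\la/L$ satisfying $\psi|_{\wt L_\la/L} = 1$ and $\psi|_{N_\la/L} = \delta_{\la,\sigma}$.

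Given $w \in R(\la)$, Lemma~\ref{lem4.15} provides $m \in N_\la$ with $mL = w$ and $\wt\la^m = \wt\la$. Setting $Q := \wt L_\la \langle m\rangle$, I observe that $Q$ is $\sigma$-stable and $\sigma$ acts on $Q$ as the inner automorphism by $t \in \wt L_\la \leq Q$, hence fixes every irreducible character of $Q$; moreover $\sigma$ acts trivially on the cyclic quotient $Q/\wt L_\la \cong \langle w \rangle$, again by $[\wt L, N] \leq L$. Applying this $\sigma$-invariance to the extension $\wt{\wt\la}|_Q$ of $\wt\la$ forces $\psi|_Q = 1$, and in particular $\delta_{\la,\sigma}(w) = 1$, as required. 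I expect the main care to be in verifying the direct product decomposition of $\wt L_\la N_\la/L$ and in tracking how the inner-ness of $\sigma$ on $Q$ translates, via the compatibility between $\wt{\wt\la}|_Q$ and a $\sigma$-fixed extension of $\wt\la$ to $Q$, into the triviality of $\psi$ on the $\langle w \rangle$-component.
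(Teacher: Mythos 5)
Your Case~1 ($\la^t\ne\la$) matches the paper exactly. In Case~2 the overall strategy is close to the paper's, but there is a genuine gap at the first substantive step. You assert that $\wt\la\in\Irr(\wt L_\la\mid\la)$ and $\Lambda(\la)\in\Irr(N_\la\mid\la)$ can be combined into one character $\wt{\wt\la}\in\Irr(\wt L_\la N_\la\mid\la)$ restricting to $\wt\la$ on $\wt L_\la$ and to $\Lambda(\la)$ on $N_\la$, on the strength of the direct-product decomposition $\wt L_\la N_\la/L=(\wt L_\la/L)\times(N_\la/L)$. That decomposition is correct but not sufficient. If such a $\wt{\wt\la}$ existed, then since $\wt L_\la\lhd\wt L_\la N_\la$ and $\wt{\wt\la}|_{\wt L_\la}=\wt\la$ is irreducible, Clifford theory would force $\wt\la$ to be $\wt L_\la N_\la$-invariant, i.e.\ $N_{\wt\la}=N_\la$ --- but in general $N_{\wt\la}$ is a proper subgroup of $N_\la$, as the fixed extension $\wt\la$ need not be invariant under all of $N_\la$ (and dually $\Lambda(\la)$ need not be $\wt L_\la$-invariant). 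A toy model of the obstruction: in $D_8$ the non-trivial central character extends to each of the two index-$2$ subgroups whose intersection is the centre $Z$, and $D_8/Z\cong C_2\times C_2$ is a direct product of the corresponding quotients, yet the character does not extend to $D_8$. This is precisely why the paper invokes \cite[4.1(a)]{spath10b} to produce a common extension $\phi$ of $\wh\la$ and $\Lambda(\la)|_{N_{\wh\la}}$ only on the smaller group $N_{\wh\la}\wt L_\la$, where the needed invariance holds by definition of $N_{\wh\la}=N_{\wt\la}$.

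The fix is cheap, because your argument only ever uses the restriction to $Q=\wt L_\la\langle m\rangle$, and your $m$ already satisfies $\wt\la^m=\wt\la$, so $m\in N_{\wt\la}$ and $Q\le\wt L_\la N_{\wt\la}$. Replacing $\wt{\wt\la}$ by the paper's $\phi$ on $\wt L_\la N_{\wt\la}$, inner-ness of $\sigma=\mathrm{conj}_t$ on $Q$ (since $t\in\wt L_\la\le Q$) still gives $\psi|_Q=1$ and hence $\delta_{\la,\sigma}(m)=1$; in fact $\sigma$ is inner on all of $\wt L_\la N_{\wt\la}$, which makes the detour through $Q$ unnecessary and is exactly the paper's argument that $\sigma$ fixes $\phi$ and therefore $\delta_{\la,\sigma}|_{N_{\wt\la}}=1$. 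So once the domain of the combined extension is shrunk from $\wt L_\la N_\la$ to $\wt L_\la N_{\wt\la}$, your proof and the paper's coincide; but the existence claim as you wrote it is false in general.
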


\begin{proof} Thanks to Lemma~\ref{lem4.15}, it suffices to check that $W(\wt\la)\leq \ker(\delta_{\la,\sigma})$ for some $\wt \la\in \Irr(\wt L\mid\la)$. Let us recall the extension map $\Lambda$ \wrt $L\lhd N$ for $\Irr(L)$ from Proposition~\ref{thm_ext_LN_c} so that $\delta_{\la,\sigma}$ is uniquely defined as the linear character of $N_{^\sigma\la}$ satisfying
	$$\delta_{\la,\sigma}{}\,\Lambda(^\sigma\la) ={}^\sigma(\Lambda(\la)).\eqno(7)$$

	 By Proposition~\ref{prop4.10} we know that there exists some $NE$-stable $\widetilde L$-transversal $\mathbb T$ in $\cusp(L)$ and we may assume $\la\in \mathbb T$. Accordingly $(N \wt L )_{ \la}= N_{ \la } \wt L _{\la} $ and $(N \wt L )_{ \wt \la}= N_{ \wh \la } \wt L _{\la}$ where $\wh \la\in\Irr(\wt L_\la\mid \la)$ with $\wh \la^{\wt L}=\wt \la$. Note $N_{\wt \la}=N_{\wh \la}$.
According to \cite[4.1(a)]{spath10b} there exists a unique extension $\phi$ of $\wh \la$ to $N_{\wh \la } \wt L_\la$ with $\phi|_{N_{\wh \la}}=\Lambda(\la)|_{N_{\wh \la}}$. The character $\wt \phi=\phi^{\wt N_{\wt \la}}$ is an extension of $\wt \la$. 	

Assume now that ${}^\sigma\la\neq \la$. Then by Proposition~\ref{thm_ext_LN_c} we have $\Lambda(^\sigma\la) ={}^\sigma\Lambda(\la)$ and therefore (7) implies $\delta_{\la,\si}=1$ which gives our claim. 
So we consider the case where ${}^\sigma\la =\la$. Then our claim is equivalent to the fact that $\Lambda(\la)$ and ${}^\sigma\Lambda(\la)$ have same restriction to $N_{\wt\la}$ thanks to Clifford theory (see \cite[6.17]{isaacs}). Since $\si$ stabilizes $\la$ it also stabilizes $\wt \phi$. We see that $\Lambda(\la)|_{N_{\wt \la}}$ is the unique constituent of  $\wt \phi|_{N_{\wt \la}}$  extending $\la$. 
The character $\Lambda(\la)|_{N_{\wt \la}}$ has to be $\si$-stable and this gives our claim. 
\end{proof}

\section{Proof of \prettyref{thm:iAMtypeC}}

We now finish the proof of \prettyref{thm:iAMtypeC} by an application of Proposition~\ref{prop:ourconds} in the case where $G={\rm Sp}_{2l}(q)\leq\wt G={\rm CSp}_{2l}(q)$ with $l\geq 2$ (ensuring that $G$ is the universal covering of the simple group ${\rm PSp}_{2l}(q)$), $q$ a power of an odd prime $p$ and $\ell$ a prime $\geq 5$, dividing $q-1$. Let $B$ be an $\ell$-block of $G$, which by what has been recalled before of \cite[4.1]{CabanesEnguehard99} contains the irreducible components of $R_L^G(\la)$ for $L$ a Levi subgroup of $G$ as described in Section \ref{sec:ext_C} and some $\la\in\cusp(L)\cap{\cal E}(L,\ell ')$. Then $E$ is the group generated by the automorphism of $\wt G$ consisting in raising the matrix entries to the $p$-th power. 

The existence of the $NE$-stable $\wt L$-transversal ${\Bbb T}\subseteq \cusp(L)$ is implied by Proposition~\ref{prop4.10}. Then assumption (1) of Proposition~\ref{prop:ourconds} for $\Bbb T$ is ensured by Proposition~\ref{thm_ext_LN_c}. Now
Proposition~\ref{propRlambda} gives assumption (2) of Proposition~\ref{prop:ourconds}. 

	   On the other hand, assumption (3) in Proposition~\ref{prop:ourconds} follows from \cite{TaylorSymplectics} or \cite[3.1]{cabanesspathtypeC} thanks to Remark~\ref{starCond}. Finally, assumption (4) in Proposition~\ref{prop:ourconds} also holds for $G$ since $\out(G)\cong C_2\times E$ is abelian in this case.

\section{Acknowledgements}

Part of this work was completed while the authors were in residence at the MSRI in Berkeley, California during the Spring 2018 program
on Group Representation Theory and Applications, supported by the NSF
 Grant DMS-1440140. Part was also completed at the Isaac Newton Institute for Mathematical Sciences during the Spring 2020 program Groups, Representations, and Applications: New Perspectives, supported by EPSRC grant EP/R014604/1.  The authors thank both institutes and the organizers of the programs for
making their stays possible and providing a collaborative and productive work environment.

The second-named author was also supported in part by grants from the Simons Foundation (Award \#351233) and the NSF (Award \# DMS-1801156).  She would also like to thank the Graduate School at Universit{\"a}t Wuppertal for its hospitality during her visits in August 2018 and April 2019 in the framework of the research training group
\emph{GRK 2240: Algebro-Geometric Methods in Algebra, Arithmetic and
Topology},
which is funded by the DFG. 

We thank Gunter Malle for his remarks on an early version of our manuscript. 
\bibliographystyle{alpha}
\bibliography{researchreferences}

\end{document}